\theoremstyle{plain}
\newtheorem{theorem}{Theorem}%[section]
\newtheorem{proposition}[theorem]{Proposition}
\newtheorem{lemma}[theorem]{Lemma}
\newtheorem{corollary}[theorem]{Corollary}
\newtheorem{remark}[theorem]{Remark}
\newcommand{\R}{\mathbb{R}}
\newcommand{\mD}{\mathcal{D}}
\newcommand{\mL}{\mathcal{L}}
\newcommand{\mM}{\mathcal{M}}
\newcommand{\mR}{\mathcal{R}}
\newcommand{\mT}{\mathcal{T}}
\newcommand{\mV}{\mathcal{V}}
\newcommand{\dd}{\, \text{d}}
\newcommand{\be}{\mathbf{e}}
\newcommand{\bff}{\mathbf{f}}
\newcommand{\bg}{\mathbf{g}}
\newcommand{\bp}{\mathbf{p}}
\newcommand{\bv}{\mathbf{v}}
\newcommand{\bw}{\mathbf{w}}
\newcommand{\by}{\mathbf{y}}
\newcommand{\bz}{\mathbf{z}}
\newcommand{\bbH}{\mathbb{H}}
\newcommand{\bbL}{\mathbb{L}}
\DeclareMathOperator{\divv}{div}
\DeclareMathOperator{\Symm}{Sym}
\newenvironment{customthm}[1]
{\innercustomthm}
{\endinnercustomthm}
\newenvironment{customthmm}[1]
{\innercustomthmm}
{\endinnercustomthmm}
\begin{document}

\title{Feedback Stabilization of the Three-Dimensional Navier-Stokes Equations using Generalized Lyapunov Equations}
\author{Tobias Breiten\footnote{Institute of Mathematics, University of Graz, Austria. E-mail: tobias.breiten@uni-graz.at} \quad
Karl Kunisch\footnote{Institute of Mathematics, University of Graz, Austria and RICAM Institute, Austrian Academy of Sciences, Linz, Austria. E-mail: karl.kunisch@uni-graz.at}}
 %\quad
%Laurent Pfeiffer\footnote{Institute of Mathematics, University of Graz, Austria. E-mail: laurent.pfeiffer@uni-graz.at}}

\maketitle

\begin{abstract}
The approximation of the value function associated to a stabilization problem formulated as optimal control problem for the Navier-Stokes equations in dimension three  by means of solutions to generalized Lyapunov equations is proposed and analyzed. The specificity, that the value function is not differentiable on the state space must be overcome. For this purpose a new class of generalized Lyapunov equations is introduced. Existence of unique solutions to these equations is demonstrated. They provide the basis for  feedback operators, which approximate the value function, the optimal states and controls, up to arbitrary order.
 \end{abstract}

{\em Keywords:} 3-D Navier-Stokes equations, generalized Lyapunov equations, optimal control, value function, Taylor expansion, feedback control, stabilization.

{\em AMS Classification:}
35Q35, 49J20, 49N35, 93D05, 93D15.

\section{Introduction}

This work is concerned with feedback stabilization of the 3-D Navier-Stokes equations around a possibly  unstable stationary solution.
The approximation is achieved  by Taylor series-like expansions of the value function associated to an infinite-horizon optimal control problem. A related goal was achieved in  \cite{BreKP19b} for the two-dimensional case. But the approach from \cite{BreKP19b}
 cannot be generalized to the 3-D case since it builds on the differentiability of the value function on the state space. This  is not possible in dimension 3 and thus an independent approach and analysis is required.

Indeed the difficulty that arises is related to the lack of a convenient energy equality for the Navier-Stokes equations in dimension 3. Such an equality is available in dimension 2 and it is the basis for proving the uniqueness of weak  variational solutions of the Navier-Stokes equations with initial data in $\bbL^2(\Omega)$, where $\bbL^2(\Omega)$ denotes square integrable vector-valued  functions over $\Omega$. In dimension 3 we must resort to strong variational solutions with initial data in $\bbH^1(\Omega)$. As a consequence we can expect that the value function associated to optimal control problems of the Navier-Stokes equations in dimension 3 is well-defined and enjoys certain smoothness properties in $\bbH^1(\Omega)$ but not over $\bbL^2(\Omega)$. Having in mind that Taylor expansions to nonlinear operators on a space $X$ involve multilinear forms on product spaces consisting of copies of $X$, it becomes clear that $X= \bbH^1(\Omega)$ is not a convenient space to work with, especially if ultimately,
numerical realizations are desired. For this purpose  $X= \bbL^2(\Omega)$ is significantly more convenient. Here we aim for an approximation of the value function with operators constructed in an  $\bbL^2(\Omega)$-setting, in spite of the fact that the value function is differentiable on $\bbH^1(\Omega)$ only. These operators will be constructed as the solutions to generalized Lyapunov equations.

We next introduce the specific problem of interest. Let $\Omega \subset \mathbb R^3$ denote a bounded domain with $C^{1,1}$ boundary $\Gamma$, and let $\tilde{B}$ denote a control to state operator. We aim at designing a control $u$ such that the solution $(\bz,q)$ to the time-dependent Navier-Stokes equations
\begin{equation}\label{eq:inhom_NSE}
  \begin{aligned}
   \frac{\partial \bz}{\partial t} &= \nu \Delta \bz - (\bz \cdot \nabla)\bz - \nabla q +\boldsymbol{\varphi} + \tilde{B} u &&  \text{in } \Omega \times (0,T), \\
   \divv \bz&= 0 &&  \text{in } \Omega \times (0,T), \\
   \bz &= \boldsymbol{\psi} &&  \text{on } \Gamma \times (0,T), \\
   \bz(0) &= \bar{\bz} + \by_0 &&
  \end{aligned}
\end{equation}
satisfies $\lim\limits_{t\to \infty} \bz(t) = \bar{\bz}$ for perturbations $\by_0$ with $\divv\by_0=0$, which are assumed to be suitably \emph{small}. Here $\bar \bz$  is the velocity component of the solution $(\bar{\bz},\bar{q})$  to the  stationary Navier-Stokes equations
\begin{equation}\label{eq:stat_NSE}
 \begin{aligned}
  -\nu \Delta \bar{\bz} + (\bar{\bz} \cdot \nabla ) \bar{\bz} + \nabla \bar{q}  &= \boldsymbol{\varphi} && \text{in } \Omega, \\
  \divv\bar{\bz} &= 0 && \text{in } \Omega, \\
  \bar{\bz} &=\boldsymbol{\psi} && \text{on } \Gamma,
 \end{aligned}
\end{equation}
for given vector-valued functions $\boldsymbol{\varphi}$ and $\boldsymbol{\psi}$. All regularity assumptions will be specified below.

With the intention of formulating this problem as an infinite-horizon control problem, we define $(\by,p):=(\bz,q)-(\bar{\bz},\bar{q})$ and, instead of \eqref{eq:inhom_NSE}, we focus on the following generalized Navier-Stokes equations
\begin{equation}\label{eq:gen_hom_NSE}
  \begin{aligned}
   \frac{\partial \by}{\partial t} &= \nu \Delta \by - (\by \cdot \nabla)\bar{\bz} - (\bar{\bz} \cdot \nabla)\by-(\by \cdot \nabla)\by - \nabla p + \tilde{B} u &&  \text{in } \Omega \times (0,T), \\
   \divv\by&= 0 &&  \text{in } \Omega \times (0,T), \\
   \by &= 0 &&  \text{on } \Gamma \times (0,T). \\
   \by(0) &= \by_0&&
  \end{aligned}
\end{equation}
Our goal consists in proving  that $\lim\limits_{t\to \infty} (\by(t),p(t))=(0,0)$. To achieve this, we focus on the following problem:
\begin{equation} \label{eq:NLQprob_intro} \tag{$\widetilde{P}$}
\inf_{\begin{subarray}{c} \by \in W_\infty(\mD(A_\lambda),Y) \\ u \in L^2(0,\infty;U) \end{subarray}} \frac{1}{2} \int_0^\infty \| \by \|^2_Y \dd t + \frac{\alpha}{2} \int_0^\infty \| u(t) \|_U^2 \dd t, \quad \text{subject to: } \eqref{eq:gen_hom_NSE},
\end{equation}
where the spaces $\mD(A_\lambda), Y$ and $U$ are classical function spaces related to the Navier-Stokes equations that will be introduced below.
%Feedback stabilization of the Navier-Stokes equations has been and still is an active topic of research. Among the numerous works, we refer to, e.g., \cite{Bar11,BarLT06,Fur01,Fur04,Ray06} and the references therein. For literature concerning open loop optimal control of the Navier-Stokes equations, see, e.g., \cite{HinK01} \tcr{more references to be added}. Different from the latter references, here we are going to analyze higher order feedback laws based on Taylor series expansion of the value function of the infinite-horizon control problem. The approach is based on finite-dimensional considerations that go back to \cite{Alb61,Luk69} and have been used and analyzed in, e.g., \cite{AguK14,BeeTB00,NavK07}. For infinite-dimensional bilinear control problems, a theoretical and numerical analysis has recently been discussed in \cite{BreKP19,BreKP17c,BreKP18}.

Let us mention some references that address problems similar to the one considered here. Regarding feedback control of the (three-dimensional) Navier-Stokes equations, we point to, e.g., \cite{Bad12,BadT11,BarLT06,BarRS11,BarT04,Fur04,Ray07} where different feedback methodologies based on spectral decomposition or Riccati equations have been analyzed. For (local) exact null controllability results of the (linearized) Navier-Stokes equations, see, e.g., \cite{FGIP2011,Ima01}. The idea of approximating the optimal feedback law by utilizing a Taylor series expansion of the minimal value function has its origin in finite-dimensional considerations proposed in \cite{Alb61,Luk69} which, later on, have been picked up in, e.g., \cite{AguK14,NavK07}. For a survey summarizing the approach (and related variants), we refer to \cite{BeeTB00}. One of the first references dealing with polynomial feedback laws of infinite-dimensional control systems can be found in \cite{TheBR10}. For the special class of bilinear control systems, Taylor series expansions have recently been analyzed in detail in \cite{BreKP18,BreKP19}. 
The literature on open loop control of the Navier Stokes equation is quite rich. Topics such as necessary and sufficient optimality conditions, numerical approximation of the optimality systems are well investigated, in general. If one focuses on the work dedicated to time dependent optimal control in three space dimensions the literature is scars, however. Here we mention \cite{PR07,TW06}, for  finite horizon optimal control problems subject to the Navier-Stokes equations. 

The contents of the paper is structured as follows. Section 2 contains the problem statement and function space preliminaries. Differentiability properties of the value function are discussed in Section 3. The subsequent section is devoted to the introduction and analysis of the generalized Lyapunov equations. This leads to the polynomial feedback laws which are described in Section 5. Section 6 contains the error estimates for the value function, the optimal states, and controls. We finish with short conclusions.

\paragraph{Notation.} For Hilbert spaces $V\subset Y$ with dense and compact embedding, we consider the Gelfand triple $V\subset Y \subset  V'$ where $V'$ denotes the topological dual of $V$ with respect to the pivot space $Y$. For vector-valued functions $\bff \in (L^2(\Omega))^3$, we use the notation $\bff\in \mathbb{L}^2(\Omega)$. Elements $\bff \in \mathbb{L}^{2}(\Omega)$ will be denoted in boldface and distinguished from scalar-valued functions $g \in L^2(\Omega)$. Similarly, we use $\mathbb{H}^2(\Omega)$ for the space $(H^2(\Omega))^3$. For a closed, densely defined linear operator $(A,\mD(A))$ in $Y$, its adjoint (again considered as an operator $Y$) will be denoted with $(A^*,\mD(A^*))$. Considering $A$ as a bounded linear operator $A\in \mL(\mD(A),Y)$ its dual $A'\in \mL(Y,[\mD(A)]')$ is uniquely defined. Let us recall that it is the unique extension of the operator $A^* \in \mL(\mD(A^*),Y)$ to an element of $\mL(Y,[\mD(A)]')$.
In fact, we have
\begin{equation*}
\langle A^*p,y \rangle_Y= \langle p,A y \rangle_Y \qquad \text{ for all } p\in \mD(A^*), \text{ and }  y\in \mD(A),
\end{equation*}
 and
\begin{equation*}
\langle A y, p \rangle_{Y} = \langle y, A' p \rangle_{\mD(A),[\mD(A)]'}  \qquad \text{ for all } p\in Y, \text{ and } y\in \mD(A).
\end{equation*}
Since $\mD(A^*)$ is dense  in $Y$, this implies that $A'$ is the unique extension of $A^*$ to an operator in $\mathcal{L}(Y,[\mD(A)]')$. For an infinitesimal generator $A$ of an exponentially stable semigroup  $e^{At}$ on $Y$, we consider the space $W(0,T;\mD(A),Y)$ which we endow with the norm
\begin{align}\label{eq:Wnorm}
\| y \| _{ W(0,T;\mD(A),Y)} := \left( \| Ay \|_{L^2(0,T;Y)}^2 + \|\frac{\mathrm{d}}{\mathrm{d}t}y \| _{L^2(0,T;Y)}^2 \right)^\frac{1}{2} , \ \ y \in W(0,T;\mD(A),Y).
\end{align}
Generally, given $T\in \mathbb R$ and two Hilbert spaces $X\subset Y$, by $W(0,T;X,Y)$ we denote the space
\begin{align*}
W(0,T;X,Y) =\left\{ y \in L^2(0,T;X) \ | \ \frac{\mathrm{d}}{\mathrm{d}t} \in L ^2(0,T;Y) \right\}.
\end{align*}
For $T=\infty$, the space $W(0,T;X,Y)$ will be denoted by $W_\infty(X,Y)$.  For $\delta \geq 0$, we denote by $B_Y(\delta)$ the closed ball in $Y$ with radius $\delta$ and center 0.

For $k \geq 1$, we make use of the following norm:
\begin{equation} \label{eq:NormYk}
\| (v_1,\dots,v_k) \|_{V^k}= \max_{i=1,\dots,k} \| v_i \|_V,
\end{equation}
on the product space $V^k:=V\times \cdots \times V$.
Given a Hilbert space $Z$, we say that $\mathcal{T}\colon V^k \rightarrow Z$ is a bounded multilinear mapping (or bounded multilinear form for $Z= \R$) if for all $i \in \{ 1,\dots,k \}$ and for all $(v_1,\dots,v_{i-1},v_{i+1},\dots,v_k) \in V^{k-1}$, the mapping
$v\in V \mapsto \mathcal{T}(v_1,\dots,v_{i-1},v,v_{i+1},\dots,v_k) \in Z$ is linear and
\begin{equation} \label{eq:OperatorNormTensor}
\| \mathcal{T} \| := \sup_{v \in B_{V^k}(1)} \| \mathcal{T}(v) \|_Z < \infty.
\end{equation}
The set of bounded multilinear mappings on $V^k$ will be denoted by $\mathcal{M}(V^k,Z)$. For all $\mathcal{T} \in \mathcal{M}(V^k,Z)$ and for all $(v_1,\dots,v_k) \in V^k$, we have
\begin{equation*}
\| \mathcal{T}(v_1,\dots,v_k) \|_Z \leq \| \mathcal{T} \| \, \prod_{i=1}^k \| v_i \|_V.
\end{equation*}
%Given a bounded multilinear form $\mathcal{T}$ and $v_2,\dots,v_k \in V^{k-1}$, we denote by $\mathcal{T}(\cdot,v_2,\dots,v_k)$ the bounded linear form $v_1 \in V \mapsto \mathcal{T}(v_1,\dots,v_k) \in \R$. It will often be identified with its Riesz representative. Note that
%\begin{equation} \label{eq:Riesz_identification}
%\| \mathcal{T}(\cdot,v_2,\dots,v_k) \|_Y
%= \sup_{v_1 \in B_V(1)} \mathcal{T}(v_1,\dots,v_k)
%\leq \| \mathcal{T} \| \prod_{i=2}^k \| v_i \|_V.
%\end{equation}
Bounded multilinear mappings $\mathcal{T} \in \mathcal{M}(V^k,Z)$ are said to be symmetric if for all $v_1,\dots,v_k \in V^k$ and for all permutations $\sigma$ of $\{ 1,\dots,k \}$,
\begin{equation*}
\mathcal{T}(v_{\sigma(1)},\dots,v_{\sigma(k)})= \mathcal{T}(v_1,\dots,v_k).
\end{equation*}
Finally, given two multilinear mappings $\mathcal{T}_1 \in \mathcal{M}(V^k,Z)$ and $\mathcal{T}_2 \in \mathcal{M}(V^{\ell},Z)$, we denote by $\mathcal{T}_1 \otimes \mathcal{T}_2 \in \mathcal{M}(V^{k+\ell},Z)$ the bounded multilinear form defined by
\begin{equation*}
\mathcal{T}_1 \otimes \mathcal{T}_2(v_1,\dots,v_{k+ \ell})
= \langle \mathcal{T}_1(v_1,\dots,v_k), \mathcal{T}_2(v_{k+1},\dots,v_{k+\ell}) \rangle_Z.
\end{equation*}

Throughout the manuscript, we use $M$ as a generic constant that might change its value between consecutive lines.

\section{Analytical preliminaries}

\subsection{Function spaces}

Let us briefly summarize the classical functional analytic framework that allows us to consider \eqref{eq:gen_hom_NSE} as an abstract differential equation on the space of solenoidal vector fields. Based on this formulation, we subsequently define our stabilization problem of interest.
%In this section, we formulate system \eqref{eq:gen_hom_NSE} as an abstract Cauchy problem on a suitable Hilbert space and, subsequently, define the stabilization problem of interest.
For more details on the following well-known decomposition, let us refer to, e.g., \cite{Bar11,BarLT06,Fur01,Ray06,Tem79} for details. We introduce the spaces
\begin{align*}
  Y&:=\left\{ \by \in \bbL^2(\Omega)\, |\, \divv\by=0, \by\cdot \vec{n}=0 \text{ on } \Gamma \right\}, \\
  V&:=\left\{ \by \in \bbH_0^1(\Omega)\, | \, \divv\by=0 \right\},
\end{align*}
endowed with the canonical inner products and norms. Note that
%It is well-known that
$Y$ is a closed subspace of $\bbL^2(\Omega)$ which is associated to the orthogonal decomposition
\begin{align}\label{eq:orth_dec_div_free}
  \bbL^2(\Omega) = Y \oplus Y^\perp,
\end{align}
where
\begin{align}\label{eq:Yperp}
  Y^\perp = \left\{\bz =\nabla p \, | \, p \in H^1(\Omega) \right\}.
\end{align}
In this context, we recall the  \textit{Leray projector} $P\colon \bbL^2(\Omega) \to Y$ which orthogonally projects $\bbL^{2}(\Omega)$ onto $Y$.
%Following, e.g., \cite{Bar11}, we define a trilinear form $s$ by
%\begin{align*}
 %s(\bu,\bv,\bw):=\int_{\Omega} \sum_{i,j=1}^n u_i w_j \frac{\partial v_j}{\partial x_i}   \dd x = \langle (\bu \cdot \nabla)\bv,\bw \rangle_{\mathbb{L}^2(\Omega)} , \ \ \forall \bu,\bv \in \mathbb H^2(\Omega)\cap V,\ \forall \bw \in Y
%\end{align*}
Consider the  nonlinear operator $F\colon \mathbb H^2(\Omega)\cap V\to Y$ defined by
\begin{align}\label{eq:nonlin}
 F(\by) = P((\by\cdot \nabla)\by).
\end{align}
Let us further define the bilinear mapping $N(\by,\bz):=P((\by\cdot \nabla)\bz)$ for which we recall the following properties:

\begin{proposition}\label{prop:N_estimates}
Let $\Omega$ be bounded domain of class $C^{1,1}$ in $\mathbb{R}^3$. Then there exists a constant $M$ such that
\begin{itemize}
  \item[(i)] $\|N(\by,\bz)\|_{Y}\le M \|\by\|_{\bbH^2(\Omega)} \| \bz\|_{V} $, \text{for all} $\by \in \bbH^2(\Omega) \cap V, \bz \in V$,
  \item[(ii)]  $\|N(\by,\bz)\|_{Y}\le M \|\by\|_{V} \| \bz\|_{\bbH^2(\Omega)} $, \text{for all} $\by \in V, \bz \in \bbH^2(\Omega)\cap V$,
  \item[(iii)] $\|N(\by,\bz)\|_{V}\le M \|\by\|_{\bbH^2(\Omega)} \| \bz\|_{\bbH^2(\Omega)} $, \text{for all} $\by,\bz \in \bbH^2(\Omega) \cap V$.
\end{itemize}
\end{proposition}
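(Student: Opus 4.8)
The plan is to reduce all three bounds to estimates of $(\by\cdot\nabla)\bz$ in $\bbL^2(\Omega)$, respectively in $\bbH^1(\Omega)$, and then to invoke boundedness properties of the Leray projector $P$ together with the Sobolev embeddings available in dimension three. Recall that $P$ is an orthogonal projection on $\bbL^2(\Omega)$, so $\|Pw\|_Y\le\|w\|_{\bbL^2(\Omega)}$; and since $\Gamma$ is of class $C^{1,1}$, $P$ also maps $\bbH^1(\Omega)$ boundedly into itself. This last fact follows from $H^2$-regularity for the Neumann Laplacian: writing $w=Pw+\nabla p$ with $\Delta p=\divv w$ and $\partial_n p=w\cdot \vec{n}$ on $\Gamma$, one gets $p\in H^2(\Omega)$, hence $\nabla p\in\bbH^1(\Omega)$ and $\|Pw\|_{\bbH^1(\Omega)}\le M\|w\|_{\bbH^1(\Omega)}$. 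The embeddings I will use are $\bbH^2(\Omega)\hookrightarrow\bbL^\infty(\Omega)$ and $\bbH^1(\Omega)\hookrightarrow\bbL^6(\Omega)$ (hence also into $\bbL^4(\Omega)$ and $\bbL^3(\Omega)$), both valid for bounded three-dimensional domains; finally, by Poincaré's inequality both $\|\nabla\bv\|_{\bbL^2(\Omega)}$ and $\|\bv\|_{\bbH^1(\Omega)}$ are equivalent to $\|\bv\|_V$ on $V$.

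For (i), Hölder's inequality gives $\|(\by\cdot\nabla)\bz\|_{\bbL^2(\Omega)}\le\|\by\|_{\bbL^\infty(\Omega)}\,\|\nabla\bz\|_{\bbL^2(\Omega)}\le M\|\by\|_{\bbH^2(\Omega)}\|\bz\|_V$, using $\bbH^2(\Omega)\hookrightarrow\bbL^\infty(\Omega)$ and $\|\nabla\bz\|_{\bbL^2(\Omega)}\le M\|\bz\|_V$; applying $\|P\cdot\|_Y\le\|\cdot\|_{\bbL^2(\Omega)}$ yields (i). For (ii), I would instead split the integrability as $\tfrac12=\tfrac16+\tfrac13$, so that $\|(\by\cdot\nabla)\bz\|_{\bbL^2(\Omega)}\le\|\by\|_{\bbL^6(\Omega)}\,\|\nabla\bz\|_{\bbL^3(\Omega)}$; here $\|\by\|_{\bbL^6(\Omega)}\le M\|\by\|_V$ by $\bbH^1(\Omega)\hookrightarrow\bbL^6(\Omega)$, while $\|\nabla\bz\|_{\bbL^3(\Omega)}\le M\|\nabla\bz\|_{\bbH^1(\Omega)}\le M\|\bz\|_{\bbH^2(\Omega)}$ since $\nabla\bz\in\bbH^1(\Omega)\hookrightarrow\bbL^3(\Omega)$. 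Projecting once more gives (ii).

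For (iii), since $\|\cdot\|_V$ is equivalent to the $\bbH^1(\Omega)$-norm and $P$ is bounded on $\bbH^1(\Omega)$, it suffices to bound $\|(\by\cdot\nabla)\bz\|_{\bbH^1(\Omega)}$. The $\bbL^2$-part is controlled exactly as in (i), giving $\|(\by\cdot\nabla)\bz\|_{\bbL^2(\Omega)}\le M\|\by\|_{\bbH^2(\Omega)}\|\bz\|_{\bbH^2(\Omega)}$. For the first-order part, the product rule gives componentwise $\partial_k\big(\by_j\,\partial_j\bz_i\big)=(\partial_k\by_j)(\partial_j\bz_i)+\by_j\,\partial_k\partial_j\bz_i$. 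The first family of terms is estimated by Hölder with exponents $4,4$ and $\bbH^1(\Omega)\hookrightarrow\bbL^4(\Omega)$: $\|(\partial_k\by_j)(\partial_j\bz_i)\|_{\bbL^2(\Omega)}\le\|\nabla\by\|_{\bbL^4(\Omega)}\|\nabla\bz\|_{\bbL^4(\Omega)}\le M\|\by\|_{\bbH^2(\Omega)}\|\bz\|_{\bbH^2(\Omega)}$. The second family is estimated by $\bbH^2(\Omega)\hookrightarrow\bbL^\infty(\Omega)$: $\|\by_j\,\partial_k\partial_j\bz_i\|_{\bbL^2(\Omega)}\le\|\by\|_{\bbL^\infty(\Omega)}\|\nabla^2\bz\|_{\bbL^2(\Omega)}\le M\|\by\|_{\bbH^2(\Omega)}\|\bz\|_{\bbH^2(\Omega)}$. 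Summing over the indices yields $\|(\by\cdot\nabla)\bz\|_{\bbH^1(\Omega)}\le M\|\by\|_{\bbH^2(\Omega)}\|\bz\|_{\bbH^2(\Omega)}$, hence (iii).

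The only genuinely non-elementary ingredient is the $\bbH^1(\Omega)$-boundedness of the Leray projector, which is precisely where the $C^{1,1}$-regularity of $\Gamma$ is needed (through $H^2$-elliptic regularity for the Neumann problem); everything else reduces to Hölder's inequality combined with the three-dimensional Sobolev embeddings. I would also keep in mind the mild abuse in (iii): there $\|\cdot\|_V$ has to be read as the $\bbH^1(\Omega)$-norm, since $N(\by,\bz)$ lies in $Y\cap\bbH^1(\Omega)$ but need not have vanishing Dirichlet trace.
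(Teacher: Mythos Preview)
Your argument is correct and follows essentially the same route as the paper: both rely on H\"older's inequality combined with the three-dimensional Sobolev embeddings $H^2\hookrightarrow L^\infty$, $H^1\hookrightarrow L^q$ (for $q\le 6$), and $H^2\hookrightarrow W^{1,4}$, together with the $\bbH^1$-boundedness of the Leray projector (for which the paper simply cites a reference, while you sketch the Neumann-regularity proof). Your exponents in (ii) differ slightly from those the paper hints at ($L^6$--$L^3$ versus $L^4$--$L^4$), but both choices are valid, and your remark on reading $\|\cdot\|_V$ as the $\bbH^1$-norm in (iii) is a useful clarification the paper leaves implicit.
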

\begin{proof} The first two properties follow from the standard Sobolev embedding results $H^2(\Omega)\hookrightarrow C(\bar{\Omega}),\, H^1(\Omega) \hookrightarrow L^4(\Omega),$ and $H^2(\Omega)\hookrightarrow W^{1,4}(\Omega)$. For the third one,  in addition we use that $P\in {\mathcal L}({\mathcal\bbH^1(\Omega)})$, \cite[Proposition 4.3.7]{Boy13}. Here the $C^{1,1}$ property of the domain is used.
%
%From \cite[Theorem 2.5]{GruS91}, we know that $P\in \mathcal{L}(\bbH^r(\Omega))$ for all $r\ge 0$. The assertions now follow with the estimates provided in, e.g., \cite[Proposition 1.7]{Bar11}, \cite[Lemma V.1.1]{BoyF12} and \cite{GruS91} as well as the standard Sobolev embedding results $H^2(\Omega)\hookrightarrow C(\bar{\Omega}), H^1(\Omega) \hookrightarrow L^4(\Omega),$ and $H^2(\Omega)\hookrightarrow W^{1,4}(\Omega)$.
\end{proof}
We shall also consider $N$ as a bilinear mapping from $V\times V$ to $V'$, which is defined by
\begin{align}\label{eq:bilform_weak}
  N\colon   V\times  V \to V', \ \  \langle N(\by,\bz),\bw \rangle_{V',V}:=\langle (\by\cdot \nabla)\bz, \bw \rangle_{V',V} .
\end{align}
%Note that this definition is consistent with \eqref{eq:bilform} since for all $\by,\bz \in \bbH^2(\Omega)\cap V $ and $\bw \in Y$ we have
%\begin{align*}
% \langle N(\by,\bz),\bw \rangle_Y = \langle P((\by\cdot \nabla)\bz),\bw \rangle _Y = \langle (\by\cdot \nabla)\bz,P\bw \rangle _Y = \langle (\by\cdot \nabla)\bz,\bw \rangle _Y.
%\end{align*}
We have the following properties, which can again be verified by standard Sobolev embedding results, and the fact that
$\langle (\by\cdot \nabla)\bz, \bw \rangle_{V',V}=\langle (\by\cdot \nabla)\bw, \bz \rangle_{V',V}$, for all $(\by,\bz, \bw)\in V^3$.

\begin{proposition}\label{prop:N_estimates_2}
Let $\Omega$ be a bounded Lipschitz domain in $\mathbb R^3$. Then there exists a constant $M$ such that
\begin{itemize}
  \item[(i)] $\|N(\by,\bz)\|_{V'}\le M \|\by\|_{V} \| \bz\|_{V} $, \text{for all} $\by, \bz \in V$,
  %\item[(v)] $s(\by,\bz,\bw)=-s(\by,\bw,\bz)$, \text{for all} $\by,\bz,\bw \in V$,
  \item[(ii)] $\|N(\by,\bz)\|_{V'}\le M \|\by\|_{\bbH^{2}(\Omega)} \| \bz\|_{Y} $, \text{for all} $\by \in \bbH^2(\Omega)\cap V, \bz \in Y$,
  \item[(iii)] $\|N(\by,\bz)\|_{V'}\le M \|\by\|_{Y} \| \bz\|_{\bbH^{2}(\Omega)} $, \text{for all} $\by\in Y, \bz \in \bbH^2(\Omega)\cap V$.
  \end{itemize}
\end{proposition}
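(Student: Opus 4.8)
The statement to prove is Proposition~\ref{prop:N_estimates_2}, giving three estimates on the trilinear form $N$ considered as a map $V \times V \to V'$. Let me sketch a proof plan.

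\bigskip

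The plan is to estimate $\langle N(\by,\bz),\bw\rangle_{V',V} = \langle(\by\cdot\nabla)\bz,\bw\rangle$ for arbitrary $\bw\in V$ and then take the supremum over $\|\bw\|_V\le 1$, exploiting in each case the antisymmetry identity $\langle(\by\cdot\nabla)\bz,\bw\rangle = -\langle(\by\cdot\nabla)\bw,\bz\rangle$ for divergence-free $\by$ (which is the content of the displayed identity just before the proposition, since $\langle(\by\cdot\nabla)\bz,\bw\rangle=\langle(\by\cdot\nabla)\bw,\bz\rangle$ is a typo or a sign convention; in any case the integration-by-parts identity $\int_\Omega (\by\cdot\nabla)\bz\cdot\bw = -\int_\Omega(\by\cdot\nabla)\bw\cdot\bz$ holds when $\divv\by=0$ and $\by$ or $\bw$ vanishes on $\Gamma$). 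The only analytic inputs are the Sobolev embeddings $\bbH^1(\Omega)\hookrightarrow\bbL^4(\Omega)$, $\bbH^1(\Omega)\hookrightarrow\bbL^6(\Omega)$, $\bbH^2(\Omega)\hookrightarrow L^\infty(\Omega)$ and $\bbH^2(\Omega)\hookrightarrow W^{1,\infty}(\Omega)$ (valid in three dimensions), together with Hölder's inequality.

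\bigskip

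For (i): write $|\langle(\by\cdot\nabla)\bz,\bw\rangle| \le \|\by\|_{\bbL^4}\|\nabla\bz\|_{\bbL^2}\|\bw\|_{\bbL^4}$ by Hölder with exponents $4,2,4$, then bound $\|\by\|_{\bbL^4}\le M\|\by\|_V$ and $\|\bw\|_{\bbL^4}\le M\|\bw\|_V$ by the $\bbH^1\hookrightarrow\bbL^4$ embedding (and $\|\nabla\bz\|_{\bbL^2}\le\|\bz\|_V$); taking the sup over $\|\bw\|_V\le1$ gives the claim. For (ii): here $\bz\in Y$ only, so I cannot put a derivative on $\bz$; instead use the antisymmetry to move the derivative onto $\bw$: $\langle(\by\cdot\nabla)\bz,\bw\rangle = -\langle(\by\cdot\nabla)\bw,\bz\rangle$, then estimate $|\langle(\by\cdot\nabla)\bw,\bz\rangle|\le\|\by\|_{\bbL^\infty}\|\nabla\bw\|_{\bbL^2}\|\bz\|_{\bbL^2}\le M\|\by\|_{\bbH^2}\|\bw\|_V\|\bz\|_Y$ using $\bbH^2\hookrightarrow\bbL^\infty$; sup over $\bw$ finishes it. For (iii): now $\by\in Y$ and $\bz\in\bbH^2\cap V$, so keep the derivative on $\bz$ and note it is controlled in $\bbL^\infty$: $|\langle(\by\cdot\nabla)\bz,\bw\rangle|\le\|\by\|_{\bbL^2}\|\nabla\bz\|_{\bbL^\infty}\|\bw\|_{\bbL^2}\le\|\by\|_Y\,M\|\bz\|_{\bbH^2}\,\|\bw\|_V$, using $\bbH^2\hookrightarrow W^{1,\infty}$ and the continuous embedding $V\hookrightarrow Y$; sup over $\bw$ completes the proof.

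\bigskip

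I do not anticipate a genuine obstacle: everything reduces to a correct bookkeeping of Hölder exponents and the choice, in each case, of which of the three factors $\by,\bz,\bw$ absorbs the gradient. The one point requiring a little care is the rigorous justification of the integration-by-parts/antisymmetry identity in case (ii), where $\bz$ is only in $Y\subset\bbL^2(\Omega)$: this is handled by first establishing the identity for smooth divergence-free fields (or for $\bz\in V$) and then passing to the limit by density of $V$ in $Y$, using that the right-hand side $\langle(\by\cdot\nabla)\bw,\cdot\rangle$ is, for fixed $\by\in\bbH^2\cap V$ and $\bw\in V$, a bounded linear functional on $Y$ (indeed on $\bbL^2(\Omega)$) by the very estimate being proved. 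Alternatively one simply \emph{defines} $N(\by,\cdot)\colon Y\to V'$ by this formula and the estimate is immediate; the statement "which can again be verified by standard Sobolev embedding results" in the excerpt suggests the authors intend exactly this routine argument.
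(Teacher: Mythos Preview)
Your arguments for (i) and (ii) are correct and are exactly what the paper's one-line proof sketch intends (standard Sobolev embeddings together with the antisymmetry identity). However, there is a genuine error in your treatment of (iii): the embedding $\bbH^2(\Omega)\hookrightarrow W^{1,\infty}(\Omega)$ that you invoke is \emph{false} in dimension three. For $\bz\in\bbH^2(\Omega)$ one only has $\nabla\bz\in\bbH^1(\Omega)\hookrightarrow\bbL^6(\Omega)$, not $\bbL^\infty(\Omega)$; the critical Sobolev index $2-3/2=1/2$ falls short of $1$.

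The repair is immediate and keeps the proof at the same level of difficulty. Either (a) use the antisymmetry identity once more, writing $\langle(\by\cdot\nabla)\bz,\bw\rangle=-\langle(\by\cdot\nabla)\bw,\bz\rangle$ and estimating
\[
|\langle(\by\cdot\nabla)\bw,\bz\rangle|\le\|\by\|_{\bbL^2}\|\nabla\bw\|_{\bbL^2}\|\bz\|_{\bbL^\infty}\le M\|\by\|_Y\|\bw\|_V\|\bz\|_{\bbH^2}
\]
via the (valid) embedding $\bbH^2\hookrightarrow\bbL^\infty$; or (b) keep the derivative on $\bz$ but choose a different H\"older split,
\[
|\langle(\by\cdot\nabla)\bz,\bw\rangle|\le\|\by\|_{\bbL^2}\|\nabla\bz\|_{\bbL^3}\|\bw\|_{\bbL^6}\le M\|\by\|_Y\|\bz\|_{\bbH^2}\|\bw\|_V,
\]
using $\bbH^1\hookrightarrow\bbL^6$ and $\bbH^2\hookrightarrow W^{1,3}$ (both valid in three dimensions). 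For option (a), the density argument you already outlined for (ii) carries over: one approximates $\by\in Y$ by smooth divergence-free fields, and the boundary term in the integration by parts vanishes because $\bw\in V$ vanishes on $\Gamma$.
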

Analogous properties can be obtained for the nonlinear operator $F$.

%Again, we collect some well-known estimates for $N$.
%\begin{proposition}\label{prop:N_estimates_2}
%Let $\Omega$ be a smooth (\texttt{Lipschitz?}) open subset of $\mathbb R^3$. It then holds
%\begin{itemize}
  %\item[(i)] $\|N(\by,\bz)\|_{V'}\le M \|\by\|_{V} \| \bz\|_{V} $, \text{for all} $\by, \bz \in V$,
  %%\item[(v)] $s(\by,\bz,\bw)=-s(\by,\bw,\bz)$, \text{for all} $\by,\bz,\bw \in V$,
  %\item[(ii)] $\|N(\by,\bz)\|_{V'}\le M \|\by\|_{\bbH^{2}(\Omega)} \| \bz\|_{Y} $, \text{for all} $\by \in \bbH^2(\Omega)\cap V, \bz \in Y$,
  %\item[(iii)] $\|N(\by,\bz)\|_{V'}\le M \|\by\|_{Y} \| \bz\|_{\bbH^{2}(\Omega)} $, \text{for all} $\by\in Y, \bz \in \bbH^2(\Omega)\cap V$.
  %\end{itemize}
%\end{proposition}

The \emph{Oseen-Operator} is defined by
\begin{align}\label{eq:oseen}
	A_{0} \colon (\mathbb H^2(\Omega)\cap V)\times (\mathbb H^2(\Omega)\cap V) \to Y, \ \ A_{0}(\by,\bz):= N(\by,\bz)+N(\bz,\by).
\end{align}
Given a stationary solution $\bar{\bz} \in V$, we associate with it the \textit{Stokes-Oseen} operator $A$ that is defined as follows
\begin{equation}\label{eq:A_NSE}
  \mD(A) = \bbH^2(\Omega)\cap V, \ \ A\by = P(\nu \Delta \by - (\by \cdot \nabla)\bar{\bz} - (\bar{\bz} \cdot \nabla)\by ).
\end{equation}
Considered as operator in $\bbL^2(\Omega)$ the adjoint $A^*$, again as operator in $\bbL^2(\Omega)$, can be characterized by
\begin{equation}\label{eq:Aadj_NSE}
  \mD(A^*) = \bbH^2(\Omega)\cap V, \ \ A^*\bp = P(\nu \Delta \bp - (\nabla \bar{\bz})^T \bp + (\bar{\bz} \cdot \nabla)\bp ).
\end{equation}
As mentioned before, considering $A$ as an element of $\mathcal{L}(\mD(A),Y)$ its dual $A'\in \mathcal {L}(Y,[\mD(A)]')$ is the unique extension of  the operator $A^* \in \mathcal{L}(\mD(A),Y)$ to an element in $\mathcal{L} (Y,[\mD(A)]')$.

For the  control operator $\tilde{B}$ we assume that $\tilde B\in \mathcal{L}(U,\bbL^2(\Omega))$. Let us set $B:=P\tilde{B} \in \mathcal{L}(U,Y)$. We are now prepared to project the controlled state equation \eqref{eq:gen_hom_NSE}  onto the space of solenoidal vector fields $Y$. We arrive at the abstract control system
\begin{equation}\label{eq:abs_Cauchy}
\begin{aligned}
  \frac{\dd }{\dd t}\by(t) &= A\by - F(\by) + Bu, \ \  \by(0) =\by_0,
\end{aligned}
\end{equation}
where the pressure $p$ is eliminated. Before we state the optimal control problem, we collect some generalizations of the estimates in Proposition \ref{prop:N_estimates} to the time-varying case.

\begin{corollary}\label{cor:N_time_estimates}
%It holds that
Let $\by,\bz \in L^2(0,\infty;\mD(A)),\bv \in L^\infty(0,\infty;V)$. Then
  \begin{align}
   \| N(\by,\bz) \|_{L^1(0,\infty;V)} &\le M \| \by \|_{L^2(0,\infty;\bbH^2(\Omega))} \| \bz \|_{L^2(0,\infty;\bbH^2(\Omega))} , \ \ \label{eq:N_L1} \\
   \| N(\by,\bz) \|_{L^1(0,\infty;V')}& \le M \| \by \|_{L^2(0,\infty;\bbH^2(\Omega))} \| \bz \|_{L^2(0,\infty;Y)} , \ \ \label{eq:N_L1b} \\
   \| N(\by,\bz) \|_{L^1(0,\infty;V')}& \le M \| \by \|_{L^2(0,\infty;Y)} \| \bz \|_{L^2(0,\infty;\bbH^2(\Omega))} , \ \ \label{eq:N_L1c} \\
  \| N(\by,\bv) \|_{L^2(0,\infty;Y)} &\le M \| \by \|_{L^2(0,\infty;\bbH^2(\Omega))} \| \bv \|_{L^\infty(0,\infty;V)}, \ \ \label{eq:N_L2a}   \\
  \| N(\bv,\bz) \|_{L^2(0,\infty;Y)} &\le M  \| \bv \|_{L^\infty(0,\infty;V)} \| \bz \|_{L^2(0,\infty;\bbH^2(\Omega))}. \ \ \label{eq:N_L2b}
  \end{align}
  %Moreover, there exists a constant $C>0$ such that
 %\begin{align}\label{eq:F_Lip}
   %\|F(\by)-F(\bz)\|_{L^2(0,\infty;Y)} \le \delta C\| \by-\bz\|_{W_\infty(\mD(A),Y)}.
 %\end{align}
\end{corollary}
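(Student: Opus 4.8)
The plan is to derive each of the five estimates from the corresponding pointwise-in-time estimate in Proposition \ref{prop:N_estimates} or Proposition \ref{prop:N_estimates_2}, combined with H\"older's inequality in the time variable. Since $N$ is bilinear and continuous between the relevant spaces and $\by,\bz,\bv$ are Bochner measurable on $(0,\infty)$, the scalar functions $t\mapsto \|N(\by(t),\bz(t))\|_V$, $t\mapsto \|N(\by(t),\bz(t))\|_{V'}$, and $t\mapsto \|N(\by(t),\bv(t))\|_Y$ are measurable, so the manipulations below are justified. Throughout I use that $\mD(A)=\bbH^2(\Omega)\cap V$, hence a function in $L^2(0,\infty;\mD(A))$ takes values in $\bbH^2(\Omega)\cap V$ for a.e.\ $t$, and that $\mD(A)\hookrightarrow Y$ trivially.

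For \eqref{eq:N_L1} I would apply Proposition \ref{prop:N_estimates}(iii) at a.e.\ $t$ to get $\|N(\by(t),\bz(t))\|_V\le M\|\by(t)\|_{\bbH^2(\Omega)}\|\bz(t)\|_{\bbH^2(\Omega)}$, integrate over $(0,\infty)$, and apply the Cauchy--Schwarz inequality to the product on the right, which yields $\|N(\by,\bz)\|_{L^1(0,\infty;V)}\le M\|\by\|_{L^2(0,\infty;\bbH^2(\Omega))}\|\bz\|_{L^2(0,\infty;\bbH^2(\Omega))}$. The estimates \eqref{eq:N_L1b} and \eqref{eq:N_L1c} follow in exactly the same way, replacing Proposition \ref{prop:N_estimates}(iii) by Proposition \ref{prop:N_estimates_2}(ii) and (iii) respectively, using $\by(t)\in\bbH^2(\Omega)\cap V$ (resp.\ $\bz(t)\in\bbH^2(\Omega)\cap V$) and the embedding $\mD(A)\hookrightarrow Y$ for the other argument.

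For \eqref{eq:N_L2a} I would apply Proposition \ref{prop:N_estimates}(i) at a.e.\ $t$ to obtain $\|N(\by(t),\bv(t))\|_Y\le M\|\by(t)\|_{\bbH^2(\Omega)}\|\bv(t)\|_V\le M\|\by(t)\|_{\bbH^2(\Omega)}\|\bv\|_{L^\infty(0,\infty;V)}$, then square, integrate over $(0,\infty)$, pull the constant $\|\bv\|_{L^\infty(0,\infty;V)}^2$ out of the integral, and take the square root; this gives $\|N(\by,\bv)\|_{L^2(0,\infty;Y)}\le M\|\by\|_{L^2(0,\infty;\bbH^2(\Omega))}\|\bv\|_{L^\infty(0,\infty;V)}$. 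The estimate \eqref{eq:N_L2b} follows identically from Proposition \ref{prop:N_estimates}(ii).

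No genuine obstacle is expected: the corollary is a routine lifting of the pointwise bilinear estimates to Bochner spaces. The only points requiring a word of care are the measurability of the composed scalar functions and the fact that elements of $L^2(0,\infty;\mD(A))$ have $\bbH^2(\Omega)$-valued (and $Y$-valued) representatives for a.e.\ $t$, both of which are standard.
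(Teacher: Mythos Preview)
Your proposal is correct and is precisely the intended argument: the paper states this result as a corollary without proof, and the five estimates are indeed immediate consequences of the pointwise bounds in Proposition~\ref{prop:N_estimates}(i)--(iii) and Proposition~\ref{prop:N_estimates_2}(ii)--(iii) combined with H\"older's inequality in time, exactly as you describe.
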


\begin{corollary}\label{cor:F_Lip}
 For all $\by,\bz \in L^2(0,\infty;\mD(A)) \cap L^\infty(0,\infty;V)$ with $\max(\|\by \|_{L^\infty(0,\infty;V)}, \| \bz \| _{L^\infty(0,\infty;V)} )\le \delta$, there exists a constant $C>0$ such that
  \begin{align}\label{eq:F_Lip}
   \|F(\by)-F(\bz)\|_{L^2(0,\infty;Y)} \le \delta C\| \by-\bz\|_{L^2(0,\infty;\bbH^2(\Omega))}.
 \end{align}
\end{corollary}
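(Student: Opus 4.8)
Since $F(\by)=N(\by,\by)$ and $N$ is bilinear, the difference telescopes pointwise a.e.\ in $t$ as
\begin{equation*}
F(\by)-F(\bz)=N(\by,\by)-N(\bz,\bz)=N(\by-\bz,\by)+N(\bz,\by-\bz),
\end{equation*}
where by hypothesis $\by(t),\bz(t)\in\mD(A)=\bbH^2(\Omega)\cap V$ and hence also $\by(t)-\bz(t)\in\bbH^2(\Omega)\cap V$ for a.e.\ $t>0$. The whole argument then reduces to a pointwise-in-time estimate followed by integration in $t$.

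First I would bound each summand in $Y$ using Proposition \ref{prop:N_estimates}, assigning in both terms the $\bbH^2(\Omega)$-norm to the factor $\by-\bz$ (this is what matches the right-hand side of \eqref{eq:F_Lip}): part (i) yields $\|N(\by(t)-\bz(t),\by(t))\|_Y\le M\|\by(t)-\bz(t)\|_{\bbH^2(\Omega)}\|\by(t)\|_V$, and part (ii) yields $\|N(\bz(t),\by(t)-\bz(t))\|_Y\le M\|\bz(t)\|_V\|\by(t)-\bz(t)\|_{\bbH^2(\Omega)}$. Invoking the hypotheses $\|\by(t)\|_V\le\delta$ and $\|\bz(t)\|_V\le\delta$ for a.e.\ $t$, these combine to
\begin{equation*}
\|F(\by)(t)-F(\bz)(t)\|_Y\le 2M\delta\,\|\by(t)-\bz(t)\|_{\bbH^2(\Omega)}\qquad\text{for a.e. }t>0.
\end{equation*}
Squaring, integrating over $(0,\infty)$ and taking square roots gives \eqref{eq:F_Lip} with $C=2M$, where $M$ is the constant of Proposition \ref{prop:N_estimates}; in particular $C$ is independent of $\delta$, which is the feature that makes this estimate usable in the later contraction and fixed-point arguments.

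There is no genuine difficulty here; the only point requiring care is the bookkeeping in the decomposition above, namely to put the $\bbH^2(\Omega)$-norm on $\by-\bz$ and the $V$-norm — the one controlled by $\delta$ — on the remaining argument. The opposite assignment would leave an uncontrolled $\bbH^2(\Omega)$-norm of $\by$ or $\bz$ on the right-hand side and fail to reproduce the claimed bound; one could alternatively route the estimate through \eqref{eq:N_L2a}--\eqref{eq:N_L2b} of Corollary \ref{cor:N_time_estimates}, but that would produce an $L^\infty(0,\infty;V)$-norm of $\by-\bz$ rather than the $L^2(0,\infty;\bbH^2(\Omega))$-norm asked for, so the pointwise route via Proposition \ref{prop:N_estimates} is the natural one.
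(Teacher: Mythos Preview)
Your proof is correct and follows the same telescoping decomposition as the paper. The paper's own argument simply invokes \eqref{eq:N_L2a}--\eqref{eq:N_L2b} from Corollary~\ref{cor:N_time_estimates} directly, rather than writing out the pointwise estimate and then integrating; since those two inequalities are nothing but the time-integrated form of Proposition~\ref{prop:N_estimates}(i)--(ii), the two presentations are equivalent.

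One correction to your closing remark: the route through \eqref{eq:N_L2a}--\eqref{eq:N_L2b} does \emph{not} force the $L^\infty(0,\infty;V)$-norm onto $\by-\bz$. In \eqref{eq:N_L2a} the first argument carries the $L^2(0,\infty;\bbH^2(\Omega))$-norm and the second the $L^\infty(0,\infty;V)$-norm, so applying it to $N(\by-\bz,\by)$ with $\by-\bz$ in the first slot gives exactly $M\|\by-\bz\|_{L^2(0,\infty;\bbH^2(\Omega))}\|\by\|_{L^\infty(0,\infty;V)}$; likewise \eqref{eq:N_L2b} applied to $N(\bz,\by-\bz)$ puts the $L^\infty(0,\infty;V)$-norm on $\bz$. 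This is precisely how the paper proceeds, and it yields \eqref{eq:F_Lip} in one line.
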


\begin{proof}
  %Estimates \eqref{eq:N_L2} and \eqref{eq:N_L1} immediately follow from Proposition \ref{prop:N_estimates}. Regarding \eqref{eq:F_Lip},
  Note that
 \begin{align*}
 \| F(\by)-F(\bz)\|_{L^2(0,\infty;Y)} &= \| P(N(\by,\by)-N(\bz,\bz))\|_{L^2(0,\infty;Y)} \\
 &\le C (\|N(\by-\bz,\by)\|_{L^2(0,\infty;Y)}+\| N(\bz,\by-\bz)\|_{L^2(0,\infty;Y)} )
 \end{align*}
 which, together with \eqref{eq:N_L2a} and \eqref{eq:N_L2b}, shows the assertion.
\end{proof}

\subsection{Existence of feasible solutions}

Throughout the article we assume that the following assumptions hold true.
\begin{customthmm}{A1}\label{ass:A1}
 The stationary solution satisfies $\bar{\bz} \in V$.
 \end{customthmm}
 \begin{customthmm}{A2}\label{ass:A2}
 The linearized system $(A,B)$ is exponentially stabilizable, i.e., there exists $K \in \mathcal{L}(Y,U)$ such that the semigroup $e^{(A-BK) t}$ is exponentially stable on $Y$.
\end{customthmm}
Regarding Assumption \ref{ass:A2}, we refer to, e.g.,  \cite{Bar11} where finite-dimensional
feedback operators are constructed on the basis of spectral decomposition as well as Riccati theory.
%In this case \ref{ass:A2} can be satisfied with $U=\mathbb{R}^m$, for $m$ appropriately large.
Alternatively, exponential stabilizability of the linearized system also follows from  exact controllability results available  in
\cite{FGIP2011}.
 %%They imply that  the finite cost criterion holds. We can use classical results,
%see, e.g., \cite{PZ1981} which guarantee the existence of a stabilizing feedback operator.

%Let us discuss some important consequences of the above definitions and assumptions.
We immediately obtain the following important consequences that will be used several times throughout the manuscript.

\begin{customthm}{C1}\label{cons:C1} There exist two constants $\lambda \ge 0$ and $\theta >0$ such that
 %$\exists \lambda \ge 0, \exists \delta >0$:
 \begin{align}\label{eq:VY-coerc}
 \langle \underbrace{(\lambda I-A)}_{:=-A_\lambda}\bv,\bv \rangle_Y \ge \theta \|\bv \|_V^2 \text{ for all } \bv \in V
\end{align}
As is well-known, see e.g., \cite[Theorem II.1.2.12]{Benetal07} estimate \eqref{eq:VY-coerc} implies that $A_\lambda$ and, thus, $A$ generate
analytic semigroups $e^{A_\lambda t}$ and $e^{At}$, on $Y$, respectively.
\end{customthm}

Below we will frequently make use of the spaces $W(0,T;\mD(A_\lambda),Y)$ and $W_\infty(\mD(A_\lambda),Y)$, respectively, endowed with the norm defined in \eqref{eq:Wnorm}. This notation will also be employed for systems associated to operators $A$ that do not necessarily generate themselves  an exponentially stable semigroup on $Y$.

\begin{customthm}{C2}\label{cons:C2}
  For all $(\by_0,\bff) \in V \times \in L^2(0,\infty;Y)$ and $T>0$ the system
\begin{align}\label{eq:auxC2}
 \dot{\by}=A\by + \bff , \quad \by(0)=\by_0,
\end{align}
has a unique solution  $\by \in W(0,T;\mD(A_\lambda),Y)$. In addition, this solution  satisfies
\begin{align*}
 \| \by \|_{W(0,T;\mD(A_\lambda),Y)} \le c(T) (\| \by _0\|_V + \| \bff \| _{L^2(0,\infty;Y)} )
\end{align*}
with a continuous function $c$. In the case  that $\by \in L^2(0,\infty;Y)$, we can replace \eqref{eq:auxC2} by the equivalent formulation
\begin{align*}
 \dot{\by}=\underbrace{(A- \lambda I)}_{A_\lambda} \by + \underbrace{\lambda \by + \bff }_{\bff_\lambda}, \quad \by (0)=\by_0,
\end{align*}
with $\bff_\lambda\in L^2(0,\infty;Y)$. Since $A_\lambda$ satisfies \eqref{eq:VY-coerc}, from, e.g., \cite[Part II, Chapter 1, Section 2]{Benetal07} it follows that it generates an analytic, exponentially stable semigroup on $Y$ and thus by, e.g.,  \cite[Theorem II.1.3.1]{Benetal07} there exists $M_\lambda$ such that $\by \in W_\infty(\mD(A_\lambda),Y)$ with
\begin{align}\label{eq:reg_est_shift}
  \| \by \| _{W_\infty(\mD(A_\lambda),Y)} \le M_{\lambda} ( \| \by _0 \| _V + \| \bff_{\lambda} \| _{L^2(0,\infty;Y)} ).
\end{align}
Here, we use that  $[\mD(A_\lambda),Y]_{\frac{1}{2}}=V,$ see \cite[Theorem II.2.1.3]{Benetal07}.
\end{customthm}
\begin{customthm}{C3}\label{cons:C3}  For all $\by_0\in V$ and $\bff \in L^2(0,\infty;Y)$, the system
\begin{align*}
 \dot{\by}=(A-BK)\by + \bff, \quad \by(0)=\by_0,
\end{align*}
has a unique solution in $W_\infty(\mD(A_\lambda),Y)$, see \cite[Theorem II.1.3.1]{Benetal07}. In particular, it holds that
\begin{align}\label{eq:reg_est_lin}
  \| \by\|_{W_\infty(\mD(A_\lambda),Y)} \le M_K (\| \by_0 \|_V + \| \bff \|_{L^2(0,\infty;Y)} ).
\end{align}
 \end{customthm}

%\begin{proposition}\label{prop:Lipschitz_F}
 %There exists a constant $C>0$ such that for all $\delta \in [0,1]$ and $\by,\bz \in W_\infty(\mD(A),Y)$ with $\|\by\|_{W_\infty(\mD(A),Y)}\le \delta$ and $\|\bz\|_{W_\infty(\mD(A),Y)}\le \delta$, it holds that
 %\begin{align*}
   %\|F(\by)-F(\bz)\|_{L^2(0,\infty;Y)} \le \delta C\| \by-\bz\|_{W_\infty(\mD(A),Y)}.
 %\end{align*}
%\end{proposition}
%\begin{proof}
 %Note that
 %\begin{align*}
 %\| F(\by)-F(\bz)\|_{L^2(0,\infty;Y)} &= \| P(N(\by,\by)-N(\bz,\bz))\|_{L^2(0,\infty;Y)} \\
 %&\le C (\|N(\by-\bz,\by)\|_{L^2(0,\infty;Y)}+\| N(\bz,\by-\bz)\|_{L^2(0,\infty;Y)} )
 %\end{align*}
 %The assertion follows from Proposition \ref{prop:estimates_oseen}(ii)-(iii).
%\end{proof}
In the next lemma, by $A_s$ we denote an abstract generator of an exponentially stable, analytic semigroup on $Y$. The proof of the assertion is based on a classical fixed-point argument which has been used in similar contexts in, e.g., \cite{BreKP19b,Ray06} and is given in the Appendix \ref{sec:appendix_proofs}.

\begin{lemma} \label{lem:non_loc_sol}
  Let $A_s$ generate an exponentially stable, analytic semigroup $e^{A_s t}$ on $Y$, let $C$ denote the constant specified in  Corollary \ref{cor:F_Lip} and  let $F$ be as in \eqref{eq:nonlin}. Then there exists a constant $M_s$ such that for all $(\by_0,\bff) \in  V\times L^2(0,\infty;Y)$ with
  \begin{align*}
\gamma:=    \| \by _0\| _V + \| \bff \| _{L^2(0,\infty;Y)} \le \frac{1}{4CM_s^2}
  \end{align*}
  the system
  \begin{equation}\label{eq:non_loc_sol}
\dot{\by}= A_s\by - F(\by) +\bff, \quad \by(0)= \by_0,
\end{equation}
has a unique solution $\by \in W_\infty(\mD(A_s),Y)$. Moreover, we have the following estimate on $\by$:
\begin{align*}
\| \by \|_{W_\infty(\mD(A_s),Y)}\le  2 M_s \gamma.
\end{align*}
\end{lemma}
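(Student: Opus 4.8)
The plan is to set up a fixed-point iteration in the space $W_\infty(\mD(A_s),Y)$, exploiting the fact that $A_s$ generates an exponentially stable analytic semigroup so that the linear map $(\by_0,\bff)\mapsto \by$ solving $\dot{\by}=A_s\by+\bff$, $\by(0)=\by_0$ is bounded from $V\times L^2(0,\infty;Y)$ into $W_\infty(\mD(A_s),Y)$. Denote by $M_s$ the norm of this solution operator; its existence follows from the same maximal-regularity / interpolation results invoked in Consequences \ref{cons:C2} and \ref{cons:C3} (namely \cite[Theorem II.1.3.1]{Benetal07}), together with $[\mD(A_s),Y]_{1/2}=V$. The key structural point is that elements of $W_\infty(\mD(A_s),Y)$ embed continuously into $L^\infty(0,\infty;V)$ with a constant that can be absorbed into $M_s$, so that the nonlinearity $F$ can be controlled through Corollary \ref{cor:F_Lip}.

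First I would define the map $\mathcal{S}\colon \bv \mapsto \by$, where $\by\in W_\infty(\mD(A_s),Y)$ solves $\dot{\by}=A_s\by - F(\bv)+\bff$, $\by(0)=\by_0$. For this to make sense one needs $F(\bv)\in L^2(0,\infty;Y)$; by Corollary \ref{cor:F_Lip} (with $\bz=0$) together with $\|\bv\|_{L^\infty(0,\infty;V)}\le M_s\|\bv\|_{W_\infty}$, we get $\|F(\bv)\|_{L^2(0,\infty;Y)}\le C M_s\|\bv\|_{W_\infty}^2$. Then I would restrict $\mathcal{S}$ to the closed ball $\mathcal{B}:=\{\bv\in W_\infty(\mD(A_s),Y):\|\bv\|_{W_\infty}\le 2M_s\gamma\}$ and show $\mathcal{S}$ maps $\mathcal{B}$ into itself: for $\bv\in\mathcal{B}$,
\begin{align*}
\|\mathcal{S}(\bv)\|_{W_\infty} \le M_s\bigl(\|\by_0\|_V + \|\bff\|_{L^2(0,\infty;Y)} + \|F(\bv)\|_{L^2(0,\infty;Y)}\bigr) \le M_s\gamma + M_s\cdot C M_s (2M_s\gamma)^2,
\end{align*}
and the smallness hypothesis $\gamma\le \tfrac{1}{4CM_s^2}$ (possibly after enlarging $M_s$ to dominate the embedding constant) is exactly what makes the right-hand side $\le 2M_s\gamma$. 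For the contraction property, given $\bv_1,\bv_2\in\mathcal{B}$, linearity of the solution operator gives $\|\mathcal{S}(\bv_1)-\mathcal{S}(\bv_2)\|_{W_\infty}\le M_s\|F(\bv_1)-F(\bv_2)\|_{L^2(0,\infty;Y)}$, and Corollary \ref{cor:F_Lip} bounds this by $M_s\cdot \delta C\|\bv_1-\bv_2\|_{L^2(0,\infty;\bbH^2(\Omega))}$ with $\delta = 2M_s\gamma$, hence by $2CM_s^2\gamma\,\|\bv_1-\bv_2\|_{W_\infty}\le \tfrac12\|\bv_1-\bv_2\|_{W_\infty}$. Banach's fixed-point theorem then yields a unique fixed point $\by\in\mathcal{B}$, which is the desired solution, and the bound $\|\by\|_{W_\infty(\mD(A_s),Y)}\le 2M_s\gamma$ is automatic from $\by\in\mathcal{B}$. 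Uniqueness in all of $W_\infty(\mD(A_s),Y)$ (not just in $\mathcal{B}$) requires a short separate argument: any two solutions both lie in a slightly larger ball where the contraction estimate still applies, or one argues on a finite time interval $[0,T]$ first and bootstraps.

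The main obstacle I anticipate is not the fixed-point machinery itself but the careful bookkeeping of the two distinct norms in play: the natural estimates from Corollary \ref{cor:F_Lip} are phrased in the $L^2(0,\infty;\bbH^2(\Omega))$ norm, whereas the iteration lives in $W_\infty(\mD(A_s),Y)$, and one must justify $L^2(0,\infty;\mD(A_s))\hookrightarrow L^2(0,\infty;\bbH^2(\Omega))$ and $W_\infty(\mD(A_s),Y)\hookrightarrow L^\infty(0,\infty;V)$ with constants that can be folded into the single generic constant $M_s$. The first embedding holds because $\mD(A_s)=\bbH^2(\Omega)\cap V$ with the graph norm equivalent to the $\bbH^2$ norm (as for $A$ in \eqref{eq:A_NSE}); the second is the standard trace/interpolation embedding for maximal-regularity spaces using $[\mD(A_s),Y]_{1/2}=V$. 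Since the statement is explicitly flagged as a classical fixed-point argument used already in \cite{BreKP19b,Ray06}, I would keep these verifications brief and reference those sources, concentrating the written proof on the self-mapping and contraction inequalities above.
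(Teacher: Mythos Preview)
Your proposal is correct and follows the same contraction-mapping strategy as the paper's proof: define the solution map for the linear problem, restrict to the ball of radius $2M_s\gamma$, and verify self-mapping and contraction via Corollary~\ref{cor:F_Lip}. The one place where the paper proceeds differently is global uniqueness in $W_\infty(\mD(A_s),Y)$. Your first suggestion---that any two solutions lie in a slightly larger ball where the contraction still holds---does not work as stated, since an arbitrary solution carries no a~priori size bound. The paper instead runs a direct Gronwall argument on the difference $\be=\by-\bz$ of two solutions: testing $\dot{\be}=A_s\be - F(\by)+F(\bz)$ against $\be$, using the $V$--$Y$ coercivity of $-A_s$ (the analogue of \eqref{eq:VY-coerc}) together with $\|F(\by)-F(\bz)\|_{V'}\le M(\|\by\|_{\bbH^2}+\|\bz\|_{\bbH^2})\|\be\|_Y$ from Proposition~\ref{prop:N_estimates_2} and Young's inequality, one obtains $\tfrac{d}{dt}\|\be\|_Y^2 \le c(t)\|\be\|_Y^2$ with $c\in L^1_{\mathrm{loc}}(0,\infty)$, and Gronwall gives $\be\equiv 0$. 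Your second suggestion (local-in-time uniqueness plus continuation) would also work but is more writing than the energy estimate.
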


\begin{corollary}\label{cor:feas_control}
There exists a constant $M_K>0$ such that for all $(\by_0,\bff) \in V\times \in L^2(0,\infty;Y)$ which satisfy
 %Let  $C$ and $M_K$ denote the constants from Proposition \ref{prop:Lipschitz_F} and Consequence \ref{cons:C3}. For all $\by_0\in V$ and $\bff\in L^2(0,\infty;Y)$ with
\begin{align*}
 \gamma:=\| \by_0 \|_V + \| \bff \| _{L^2(0,\infty;Y)} \le \frac{1}{4CM_K^2} %\frac{1}{4CM_s^2}
\end{align*}
there exists $u \in L^2(0,\infty;U)$ such that there exists a unique solution $\by \in W_\infty(\mD(A_\lambda),Y)$ to
\begin{equation}\label{eq:loc_stab_non2}
\dot{\by}= A\by + Bu - F(\by) +\bff, \quad \by(0)= \by_0.
\end{equation}
Additionally, it holds that
\begin{align*}
  \|\by\|_{W_\infty(\mD(A_\lambda),Y)} \le 2M_K \gamma  \quad \text{and} \quad \|u\|_{L^2(0,\infty;U)}\le 2 \|K\|_{\mathcal{L}(Y)} M_K \gamma.
\end{align*}
\end{corollary}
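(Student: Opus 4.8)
\textbf{Proof proposal for Corollary \ref{cor:feas_control}.}

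The plan is to reduce this to Lemma \ref{lem:non_loc_sol} by using Assumption \ref{ass:A2} to ``close the loop'' with the feedback $K$. Concretely, I would set $A_s := A - BK$, which by Assumption \ref{ass:A2} generates an exponentially stable semigroup on $Y$, and which is analytic because $BK \in \mathcal{L}(Y)$ is a bounded perturbation of the analytic generator $A$ (Consequence \ref{cons:C1}); alternatively this analyticity/stability is exactly what Consequence \ref{cons:C3} records. Let $M_s = M_K$ be the constant furnished by Lemma \ref{lem:non_loc_sol} applied to this $A_s$ (this is the $M_K$ that appears in the statement of the corollary, and it is consistent with the smallness threshold $\frac{1}{4CM_K^2}$). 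Now, given $(\by_0,\bff)$ with $\gamma = \|\by_0\|_V + \|\bff\|_{L^2(0,\infty;Y)} \le \frac{1}{4CM_K^2}$, Lemma \ref{lem:non_loc_sol} yields a unique $\by \in W_\infty(\mD(A_s),Y)$ solving
\begin{equation*}
\dot{\by} = (A-BK)\by - F(\by) + \bff, \qquad \by(0) = \by_0,
\end{equation*}
with $\|\by\|_{W_\infty(\mD(A_s),Y)} \le 2 M_K \gamma$.

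Next I would define the control by the feedback rule $u := -K\by$. Then $u \in L^2(0,\infty;U)$ with $\|u\|_{L^2(0,\infty;U)} \le \|K\|_{\mathcal{L}(Y)} \|\by\|_{L^2(0,\infty;Y)} \le \|K\|_{\mathcal{L}(Y)} \|\by\|_{W_\infty(\mD(A_s),Y)} \le 2\|K\|_{\mathcal{L}(Y)} M_K \gamma$, which is the second asserted bound. Substituting $Bu = -BK\by$ into \eqref{eq:loc_stab_non2} shows that $(\by,u)$ solves precisely the equation above, so $\by$ is a solution of \eqref{eq:loc_stab_non2} with this $u$. For the first asserted bound I would need $W_\infty(\mD(A_s),Y) = W_\infty(\mD(A_\lambda),Y)$ with equivalent norms; this holds because $\mD(A_s) = \mD(A-BK) = \mD(A) = \mD(A_\lambda) = \bbH^2(\Omega)\cap V$ (adding the bounded operator $BK$ does not change the domain), and on this common domain the graph norms of $A_s$ and $A_\lambda$ are equivalent, again because $BK$ and $\lambda I$ are bounded on $Y$. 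Hence $\|\by\|_{W_\infty(\mD(A_\lambda),Y)} \le M \|\by\|_{W_\infty(\mD(A_s),Y)} \le 2M_K\gamma$, after absorbing the equivalence constant into $M_K$ (the generic-constant convention permits this).

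Finally I would address uniqueness of $\by$ as a solution of \eqref{eq:loc_stab_non2} \emph{for this fixed $u = -K\by$}: since $u$ is determined by $\by$ through the feedback, \eqref{eq:loc_stab_non2} becomes the closed-loop equation $\dot{\by} = (A-BK)\by - F(\by) + \bff$, whose solution in $W_\infty(\mD(A_s),Y)$ is unique by Lemma \ref{lem:non_loc_sol}; this is the sense in which uniqueness is claimed. The main (very mild) obstacle is bookkeeping: verifying that the constant $M_K$ from Lemma \ref{lem:non_loc_sol}, the domain/norm identification $W_\infty(\mD(A_s),Y) = W_\infty(\mD(A_\lambda),Y)$, and the smallness threshold are all mutually consistent, and making sure the analyticity of $A-BK$ is invoked correctly (bounded perturbation of an analytic generator). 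No genuinely new estimate is required beyond what Lemma \ref{lem:non_loc_sol} and Corollary \ref{cor:F_Lip} already provide.
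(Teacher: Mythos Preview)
Your proposal is correct and follows essentially the same approach as the paper: apply Lemma~\ref{lem:non_loc_sol} with $A_s = A-BK$ (exponentially stable and analytic by Assumption~\ref{ass:A2} and Consequence~\ref{cons:C3}), then set $u = -K\by$ to recover \eqref{eq:loc_stab_non2} and the control bound. You spell out more bookkeeping (the domain/norm identification $\mD(A_s)=\mD(A_\lambda)$ and the uniqueness clause) than the paper does, but the argument is the same.
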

\begin{proof}
Since assumption \ref{ass:A2} implies the existence of $K$ such that $e^{(A-BK)t}$ is an exponentially stable analytic semigroup on $Y$, the result is a consequence of Lemma \ref{lem:non_loc_sol} applied to the system
%   Consider the closed-loop system
    \begin{align*}
   \dot{\by}=(A-BK)\by -F(\by)+\bff , \quad \by(0)=\by_0.
  \end{align*}
  The estimate on the control follows from the feedback representation $u=-K\by$.
\end{proof}

For the next statement, we can w.l.o.g.\@ assume that $M_\lambda$ defined in Consequence \ref{cons:C2} satisfies  $M_\lambda \ge \frac{1}{2C}$.

\begin{corollary}\label{cor:more_regularity}
  Let $(\by_0,\bff)\in V \times L^2(0,\infty;Y)$ and $u \in L^2(0,\infty;U)$ be such that there exists a solution $\by \in L^2(0,\infty;Y)$ to
  \begin{align*}
    \dot{\by} = A\by - F(\by) + Bu + \bff, \ \ \by(0)=\by_0.
  \end{align*}
%has a solution $\by \in L^2(0,\infty;Y)$.
If $(\by_0,\bff,u,\by)$ are such that
  \begin{align*}
\gamma :=\| \by_0 \|_V + \| \bff + \lambda \by + Bu \| _{L^2(0,\infty;Y)} \le \frac{1}{4CM_\lambda^2} .
\end{align*}
then $\by \in W_\infty(\mD(A_\lambda),Y)$ and, moreover,
\begin{align*}
  \| \by \| _{W_\infty(\mD(A_\lambda),Y) } \le 2M_{\lambda} \gamma.
  \end{align*}
\end{corollary}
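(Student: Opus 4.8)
The plan is to reduce Corollary \ref{cor:more_regularity} to Corollary \ref{cor:feas_control} (equivalently, to Lemma \ref{lem:non_loc_sol} applied with $A_s = A_\lambda$), by rewriting the equation so that the unbounded part is exactly $A_\lambda$, which already generates an exponentially stable analytic semigroup by virtue of \eqref{eq:VY-coerc}. Since by hypothesis $\by \in L^2(0,\infty;Y)$, I may add and subtract $\lambda \by$ and recast the state equation as
\begin{align*}
 \dot{\by} = A_\lambda \by - F(\by) + \bff_\lambda, \qquad \by(0) = \by_0,
\end{align*}
where $\bff_\lambda := \bff + \lambda \by + Bu \in L^2(0,\infty;Y)$; this is exactly the maneuver already used in Consequence \ref{cons:C2}, and it is legitimate precisely because $\by \in L^2(0,\infty;Y)$ makes $\lambda\by$ an admissible right-hand side.

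Next I would invoke Lemma \ref{lem:non_loc_sol} with $A_s = A_\lambda$ and forcing term $\bff_\lambda$. The smallness requirement of that lemma is
\begin{align*}
 \gamma = \| \by_0 \|_V + \| \bff_\lambda \|_{L^2(0,\infty;Y)} = \| \by_0 \|_V + \| \bff + \lambda\by + Bu \|_{L^2(0,\infty;Y)} \le \frac{1}{4 C M_\lambda^2},
\end{align*}
which is exactly the hypothesis of the corollary (the constant $M_s$ of Lemma \ref{lem:non_loc_sol} for the generator $A_\lambda$ is the $M_\lambda$ of Consequence \ref{cons:C2}, consistent with the normalization $M_\lambda \ge \frac{1}{2C}$ assumed just before the statement). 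Lemma \ref{lem:non_loc_sol} then yields a \emph{unique} solution $\tilde{\by} \in W_\infty(\mD(A_\lambda),Y)$ of this problem together with the bound $\| \tilde{\by} \|_{W_\infty(\mD(A_\lambda),Y)} \le 2 M_\lambda \gamma$. It remains to identify $\tilde{\by}$ with the given $\by$: both are solutions of $\dot{\by} = A_\lambda \by - F(\by) + \bff_\lambda$ with $\by(0) = \by_0$ in the class $L^2(0,\infty;Y)$ (the given $\by$ lies there by assumption, $\tilde{\by}$ does because $W_\infty(\mD(A_\lambda),Y) \hookrightarrow L^2(0,\infty;Y)$), so the uniqueness statement forces $\by = \tilde{\by}$, and the regularity and the estimate transfer to $\by$.

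The one point that deserves a careful sentence rather than a wave of the hand is the uniqueness/identification step: Lemma \ref{lem:non_loc_sol} asserts uniqueness only within $W_\infty(\mD(A_\lambda),Y)$, whereas the given $\by$ is a priori only in $L^2(0,\infty;Y)$. So I must argue that any $L^2(0,\infty;Y)$-solution of the recast equation already belongs to $W_\infty(\mD(A_\lambda),Y)$ under the smallness condition — which is really the content one is trying to prove. The clean way around the apparent circularity is to observe that $\tilde{\by}$ and the given $\by$ have identical initial data and satisfy the same equation, so their difference $\bw := \by - \tilde{\by}$ solves $\dot{\bw} = A_\lambda \bw - (F(\by) - F(\tilde{\by}))$, $\bw(0)=0$; writing this in mild form with the exponentially stable analytic semigroup $e^{A_\lambda t}$, using the maximal-regularity / smoothing estimate of Consequence \ref{cons:C2} together with the Lipschitz bound of Corollary \ref{cor:F_Lip} (valid since both functions have $L^\infty(0,\infty;V)$-norm controlled by $2M_\lambda\gamma \le \delta$ for $\gamma$ small), one gets $\| \bw \|_{W_\infty} \le \delta C M_\lambda \| \bw \|_{L^2(0,\infty;\bbH^2)} \le \tfrac12 \| \bw \|_{W_\infty}$, hence $\bw = 0$. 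This is the only real obstacle; everything else is bookkeeping of constants. I would close by noting that the estimate $\| \by \|_{W_\infty(\mD(A_\lambda),Y)} \le 2 M_\lambda \gamma$ with the stated $\gamma$ is then immediate from the bound on $\tilde{\by}$.
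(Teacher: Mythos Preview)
Your reduction is exactly the paper's: rewrite the equation as $\dot{\by}=A_\lambda\by-F(\by)+\tilde{\bff}$ with $\tilde{\bff}=\bff+\lambda\by+Bu\in L^2(0,\infty;Y)$ and invoke Lemma~\ref{lem:non_loc_sol} with $A_s=A_\lambda$. The paper's own proof is a two-line version of precisely this and does not address the identification step you flag.

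Your extra paragraph on identification, however, is circular. To run the contraction $\|\bw\|_{W_\infty}\le \delta C M_\lambda\|\bw\|_{L^2(0,\infty;\bbH^2)}\le \tfrac12\|\bw\|_{W_\infty}$ you need $\bw=\by-\tilde{\by}\in W_\infty(\mD(A_\lambda),Y)$, hence $\by\in W_\infty(\mD(A_\lambda),Y)$, which is what you are trying to establish; and to apply Corollary~\ref{cor:F_Lip} you also need $\|\by\|_{L^\infty(0,\infty;V)}\le\delta$, whereas the hypothesis only gives $\by\in L^2(0,\infty;Y)$. So the ``careful sentence'' does not actually close the gap it correctly identifies. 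In the paper's only use of this corollary (proof of Lemma~\ref{lem:ex_opt_sol}) the candidate solutions $\by_n$ are already in $W_\infty(\mD(A_\lambda),Y)$, so there the uniqueness of Lemma~\ref{lem:non_loc_sol} (which is proved within $W_\infty$) suffices without any further argument; at the level of the corollary as stated, the identification for a solution that is merely $L^2(0,\infty;Y)$ is simply not supplied---neither by the paper nor by your fix.
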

\begin{proof}
  By assumption it holds that $\by \in L^2(0,\infty;Y)$. As a consequence, we can focus on the equivalent  system
  \begin{align*}
   \dot{\by} = A_\lambda\by- F(\by) +\tilde{\bff},
  \end{align*}
where $\tilde{\bff} = \bff + \lambda \by + Bu$. Application of Lemma \ref{lem:non_loc_sol} then shows the assertion.
\end{proof}

With the previous considerations, we can state problem \eqref{eq:NLQprob_intro} as the following abstract infinite-horizon optimal control problem:
%In the following, we focus on the infinite-horizon optimal control problem:
\begin{equation} \label{eq:NLQprob} \tag{$P$}
\inf_{\begin{subarray}{c} \by \in W_\infty(\mD(A_\lambda),Y) \\ u \in L^2(0,\infty;U) \end{subarray}} J(\by,u), \quad \text{subject to: } e(\by,u)= (0,\by_0)
\end{equation}
where $J \colon W_\infty(\mD(A_\lambda),Y) \times L^2(0,\infty;U) \rightarrow \R$ and $e \colon W_\infty(\mD(A_\lambda),Y) \times L^2(0,\infty;U) \rightarrow L^2(0,\infty;Y) \times V$ are defined by
\begin{align}
J(\by,u)= & \frac{1}{2} \int_0^\infty \| \by \|^2_Y \dd t + \frac{\alpha}{2} \int_0^\infty \| u(t) \|_U^2 \dd t \\
e(\by,u)= & \big( \dot{\by}-(A\by - F(\by) + Bu), \by(0) \big).
\end{align}

\section{Differentiability of the value function on $V$}

In this section, we show the differentiability on $V$ of the associated \emph{value function}, defined by
\begin{align*}
\mV(\by_{0}) = \inf_{\begin{subarray}{c} \by \in W_\infty(\mD(A_\lambda),Y) \\ u \in L^2(0,\infty;U) \end{subarray}} J(\by,u), \quad \text{subject to: } e(\by,u)= (0,\by_0).
\end{align*}
Our arguments are based on an analysis of the dependence of solutions to \eqref{eq:NLQprob} with respect to the initial condition $\by_{0}$.
 %and to show the

%Let us consider the nonlinear infinite-horizon control problem
%\begin{equation} \label{eq:NLQprob_mod} \tag{$\widetilde{P}$}
	%\inf_{u \in L^2(0,\infty;U)} \mathcal{J}(u,\by_0):=  \frac{1}{2}
%\int_0^\infty
	%\|  S(u,\by_0;t)\|^2_Y \dd t + \frac{\alpha}{2}
%\int_0^\infty \|u(t)\|_U^2 \dd t,
%\end{equation}
%where $S(u,\by_0;\cdot)$ is the solution to
%\begin{align*}
  %\dot{\by}(t) &= A\by(t) - F(\by(t)) + Bu(t), \ \ \by(0)=\by_0 \in V.
%\end{align*}
%The purpose of this section is to investigate the sensitivity of solutions to \ref{eq:NLQprob_mod} with respect to perturbations in $\by_{0}$, and in particular to show the differentiability of the \emph{value function} associated with problem \eqref{eq:NLQprob_mod}, defined by
%\begin{align*}
 %\mV(\by_{0}) = \inf_{u \in L^2(0,\infty;U)} \mathcal{J}(u,\by_{0}).
%\end{align*}
%We begin with an analysis of the linearized control problem.

\subsection{Existence of a solution and optimality conditions}

%Let us turn to the original problem \eqref{eq:NLQprob}. We begin by showing existence of an optimal solution $\bar{u}$ with associated state $\bar{\by}(\cdot)=S(u,\by_0;\cdot)$ for small initial data $\by_0$.
This section is devoted to existence of solutions to \eqref{eq:NLQprob} with small initial data and associated first-order necessary optimality conditions.

\begin{lemma}\label{lem:ex_opt_sol}
There exists $\delta_{1} >0 $  such that for all $\by_0 \in B_V(\delta_1)$
% Let $\by_0 \in V$ with $\|\by_0\|_V\le \gamma$. Then
problem \eqref{eq:NLQprob} possesses a solution $(\bar{\by},\bar{u})$.
% Moreover, there exists a constant $M>0$, independent of $\gamma$ such that
Moreover, there exists a constant $M> 0$ independent of $\by_0$ such that
  \begin{align}\label{eq:sol_est_NLQ}
   \max( \| \bar{u} \| _{L^2(0,\infty;U)},\| \bar{\by}\|_{W_\infty(\mathcal{D}(A_\lambda),Y)}) \le M \|\by_0\|_V.
  \end{align}
\end{lemma}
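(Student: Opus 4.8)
The plan is to obtain existence by the direct method of the calculus of variations, while using the nonlinear perturbation results (Corollary~\ref{cor:feas_control} and Corollary~\ref{cor:more_regularity}) both to guarantee that the feasible set is nonempty for small data and to recover the improved regularity $\bar\by\in W_\infty(\mD(A_\lambda),Y)$ of the minimizer. First I would fix $\delta_1>0$ small enough that Corollary~\ref{cor:feas_control} applies with $\bff=0$: for every $\by_0\in B_V(\delta_1)$ there is a feasible pair $(\tilde\by,\tilde u)$ with $\|\tilde\by\|_{W_\infty(\mD(A_\lambda),Y)}+\|\tilde u\|_{L^2(0,\infty;U)}\le M\|\by_0\|_V$. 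This shows the infimum in \eqref{eq:NLQprob} is finite and bounded by $M\|\by_0\|_V^2$, and simultaneously pins down the correct scaling for \eqref{eq:sol_est_NLQ}. From here I would take a minimizing sequence $(\by_n,u_n)$; since $J(\by_n,u_n)$ is bounded, $u_n$ is bounded in $L^2(0,\infty;U)$ and $\by_n$ is bounded in $L^2(0,\infty;Y)$.

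The main obstacle is exactly the one flagged in the introduction: a bound on $\by_n$ in $L^2(0,\infty;Y)$ alone does not control the nonlinearity $F(\by_n)$, so passing to the limit in the state equation requires first upgrading the a priori bound to one in $W_\infty(\mD(A_\lambda),Y)$. The key trick is to keep the minimizing sequence inside the small ball where Corollary~\ref{cor:more_regularity} is valid. Concretely I would shrink $\delta_1$ further so that $M\|\by_0\|_V\le \tfrac{1}{4CM_\lambda^2}$, and, rather than an arbitrary minimizing sequence, use the feasible pair from Corollary~\ref{cor:feas_control} as a competitor to conclude $J(\by_n,u_n)\le M\|\by_0\|_V^2$; this forces $\|u_n\|_{L^2(0,\infty;U)}$ and $\|\by_n\|_{L^2(0,\infty;Y)}$ to be of order $\|\by_0\|_V$. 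Then the quantity $\gamma_n:=\|\by_0\|_V+\|\lambda\by_n+Bu_n\|_{L^2(0,\infty;Y)}$ is also of order $\|\by_0\|_V$, hence $\le \tfrac{1}{4CM_\lambda^2}$ for $\delta_1$ small, and Corollary~\ref{cor:more_regularity} yields $\by_n\in W_\infty(\mD(A_\lambda),Y)$ with $\|\by_n\|_{W_\infty(\mD(A_\lambda),Y)}\le 2M_\lambda\gamma_n\le M\|\by_0\|_V$. (One checks the hypothesis $\by_n\in L^2(0,\infty;Y)$ needed to invoke that corollary is already part of the definition of $W_\infty(\mD(A_\lambda),Y)$-feasibility.)

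With a uniform bound on $(\by_n,u_n)$ in the reflexive space $W_\infty(\mD(A_\lambda),Y)\times L^2(0,\infty;U)$, I would extract a weakly convergent subsequence $(\by_n,u_n)\rightharpoonup(\bar\by,\bar u)$. Weak convergence in $W_\infty(\mD(A_\lambda),Y)$ gives $\dot\by_n\rightharpoonup\dot{\bar\by}$ in $L^2(0,\infty;Y)$, $A\by_n\rightharpoonup A\bar\by$ in $L^2(0,\infty;Y)$, and, on every finite interval $(0,T)$, compactness of the embedding $W(0,T;\mD(A_\lambda),Y)\hookrightarrow L^2(0,T;V)$ (Aubin--Lions, using $V\subset Y$ compact) gives strong convergence $\by_n\to\bar\by$ in $L^2(0,T;V)$ and a.e.; combined with the uniform $L^\infty(0,\infty;V)$ bound this lets me pass to the limit in the quadratic term via the Lipschitz estimate of Corollary~\ref{cor:F_Lip} (applied on $(0,T)$, then letting $T\to\infty$ using the global bounds to control the tail), so $F(\by_n)\to F(\bar\by)$ at least weakly in $L^2(0,\infty;Y)$. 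Hence $e(\bar\by,\bar u)=(0,\by_0)$, i.e.\ the limit is feasible. Finally weak lower semicontinuity of $J$ (it is convex and continuous, being a sum of squared norms) gives $J(\bar\by,\bar u)\le\liminf J(\by_n,u_n)=\mV(\by_0)$, so $(\bar\by,\bar u)$ is optimal; and the estimate \eqref{eq:sol_est_NLQ} follows since $\bar\by,\bar u$ inherit the $O(\|\by_0\|_V)$ bounds either by weak lower semicontinuity of the norms or by one more application of Corollary~\ref{cor:more_regularity} to the optimal pair itself. I expect the delicate bookkeeping to be choosing $\delta_1$ once and for all so that every invocation of the small-data corollaries is legitimate, and handling the $t\to\infty$ tail when upgrading the nonlinear convergence from finite intervals to the half-line.
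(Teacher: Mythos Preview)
Your proposal is correct and follows essentially the same route as the paper: feasibility via Corollary~\ref{cor:feas_control}, a minimizing sequence with $J(\by_n,u_n)\le M\|\by_0\|_V^2$, bootstrapping the $L^2(0,\infty;Y)$ bound on $\by_n$ to a uniform $W_\infty(\mD(A_\lambda),Y)$ bound by invoking Corollary~\ref{cor:more_regularity} (after shrinking $\delta_1$), weak compactness, Aubin--Lions on finite intervals, and weak lower semicontinuity of $J$.

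One small correction: Corollary~\ref{cor:F_Lip} as stated bounds $\|F(\by)-F(\bz)\|_{L^2(Y)}$ by $\delta C\|\by-\bz\|_{L^2(\bbH^2)}$, i.e.\ the $\bbH^2$-norm sits on the \emph{difference}, so strong convergence in $L^2(0,T;V)$ does not feed directly into that estimate. For the limit passage in the nonlinearity you should instead use Proposition~\ref{prop:N_estimates}(i)--(ii), which give $\|N(\by_n-\bar\by,\by_n)\|_Y\le M\|\by_n-\bar\by\|_V\|\by_n\|_{\bbH^2}$ (and symmetrically), so that $\|F(\by_n)-F(\bar\by)\|_{L^1(0,T;Y)}\le M\|\by_n-\bar\by\|_{L^2(0,T;V)}\big(\|\by_n\|_{L^2(0,T;\bbH^2)}+\|\bar\by\|_{L^2(0,T;\bbH^2)}\big)\to 0$; this is exactly what the paper does. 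With that adjustment your argument goes through and no tail control at $t\to\infty$ is actually needed, since testing the state equation against $\bz\in H^1(0,T;Y)$ for each finite $T$ already determines it.
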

\begin{proof}
For now, let us define $\delta_{1} =\frac{1}{4CM_K^2}$ with $C$ as in Corollary \ref{cor:F_Lip} and $M_K$ as in Corollary \ref{cor:feas_control}. By Corollary \ref{cor:feas_control} (with $\bff=0$) there exists a control $u\in L^2(0,\infty;U)$ with associated state $\by$ satisfying
  \begin{align*}
   \max(  \| u \|_{L^2(0,\infty;U)},\|\by \|_{W_\infty(\mathcal{D}(A_\lambda),Y)} ) \le M \| \by_0 \|_V,% \le M \delta_{1},
  \end{align*}
  where $M=2M_K \max(1,\| K\|_{\mathcal{L}(Y)})$.
  Let us now take a minimizing sequence $(\by_n,u_n)_{n\in\mathbb N}$ which we can assume to satisfy  $J(\by_n,u_n)\le M^2\|\by_0\|_V^2(1+\alpha)$. Consequently, for all $n \in \mathbb N$ we obtain
  \begin{align}\label{eq:aux_bound_opt}
  \| \by_n \|_{L^2(0,\infty; Y )} \le M \|\by_0\|_V \sqrt{2(1+\alpha)} \quad \text{and} \quad  \|u_n\| _{L^2(0,\infty;U)} \le M \|\by_0\|_V \frac{\sqrt{2(1+\alpha)}}{\sqrt{\alpha}}.
  \end{align}
   %\left[ 1+ M\sqrt{1+\alpha} \left(\lambda + \frac{\| B \| _{\mathcal{L}(U,Y)}
   %For every $n\in \mathbb N$ let $\by_n(t)=S(u_n,\by_0;t)$. By
   %Possibly after further reduction of $\delta_1$, we eventually obtain that
   After possible reduction of $\delta_1$, we can assume that
  \begin{align*}
   \| \by_0\|_V + \| \lambda \by_n +Bu_n\|_{L^2(0,\infty;Y)} \le \left[1+M\sqrt{2(1+\alpha)} \left(\lambda + \frac{\|B\|_{\mathcal{L}(U,Y)}}{\sqrt{\alpha}} \right) \right]\delta_1 \le  \frac{1}{4CM_\lambda^2},
  \end{align*}
where $M_\lambda$ is as in Consequence \ref{cons:C2} and Corollary \ref{cor:more_regularity}. Hence, the sequence $(\by_n)_{n \in \mathbb N}$ is bounded in $W_\infty(\mD(A_\lambda),Y)$ with $\sup\limits_{n\in \mathbb N}\| \by_n\| \le 2M_\lambda \| \by_0 \|_V$. Extracting if necessary a subsequence, there exists $(\bar{\by},\bar{u})\in W_\infty(\mD(A_\lambda),Y) \times L^2(0,\infty;U)$ such that $(\by_n,u_n) \rightharpoonup (\bar{\by},\bar{u}) \in W_\infty(\mD(A_\lambda,Y) \times L^2(0,\infty;U)$, and $(\bar{\by},\bar{u})$ satisfies \eqref{eq:sol_est_NLQ}.
%   and $\| \by _n \|_{W_\infty(\mD(A),Y)} \le M \gamma$ with a constant $M$ independent of $\gamma$.

We are going to prove that $(\bar{\by},\bar{u})$ is feasible and optimal.
Note that for each $T>0$ and an arbitrary $\bz \in H^{1}(0,T;Y)$, we have for all $n \in \mathbb N$ that
\begin{align}\label{eq:lim_state_eq}
 \int_0^T \left\langle \frac{\mathrm{d}}{\mathrm{d}t} \by_n(t),\bz(t) \right\rangle _{Y}\mathrm{d}t= \int_0^T \langle A\by _n (t) - F(\by_n(t))+B u_n(t),\bz(t) \rangle _{Y}\, \mathrm{d}t.
\end{align}
From the convergence $\frac{\mathrm{d}}{\mathrm{d}t}\by_n \rightharpoonup \frac{\mathrm{d}}{\mathrm{d}t} \bar{\by}$ in $L^2(0,T;Y)$, we can pass to to the limit in the l.h.s.\@ of the above equality. Similarly, using that $A\by_n \rightharpoonup A\bar{\by} \in L^2(0,T;Y)$  it follows that
\begin{align*}
 \int_0^T \langle A\by _n (t), \bz(t) \rangle _{Y}\, \mathrm{d}t \underset{n \to
\infty}{\longrightarrow}\int_0^T \langle A \bar{\by}(t),\bz(t) \rangle_{Y}\, \mathrm{d}t.
\end{align*}
With the same argument, we find that
\begin{align*}
 \int_0^T \langle Bu _n (t), \bz(t) \rangle _{Y}\, \mathrm{d}t \underset{n \to
\infty}{\longrightarrow}\int_0^T \langle B\bar{u}(t),\bz(t) \rangle_{Y}\, \mathrm{d}t.
\end{align*}
From the definition of $F$ and Proposition \ref{prop:N_estimates}(i)-(ii), it follows that
\begin{align*}
&   \left| \int _0^T \langle F(\by_n(t))-F(\bar{\by}(t)),\bz(t) \rangle _{Y} \, \mathrm{d}t \right| \\
&\qquad  \le \int_0^T |\langle N(\by_n(t)-\bar{\by}(t),\by_n),\bz(t) \rangle_{Y}|\, \mathrm{d}t + \int_0^T |\langle N(\bar{\by}(t),\by_n(t)-\bar{\by}(t)),\bz(t) \rangle_{V',V}| \,\mathrm{d}t \\
%\end{align*}
%By Proposition \ref{prop:estimates_oseen}, it holds that
%\begin{align*}
 %&   \left| \int _0^T \langle F(\by_n(t))-F(\bar{\by}(t)),\bz(t) \rangle _{Y} \, \mathrm{d}t \right| \\
 & \qquad \le\int _0^T \| N(\by_n(t)-\bar{\by}(t),\by_n(t))\|_{Y} \|\, \bz(t)\|_Y\, \mathrm{d}t +
 \int _0^T \| N(\bar{\by}(t),\by_n(t)-\bar{\by}(t))\|_{Y} \|\, \bz(t)\|_Y\, \mathrm{d}t \\
 &\qquad \le M\int_0^T \| \by_n(t)-\bar{\by}(t)\|_V \, \| \by_n(t)\|_{\bbH^{2}(\Omega)} \, \| \bz(t)\|_Y \, \mathrm{d}t + M
 \int_0^T \| \bar{\by}(t)\|_{\bbH^{2}(\Omega)}\, \| \by_n(t)-\bar{\by}(t)\|_V \, \| \bz(t) \| _Y \, \mathrm{d}t \\
%\end{align*}
%Since $\by_n,\bar{\by} \in W_\infty(\mD(A),Y)$ and $\bz \in H^{1}(0,T;Y)$, we continue with
%\begin{align*}
 %&   \left| \int _0^T \langle F(\by_n(t))-F(\bar{\by}(t)),\bz(t) \rangle _{Y} \, \mathrm{d}t \right| \\
 &\qquad \le M\left( \| \by_n\|_{L^2(0,T;\mD(A_\lambda))} \, \| \by_n - \bar{\by} \|_{L^2(0,T;V)} \, \| \bz \| _{L^{\infty}(0,T;Y)} +
 \| \bar{\by}\|_{L^2(0,T;\mD(A_\lambda))} \, \| \by_n - \bar{\by} \|_{L^2(0,T;V)} \, \| \bz \| _{L^\infty(0,T;Y)} \right).
\end{align*}
Since $\mD(A_\lambda)$ is compactly embedded in $V$, by the Aubin-Lions lemma it follows that $\| \by_n -\bar{\by}\|_{L^2(0,T;V)} \underset{n \to \infty}{\longrightarrow} 0$. Passing to the limit in \eqref{eq:lim_state_eq} yields
\begin{align*}
\int_0^T \left\langle \frac{\mathrm{d}}{\mathrm{d}t} \bar{\by}(t),\bz(t) \right\rangle _{Y}\mathrm{d}t= \int_0^T \langle A\bar{\by}  (t) - F(\bar{\by}(t))+B \bar{u}(t),\bz(t) \rangle _{Y}\, \mathrm{d}t.
\end{align*}
Since $H^{1}(0,T;Y)$ is dense in $L^{2}(0,T;Y)$, it holds that $e(\bar{\by},\bar{u})=(0,\by_0)$. From weak lower semi-continuity of norms we finally obtain
\begin{align*}
  J(\bar{\by},\bar{u}) \le \underset{n \to \infty}{\liminf} \ J(\by_n,u_n).
\end{align*}
This shows that $(\bar{\by},\bar{u})$ is optimal.

Note that the bound \eqref{eq:sol_est_NLQ} can be shown to hold for arbitrary optimal solutions since they necessarily have to satisfy \eqref{eq:aux_bound_opt} from which we can argue as above.
\end{proof}

\begin{remark}\label{rem:lack_diff_on_Y}
For the previous proof, Corollary \ref{cor:more_regularity} was essential. It is not available in dimension 3 with $\by_0 \in Y$ and $\by \in W_\infty(V,V')$. For this reason, we cannot expect differentiability of $\mV$ on $Y$.
\end{remark}

%For the derivation of the optimality system for \eqref{eq:NLQprob} we need the following technical lemma.

\begin{lemma}\label{lem:contractive_pert}
   Let $G \in \mathcal{L}(W_\infty(\mD(A_\lambda),Y),L^2(0,\infty;Y))$ be such that $\| G\| < \frac{1}{M_K},$ where $\|G\|$ denotes the operator norm of $G$. Then, for all $\bff \in L^2(0,\infty;Y)$ and $\by_0\in V$, there exists a unique solution to the following system:
  \begin{align*}
    \dot{\by} = (A-BK)\by(t) + (G\by)(t) + \bff(t), \ \ \by(0)=\by_0.
  \end{align*}
Moreover,
\begin{align*}
  \| \by \| _{W_\infty(\mD(A_\lambda),Y)} \le \frac{M_K}{1-M_K \| G\|} ( \| \bff \| _{L^2(0,\infty;Y)} + \| \by_0 \| _V).
\end{align*}
\end{lemma}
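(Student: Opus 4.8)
The plan is to recast the perturbed equation as a fixed-point problem for the solution operator of the stabilized linear system $\dot\by = (A-BK)\by + \bg$, and to close it by the Banach fixed-point theorem using the smallness hypothesis $M_K\|G\| < 1$. First I would invoke Consequence~\ref{cons:C3}: since $A-BK$ generates an exponentially stable, analytic semigroup on $Y$ by Assumption~\ref{ass:A2}, for every $(\bg,\by_0) \in L^2(0,\infty;Y) \times V$ the Cauchy problem $\dot\by = (A-BK)\by + \bg$, $\by(0)=\by_0$, has a unique solution $\by =: \mathcal{S}(\bg,\by_0) \in W_\infty(\mD(A_\lambda),Y)$, and the map $\mathcal{S}$ is linear with $\|\mathcal{S}(\bg,\by_0)\|_{W_\infty(\mD(A_\lambda),Y)} \le M_K(\|\by_0\|_V + \|\bg\|_{L^2(0,\infty;Y)})$. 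Because $G \in \mathcal{L}(W_\infty(\mD(A_\lambda),Y),L^2(0,\infty;Y))$, for fixed $\bff$ and $\by_0$ the affine map $\Phi(\by) := \mathcal{S}(G\by + \bff,\by_0)$ sends $W_\infty(\mD(A_\lambda),Y)$ into itself, and $\by$ solves the system in the lemma if and only if $\by = \Phi(\by)$.

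Next I would verify the contraction property. For $\by_1,\by_2 \in W_\infty(\mD(A_\lambda),Y)$, linearity of $\mathcal{S}$ gives $\Phi(\by_1)-\Phi(\by_2) = \mathcal{S}(G(\by_1-\by_2),0)$, so
\[
\|\Phi(\by_1)-\Phi(\by_2)\|_{W_\infty(\mD(A_\lambda),Y)} \le M_K\|G(\by_1-\by_2)\|_{L^2(0,\infty;Y)} \le M_K\|G\|\,\|\by_1-\by_2\|_{W_\infty(\mD(A_\lambda),Y)}.
\]
Since $M_K\|G\| < 1$, $\Phi$ is a contraction on the Banach space $W_\infty(\mD(A_\lambda),Y)$, hence has a unique fixed point, which is the asserted unique solution. (Uniqueness can also be read off directly: the difference of two solutions solves the homogeneous problem with $\by_0=0$, $\bff=0$, and the displayed estimate with $\Phi(0)=0$ forces it to vanish.)

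Finally, the quantitative estimate follows by substituting the fixed point back into the bound for $\mathcal{S}$:
\[
\|\by\|_{W_\infty(\mD(A_\lambda),Y)} = \|\Phi(\by)\|_{W_\infty(\mD(A_\lambda),Y)} \le M_K\bigl(\|\by_0\|_V + \|G\|\,\|\by\|_{W_\infty(\mD(A_\lambda),Y)} + \|\bff\|_{L^2(0,\infty;Y)}\bigr),
\]
and rearranging, using $1 - M_K\|G\| > 0$, yields $\|\by\|_{W_\infty(\mD(A_\lambda),Y)} \le \frac{M_K}{1-M_K\|G\|}\bigl(\|\bff\|_{L^2(0,\infty;Y)} + \|\by_0\|_V\bigr)$. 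I do not expect a genuine obstacle here; the only point needing a word of care is that $\Phi$ really maps $W_\infty(\mD(A_\lambda),Y)$ into itself, i.e.\@ that the regularity supplied by Consequence~\ref{cons:C3} matches the domain of $G$, which is immediate from the statements as given. Equivalently, one could phrase the whole argument as inverting $I - \mathcal{S}(G\,\cdot\,,0)$ on $W_\infty(\mD(A_\lambda),Y)$ via a Neumann series, obtaining the same conclusion and bound.
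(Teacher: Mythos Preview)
Your argument is correct and is precisely the standard contraction/Neumann-series argument that the paper has in mind; the paper itself does not spell out a proof but simply cites \cite[Lemma~2.5]{BreKP18}, whose argument is the one you wrote down. Nothing further is needed.
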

\begin{proof}
The assertion is a variant of \cite[Lemma 2.5]{BreKP18} and follows by the arguments provided in the latter reference.
\end{proof}

\begin{proposition} \label{proposition:optiCondWeak}
% There exists $\delta_1 > 0$ such that, if for $\by_0 \in V$, Problem \eqref{eq:NLQprob} has a solution $\bar{u}$ such that
% $\| \bar{u} \|_{L^2(0,\infty;U)} \leq \delta_1$ and
% $\| \bar{\by} \|_{L^2(0,\infty;Y)} \leq \delta_1$,
There exists $\delta_{2} \in (0,\delta_1]$ such that for all $\by_0\in B_V(\delta_2)$, and for all solutions $(\bar{\by},\bar{u})$ of  \eqref{eq:NLQprob}, there exists a unique costate $\bp \in W_\infty(Y,[\mD(A_\lambda)]')$ satisfying
\begin{align} \label{eq:costate_NLQ}
-\dot{\bp} - A'\bp - P((\bar{\by}\cdot \nabla)\bp-(\nabla \bar{\by})^T \bp)&=\bar{\by}  \quad \text{(in $L^2(0,\infty;[\mD(A_\lambda)]')$)}, \\
\label{eq:control_NLQ}
\alpha \bar{u} + B^* \bp = \ & 0.
\end{align}
Moreover, there exists a constant $M>0$, independent of $(\bar{\by},\bar{u})$, such that
%\begin{equation} \label{eq:estimCostate}
%\| \bp \|_{L^2(0,\infty;Y)} \leq M( \| \bar{\by} \|_{L^2(0,\infty;Y)} + \| \bar{u} \|_{L^2(0,\infty;U)}),
%\end{equation}
%and
%\begin{equation} \label{eq:estimCostate2}
%\| \bp \|_{W_\infty(Y,[\mD(A)]')}   \le M\left(\|\bar{\by} \|_{L^2(0,\infty;Y)}+ \alpha \|\bar{u}\|_{L^2(0,\infty;U)}  \right) \left(1 + \delta_{2}\right) .
%\end{equation}
\begin{equation} \label{eq:estim_costate_NLQ}
\| \bp \|_{W_\infty(Y,[\mD(A_\lambda)]')}   \le M\left(\|\bar{\by} \|_{L^2(0,\infty;Y)}+ \alpha \|\bar{u}\|_{L^2(0,\infty;U)}  \right) .
\end{equation}
\end{proposition}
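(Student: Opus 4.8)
The plan is to derive the optimality system via the Lagrange multiplier theorem applied to the constrained problem \eqref{eq:NLQprob}. First I would check the hypotheses: $J$ and $e$ are $C^1$ (indeed $C^\infty$) between the indicated Hilbert spaces — the only nontrivial term is $F$, whose derivative $F'(\bar\by)\bw = N(\bar\by,\bw)+N(\bw,\bar\by)$ is bounded from $W_\infty(\mD(A_\lambda),Y)$ into $L^2(0,\infty;Y)$ by Corollary \ref{cor:N_time_estimates} (estimates \eqref{eq:N_L2a}--\eqref{eq:N_L2b}, using $W_\infty(\mD(A_\lambda),Y)\hookrightarrow L^\infty(0,\infty;V)$ via the interpolation $[\mD(A_\lambda),Y]_{1/2}=V$). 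The crucial point is surjectivity of $e'(\bar\by,\bar u)\colon W_\infty(\mD(A_\lambda),Y)\times L^2(0,\infty;U)\to L^2(0,\infty;Y)\times V$, i.e. that for every $(\bff,\by_1)$ the linearized equation
\begin{align*}
\dot\bw = A\bw - F'(\bar\by)\bw + B\bu + \bff, \qquad \bw(0)=\by_1,
\end{align*}
is solvable in $W_\infty(\mD(A_\lambda),Y)$. This I would obtain from Lemma \ref{lem:contractive_pert} with $G = -F'(\bar\by)+BK$ applied with $\bu = -K\bw$: the operator norm of $F'(\bar\by)$ as a map $W_\infty(\mD(A_\lambda),Y)\to L^2(0,\infty;Y)$ is bounded by $M\|\bar\by\|_{L^\infty(0,\infty;V)}\le M'\|\by_0\|_V$ by \eqref{eq:sol_est_NLQ}, so choosing $\delta_2\le\delta_1$ small enough forces $\|G\|<1/M_K$. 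Surjectivity of $e'$ then follows, and the multiplier theorem yields a Lagrange multiplier which, after identifying the dual pairings, is exactly the costate $\bp$.

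Next I would read off the adjoint equation. The multiplier $\bp$ lives in the dual of the range space $L^2(0,\infty;Y)\times V$, and the stationarity condition $D_\by\mathcal L=0$ gives, for all $\bw\in W_\infty(\mD(A_\lambda),Y)$,
\begin{align*}
\int_0^\infty \langle \bar\by,\bw\rangle_Y\dd t = \langle \bp, \dot\bw - A\bw + F'(\bar\by)\bw\rangle + \langle \bp_0,\bw(0)\rangle,
\end{align*}
and integrating by parts in time and transposing $A$ and $F'(\bar\by)$ produces the strong form \eqref{eq:costate_NLQ} in $L^2(0,\infty;[\mD(A_\lambda)]')$, with the terminal/decay condition absorbed because $\bp\in W_\infty(Y,[\mD(A_\lambda)]')$ (so $\bp(t)\to 0$); the adjoint of $N(\bar\by,\cdot)+N(\cdot,\bar\by)$ in the relevant pairing is precisely $P((\bar\by\cdot\nabla)\bp - (\nabla\bar\by)^T\bp)$, using the identity $\langle(\by\cdot\nabla)\bz,\bw\rangle=\langle(\by\cdot\nabla)\bw,\bz\rangle$ quoted before Proposition \ref{prop:N_estimates_2}. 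The condition $D_u\mathcal L=0$ immediately gives \eqref{eq:control_NLQ}: $\alpha\bar u + B^*\bp = 0$.

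To get the regularity $\bp\in W_\infty(Y,[\mD(A_\lambda)]')$ and the a priori bound \eqref{eq:estim_costate_NLQ}, I would treat the adjoint equation as a linear evolution equation for $\bp$ run backward in time, governed by the generator $A^*$ (whose shift $A^*_\lambda$ also generates an exponentially stable analytic semigroup, being the adjoint of $A_\lambda$) perturbed by the bounded-in-the-right-spaces term $\bp\mapsto P((\bar\by\cdot\nabla)\bp-(\nabla\bar\by)^T\bp)$ and the source $\bar\by\in L^2(0,\infty;Y)$. Mapping $\bp$ down one level of regularity: the term $P((\bar\by\cdot\nabla)\bp - (\nabla\bar\by)^T\bp)$ is controlled in $L^1(0,\infty;[\mD(A_\lambda)]')$ or $L^2$ by Proposition \ref{prop:N_estimates_2}(ii)--(iii) / Corollary \ref{cor:N_time_estimates} with $\bp\in L^2(0,\infty;Y)$ and $\bar\by\in L^2(0,\infty;\bbH^2(\Omega))$, and the smallness $\|\bar\by\|_{L^\infty(0,\infty;V)}\le M\|\by_0\|_V$ (shrinking $\delta_2$ again) makes the corresponding perturbation operator a strict contraction in the fixed-point space $W_\infty(Y,[\mD(A_\lambda)]')$ — this is the analogue of Lemma \ref{lem:non_loc_sol}/Lemma \ref{lem:contractive_pert} for the adjoint. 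The fixed-point estimate then yields uniqueness of $\bp$ and the bound $\|\bp\|_{W_\infty(Y,[\mD(A_\lambda)]')}\le M\|\bar\by\|_{L^2(0,\infty;Y)}$; the term $\alpha\|\bar u\|$ in \eqref{eq:estim_costate_NLQ} comes for free since $\alpha\bar u = -B^*\bp$ gives $\alpha\|\bar u\|_{L^2(0,\infty;U)}\le\|B^*\|\,\|\bp\|$, so it can be added to the right-hand side harmlessly.

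The main obstacle I anticipate is the surjectivity of $e'(\bar\by,\bar u)$ together with making sure all constants (the perturbation bound $\|F'(\bar\by)\|$, and later the adjoint-perturbation bound) are uniformly small as $\|\by_0\|_V\to 0$; this forces the repeated shrinking of $\delta_2$ and requires careful bookkeeping of which norm of $\bar\by$ enters each estimate, particularly the passage $W_\infty(\mD(A_\lambda),Y)\hookrightarrow L^\infty(0,\infty;V)$ and the $L^2(0,\infty;\bbH^2(\Omega))$ bound on $\bar\by$ needed for the adjoint nonlinearity. A secondary technical point is justifying the integration by parts in time in the weak adjoint identity at the level of the Gelfand triple $\mD(A_\lambda)\subset Y\subset[\mD(A_\lambda)]'$, i.e. verifying that $\bp$ indeed has a time-derivative in $L^2(0,\infty;[\mD(A_\lambda)]')$ and that the boundary terms at $0$ and $\infty$ vanish appropriately.
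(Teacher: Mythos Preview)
Your overall strategy matches the paper's: verify differentiability of $e$, establish surjectivity of $De(\bar\by,\bar u)$ via Lemma~\ref{lem:contractive_pert} (after possibly shrinking $\delta_2$), invoke the Lagrange multiplier theorem, and read off \eqref{eq:costate_NLQ}--\eqref{eq:control_NLQ}. One small slip: in Lemma~\ref{lem:contractive_pert} the base operator is already $A-BK$, so the perturbation you feed in should be $G=-F'(\bar\by)$ (or $+F'(\bar\by)$, the sign is irrelevant for the norm bound), not $-F'(\bar\by)+BK$.

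Where you diverge from the paper is in the regularity and estimate for $\bp$. You propose to treat the adjoint equation as a backward evolution governed by $A'$ and run a contraction argument in $W_\infty(Y,[\mD(A_\lambda)]')$. This is plausible, but it presupposes a backward-in-time maximal regularity result for $-\dot\bp-A'\bp=\bg$ on $(0,\infty)$ with decay at infinity, which is not set up anywhere in the paper and whose formulation (no terminal condition, only decay) needs care. The paper avoids this entirely: since the multiplier theorem already places $\bp\in L^2(0,\infty;Y)$ (the dual of the range space of $e$), it estimates $\dot\bp$ directly from the weak identity \eqref{eq:nonlin_aux5} by testing against $\bz\in W_\infty^0(\mD(A_\lambda),Y)$ and using Proposition~\ref{prop:N_estimates}(i)--(ii) together with $\|\bar\by\|_{L^\infty(0,\infty;V)}\le M\|\by_0\|_V$; this gives $\dot\bp\in L^2(0,\infty;[\mD(A_\lambda)]')$ by density. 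The $L^2(0,\infty;Y)$ bound on $\bp$ itself is then obtained by a duality argument: for arbitrary $\mathbf r\in L^2(0,\infty;Y)$, pick $(\bz,v)$ with $De(\bar\by,\bar u)(\bz,v)=(\mathbf r,0)$ and the surjectivity estimate \eqref{eq:nonlin_aux1}, so that $\langle\bp,\mathbf r\rangle=DJ(\bar\by,\bar u)(\bz,v)\le M(\|\bar\by\|_{L^2}+\|\bar u\|_{L^2})\|\mathbf r\|$. This route is shorter, stays entirely within what the Lagrange multiplier theorem gives you, and sidesteps the issue you flag in your last paragraph about justifying the backward time-derivative.
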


\begin{remark}\label{rem:very_weak_eq}
{\em {Equation \eqref{eq:costate_NLQ} is satisfied in the sense that
\begin{equation}\label{eq:costate_interp}
\langle \bp, \dot{\bz} \rangle_{L^2(0,\infty;Y)} - \langle \bp,A \bz - P((\bar \by\cdot \nabla) \bz +(\bz \cdot \nabla) \bar \by ) \rangle_{L^2(0,\infty;Y)} =  \langle \bar \by, \bz\rangle_{L^2(0,\infty;Y)},
\end{equation}
%for all $\bz\in W_{0}^{1,2}(0,\infty;\mD(A))$, where
%\begin{equation}\label{eq:aux_space_comp_supp}
 %W_{0}^{1,2}(0,\infty;\mD(A)) = \left\{ \bz \in W^{1,2} (0,\infty;\mD(A))\colon \bz  \mbox{ has compact support in } (0,\infty)\right\}.
%\end{equation}
for all $\bz\in W^0_\infty(\mD(A_\lambda),Y)$, where
\begin{equation}\label{eq:aux_space_comp_supp}
W^0_\infty(\mD(A_\lambda),Y)=\left\{ \bz \in W_\infty(\mD(A_\lambda),Y) \ | \ \bz(0)=0 \right\}.
 %W_{0}^{1,2}(0,\infty;\mD(A)) = \left\{ \bz \in W^{1,2} (0,\infty;\mD(A))\colon \bz  \mbox{ has compact support in } (0,\infty)\right\}.
\end{equation}

}}
\end{remark}

\begin{proof}
  %We follow the line of argumentation used in the proof of Proposition \ref{prop:linOS}.
  For now, let us assume that $\delta_2=\delta_1$. Then
  %By ,
   problem \eqref{eq:NLQprob} has a solution $(\bar{\by},\bar{u})$ by Lemma \ref{lem:ex_opt_sol}.
   %with associated state $\bar{\by}\in W_\infty(\mD(A),Y)$.  As a consequence, $(\bar{\by},\bar{u})$ is a solution to the following problem:
   %In the first part of the proof, we derive abstract optimality conditions, by proving that the mapping $e$ (used for formulating the constraints) has a surjective derivative.
We are going to derive optimality conditions by proving that the linearization of $e$ is surjective. Since $F(y)=PN(\by,\by)$ and by Corollary \ref{cor:N_time_estimates}, we conclude that
%For proving the differentiability of $e$, we only need to consider the nonlinear term. We have $F(\by)=N(\by,\by)$ and we know that $N$ is a bounded bilinear mapping from $W_\infty(\mD(A),Y) \times W_\infty(\mD(A),Y)$ to $L^2(0,\infty;Y)$, by Lemma {xxx}. Thus
$N$ and $F$ are Fr\'{e}chet differentiable. Hence, $e$ is Fr\'{e}chet differentiable with
\begin{align*}
 &De(\by,u)\colon W_\infty(\mD(A_\lambda),Y) \times L^2(0,\infty;U) \to L^2(0,\infty;Y)\times V\\
 &De(\by,u)(\bz,v)=(\dot{\bz}-(A\bz-P(N(\by,\bz)+N(\bz,\by))+Bv),\bz(0)).\end{align*}
% Let us recall the definition of $A$
%  \begin{equation}\label{eq:A_NSE}
%   \mD(A) = \bbH^2(\Omega)\cap V, \ \ Ay = P(\nu \Delta \by - (\by \cdot \nabla)\bw - (\bw \cdot \nabla)\by ).
% \end{equation}
We are going to show that $De(\bar{\by},\bar{u})$ is surjective for $\delta_{2}$ small enough. For an arbitrary pair $(\mathbf{r},\mathbf{s})\in L^2(0,\infty;Y) \times V$, we consider
\begin{align*}
\dot{\bz} - (A\bz-P(N(\bar{\by},\bz)+N(\bz,\bar{\by}))+Bv) &= \mathbf{r}, \ \
\bz(0)  = \mathbf{s}.
\end{align*}
From Corollary \ref{cor:N_time_estimates} and Lemma \ref{lem:ex_opt_sol}, it follows that
\begin{align*}
 \| P(N(\bar{\by},\bz)+N(\bz,\bar{\by}))\|_{L^2(0,\infty;Y)}
 %&\le M\left( \int_0^\infty \| \bar{\by}\|_{\bbH^2(\Omega)}^2 \|\bz\|_V^2\, \mathrm{d}t \right)^{\frac{1}{2}} + M\left( \int_0^\infty \| \bar{\by}\|_{V}^2 \|\bz\|_{\bbH^2(\Omega)}^2\, \mathrm{d}t \right)^{\frac{1}{2}} \\
 &\le M \| \bar{\by}\|_{W_\infty(\mD(A_\lambda),Y)} \| \bz\|_{W_\infty(\mD(A_\lambda),Y)} \le M \delta_{2} \| \bz\|_{W_\infty(\mD(A_\lambda),Y)}.
%\end{align*}
%By , it further holds that
%\begin{align*}
  %\| P(N(\bar{\by},\bz)+N(\bz,\bar{\by}))\|_{L^2(0,\infty;Y)} \le M \delta_{2} \| \bz\|_{W_\infty(\mD(A),Y)}.
\end{align*}
By possibly reducing $\delta_{2}$, we can w.l.o.g.\@ assume that
%For sufficiently small $\delta_{2}$,
the operator $G\in \mathcal{L}(W_\infty(\mD(A_\lambda),Y),L^2(0,\infty;Y))$ defined by
\begin{align*}
 (G\bz)(t) := DF(\by(t))(\bz(t))= P(N(\bar{\by}(t),\bz(t))+N(\bz(t),\bar{\by}(t)))
\end{align*}
satisfies $\|G\| < \frac{1}{M_K}$. Lemma \ref{lem:contractive_pert} implies the existence of a unique solution $\bz \in W_{\infty}(\mD(A_\lambda),Y)$ to  the system
\begin{align*}
 \dot{\bz}- ((A-BK)\bz+P(N(\bar{\by},\bz)+N(\bz,\bar{\by}))) &= \mathbf{r}, \ \
\bz(0)  = \mathbf{s}.
\end{align*}
%Since $\bz \in W_\infty(\mD(A),Y)$,
  The surjectivity of $De(\bar{\by},\bar{u})$ follows by setting  $v=-K\bz\in L^2(0,\infty;U)$. Moreover, we additionally have that
\begin{align}\label{eq:nonlin_aux1}
\| \bz \| _{W_\infty(\mD(A),Y)} \le M \left( \| \mathbf{r} \| _{L^2(0,\infty;Y)}+ \| \mathbf{s} \| _V\right)
\end{align}
where the constant $M$ is independent of $(\mathbf{r},\mathbf{s})$ and $\by_0$.
%It follows from
The surjectivity of $De(\bar{\by},\bar{u})$ implies the existence of a unique pair $(\bp,\mu)\in L^2(0,\infty;Y)\times V'$ such that for all $(\bz,v)\in W_\infty(\mD(A_\lambda),Y)\times L^2(0,\infty;U)$
%we have
\begin{equation}\label{eq:nonlin_aux2}
DJ(\bar{\by},\bar{u})(\bz,v) - \langle (\bp,\mu), De(\bar{\by},\bar{u})(\bz,v) \rangle_{L^2(0,\infty;Y)\times V',L^2(0,\infty;Y)\times V}=0.
\end{equation}
%As in the linear case, it holds that
%In the second part of the proof,
In the following, we use \eqref{eq:nonlin_aux2} to derive the costate equation \eqref{eq:costate_NLQ} and relation \eqref{eq:control_NLQ}.
Note that $J$ is differentiable with
\begin{equation}\label{eq:nonlin_aux3}
\begin{aligned}
 DJ(\bar{\by},\bar{u})(\bz,v)  &= \langle \bar{\by},\bz \rangle_{L^2(0,\infty;Y)} + \alpha \langle \bar{u},v \rangle_{L^2(0,\infty;U)}= \Big\langle \begin{pmatrix} \bar{\by}  \\ \alpha \bar{u}  \end{pmatrix},
\begin{pmatrix}  \bz \\ v \end{pmatrix} \Big\rangle_{L^2(0,\infty;Y) \times L^2(0,\infty;U)}.
\end{aligned}
\end{equation}
Moreover, for all $(\bz,v)\in W_\infty(\mD(A_\lambda),Y) \times L^2(0,\infty;U)$
\begin{equation}\label{eq:nonlin_aux4}
\begin{aligned}
  &\langle (\bp,\mu),De(\bar{\by},\bar{u})(\bz,v)) \rangle_{L^2(0,\infty;Y)\times V',L^2(0,\infty;Y)\times V} \\&= \langle \bp,\dot{\bz}\rangle_{L^2(0,\infty;Y)} - \langle \bp,A \bz \rangle _{L^2(0,\infty;Y)} + \langle  \bp, G \bz  \rangle _{L^2(0,\infty;Y)}- \langle \bp,Bv \rangle_{L^2(0,\infty;U)}  + \langle \mu, \bz(0) \rangle_{V',V}.
\end{aligned}
\end{equation}
Taking $\bz=0$ and letting $v$ vary in $L^2(0,\infty;U)$, from \eqref{eq:nonlin_aux2}, \eqref{eq:nonlin_aux3} and \eqref{eq:nonlin_aux4}, we obtain
\begin{align*}
 \alpha \bar{u} + B^* \bp = 0 \text{ in } L^2(0,\infty;U),
\end{align*}
and, hence, relation \eqref{eq:control_NLQ}.
%Let us set
%\begin{align*}
%W^0_\infty(\mD(A),Y)=\left\{ \bz \in W_\infty(\mD(A),Y) \ | \ \bz(0)=0 \right\}.
%\end{align*}

Taking  $v=0$, we conclude that for all $\bz \in W^0_\infty(\mD(A_\lambda),Y)$, we have
%Setting $v=0$ and letting $\bz$ range in $W_{0}(0,\infty;\mD(A))$ we have by \eqref{eq:nonlin_aux3} and \eqref{eq:nonlin_aux4}
\begin{equation}\label{eq:nonlin_aux5}
 \langle \bp, \dot{\bz} \rangle_{L^2(0,\infty;Y)} = \langle \bp,A \bz -G \bz\rangle_{L^2(0,\infty;Y)} + \langle \bar \by, \bz\rangle_{L^2(0,\infty;Y)} .
\end{equation}
% The time derivative of $\bp$, in the sense of distributions, can be extended to a linear form on $W^0_\infty(\mD(A_\lambda),Y)$ with the formula:
% \begin{align*}
%  \langle \dot{\bp},\bz \rangle _ {(W^0_\infty(\mD(A_\lambda),Y))',W^0_\infty(\mD(A_\lambda),Y)} = -\langle \bp, \dot{\bz} \rangle _{L^2(0,\infty;Y)}.
%  \end{align*}
With Proposition \ref{prop:N_estimates}(i) and (ii) we obtain for all $z\in W_\infty^0(\mD(A_\lambda),Y)$
\begin{equation*}
\begin{aligned}
&\langle \bp,\dot{\bz} \rangle_{L^2(0,\infty;Y)}\le | \langle \bp,A\bz\rangle _{L^2(0,\infty;Y)} | +| \langle \bp,G\bz \rangle_{L^2(0,\infty;Y)} | + | \langle \bar{\by},\bz \rangle_{L^2(0,\infty;Y)} | \\
&\qquad \le M \left( \| \bp \|_{L^2(0,\infty;Y)}  +\| \bar{\by} \|_{L^2(0,\infty;Y)} \right) \| \bz \|_{L^2(0,\infty;\bbH^2(\Omega))} + |\langle \bp, P(N(\bar \by,\bz) + N(\bz, \bar \by))\rangle_{L^2(0,\infty;Y)}| \\
%&\qquad \le M \left( \| \bp \|_{L^2(0,\infty;Y)}  +\| \bar{\by} \|_{L^2(0,\infty;Y)} \right) \| \bz \|_{L^2(0,\infty;\mD(A))}+ \| \bp \|_{L^2(0,\infty;Y)} \| \bar{\by} \|_{W_\infty(\mD(A),Y)} \| \bz \| _{W_\infty(\mD(A),Y)}  \\
&\qquad \le M \left( \| \bp \|_{L^2(0,\infty;Y)}  +\| \bar{\by} \|_{L^2(0,\infty;Y)} \right) \| \bz \|_{L^2(0,\infty;\bbH^2(\Omega))}+\int_0^\infty |\langle \bp(t), N(\bar \by(t),\bz(t)) + N(\bz(t), \bar \by(t))\rangle_Y| \, \mathrm{d}t \\
&\qquad \le M \left( \| \bp \|_{L^2(0,\infty;Y)}  +\| \bar{\by} \|_{L^2(0,\infty;Y)} \right) \| \bz \|_{L^2(0,\infty;\bbH^2(\Omega))} +  M \int_0^\infty \|\bp(t)\|_Y \, \|\bar \by(t)\|_{V} \, \|\bz(t)\|_{\bbH^2(\Omega)}\, \mathrm{d}t\\
&\qquad \le M \left( \| \bp \|_{L^2(0,\infty;Y)}  +\| \bar{\by} \|_{L^2(0,\infty;Y)} + \| \bp \|_{L^2(0,\infty;Y)} \| \bar{\by} \|_{L^\infty(0,\infty;V)} \right)  \| \bz \|_{L^2(0,\infty;\bbH^2(\Omega))} \\
&\qquad \le M \left( \| \bp \|_{L^2(0,\infty;Y)}  +\| \by_0 \|_{V}   \right)  \| \bz \|_{L^2(0,\infty;\bbH^2(\Omega))}.
\end{aligned}
\end{equation*}
Since $W^0_\infty(\mD(A_\lambda),Y)$ is dense in $L^2(0,\infty;\bbH^2(\Omega))$ for the $L^2(0,\infty;\bbH^2(\Omega))$-norm, we can extend $\dot{\bp}$ to a bounded linear form on $L^2(0,\infty;\bbH^2(\Omega))$, i.e., $\dot{\bp}$ can be extended to an element of $L^2(0,\infty;[\mD(A_\lambda)]')$, moreover the following bound holds true:
\begin{align}\label{eq:nonlin_aux6}
\| \dot{\bp} \|_{L^2(0,\infty;[\mD(A_\lambda)]')} \le M \left( \| \bp \| _{L^2(0,\infty;\bbH^2(\Omega))} + \| \by _0 \| _V \right).
\end{align}
It follows that $\bp \in W_\infty(Y,[\mD(A_\lambda)]')$ and that the costate equation \eqref{eq:costate_NLQ} is satisfied.

Let us bound $\bp \in L^2(0,\infty;Y)$. For this purpose, consider $\mathbf{r} \in L^2(0,\infty;Y)$ and assume that $(\bz,v)$ satisfy  $De(\bar{\by},\bar{u})(\bz,v)=(\mathbf{r},0)$ and the bound \eqref{eq:nonlin_aux1} (with $\mathbf{s}=0$). From \eqref{eq:nonlin_aux2}, the expression \eqref{eq:nonlin_aux3} of $DJ(\bar{\by},\bar{u})$, estimate \eqref{eq:nonlin_aux1}, and estimate \eqref{eq:sol_est_NLQ} on $(\bar{\by},\bar{u})$, we derive:
\begin{align*}
 \langle \bp, \mathbf{r} \rangle _{L^2(0,\infty;Y)}& = \langle (\bp,\mu),(\mathbf{r},0) \rangle_ {L^2(0,\infty;Y) \times V', L^2(0,\infty;Y) \times V}  \\
& =\langle De(\bar{\by},\bar{u})'(\bp,\mu),(\bz,v) \rangle_{W_\infty(\mD(A_\lambda),Y)' \times L^2(0,\infty;U), W_\infty(\mD(A_\lambda),Y) \times L^2(0,\infty;U) } \\
& = DJ(\bar{\by},\bar{u})(\bz,v) \\
& \le M\left( \| \bar{\by} \|_{L^2(0,\infty;Y)} + \| \bar{u} \|_{L^2(0,\infty;U)} \right) \left( \| \bz \|_{L^2(0,\infty;Y)} + \| v \|_{L^2(0,\infty;U)} \right) \\
&\le M \| \by _0 \| _V \| \| \mathbf{r} \| _{L^2(0,\infty;Y)}.
\end{align*}
Since $\mathbf{r}$ was arbitrary and $M$ is independent of $\mathbf{r}$, we obtain that $\| \bp \| _{L^2(0,\infty;Y)} \le M \| \by _0 \| _V$. Combining this estimate with \eqref{eq:nonlin_aux6}, we finally obtain \eqref{eq:estim_costate_NLQ}.
\end{proof}

\subsection{Sensitivity analysis with respect to $\by_0$}

We define the space
\begin{align}\label{eq:X_prod_space}
	X:= V\times L^2(0,\infty;Y) \times L^2(0,\infty;[\mD(A_\lambda)]') \times L^2(0,\infty;U),
\end{align}
endowed with the $l_\infty$ product norm. Consider now the mapping $\Phi$, defined from $W_\infty(\mD(A_\lambda),Y) \times L^2(0,\infty;U) \times W_\infty(Y,[\mD(A_\lambda)]')$ to $X$ by
\begin{equation} \label{eq:defPhi}
\Phi(\by,u,\bp) =
\begin{pmatrix}
\by(0) \\ \dot{\by}- A\by + F(\by)-Bu \\ - \dot{\bp} - A'\bp - P((\by\cdot \nabla)\bp-(\nabla \by)^T \bp) - \by \\
\alpha u + B^* \bp
\end{pmatrix}.
\end{equation}

The well-posedness of $\Phi$ follows from the considerations on $e(\by,u)$ and the costate equation \eqref{eq:costate_NLQ} that have been given in the proof of Proposition \ref{proposition:optiCondWeak}.

\begin{lemma} \label{lemma:inverseMappingWeak}
There exist $\delta_3 > 0$, $\delta_3'>0$, and three $C^\infty$-mappings
\begin{equation*}
\by_0 \in B_V(\delta_3) \mapsto \big( \mathcal{Y}(\by_0),\mathcal{U}(\by_0),\mathcal{P}(\by_0) \big)
\in W_\infty(\mD(A_\lambda),Y) \times L^2(0,\infty;U) \times W_\infty(Y,[\mD(A_\lambda)]')
\end{equation*}
such that for all $\by_0 \in B_V(\delta_3)$, the triplet $\big( \mathcal{Y}(\by_0),\mathcal{U}(\by_0),\mathcal{P}(\by_0) \big)$ is the unique solution to
\begin{equation}\label{eq:inverse_aux}
\Phi(\by,u,\bp)= (\by_0,0,0,0), \quad
\max \big( \| \by \|_{W_\infty(\mD(A_\lambda),Y)}, \| u \|_{L^2(0,\infty;U)}, \| \bp \|_{W_\infty(Y,[\mD(A_\lambda)]')} \big) \leq \delta_3'
\end{equation}
in $W_\infty(\mD(A_\lambda),Y) \times L^2(0,\infty;U) \times W_\infty(Y,[\mD(A_\lambda)]')$.
Moreover, there exists a constant $M>0$ such that for all $\by_0 \in B_V(\delta_3)$,
\begin{equation} \label{eq:lipschitzReg}
\max \big( \| \mathcal{Y}(\by_0) \|_{W_\infty(\mD(A_\lambda),Y)}, \| \mathcal{U}(\by_0) \|_{L^2(0,\infty;U)}, \| \mathcal{P}(\by_0) \|_{W_\infty(Y,[\mD(A_\lambda)]')} \big) \leq M \| \by_0 \|_V.
\end{equation}
\end{lemma}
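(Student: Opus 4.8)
The plan is to apply the implicit function theorem to the mapping $\Phi$ defined in \eqref{eq:defPhi}, regarded as a map between Banach spaces, at the point $(\by_0,\by,u,\bp)=(0,0,0,0)$, which trivially solves $\Phi(0,0,0)=(0,0,0,0)$. First I would check that $\Phi$ is of class $C^\infty$: all the terms are either linear and bounded (the $\by(0)$ trace map, the maps $\by\mapsto\dot\by-A\by$, $\bp\mapsto-\dot\bp-A'\bp$, $u\mapsto Bu$, $\bp\mapsto B^*\bp$, $u\mapsto\alpha u$), or they are the fixed bilinear/quadratic expressions $F(\by)=PN(\by,\by)$ and $P((\by\cdot\nabla)\bp-(\nabla\by)^T\bp)$, which by Corollary \ref{cor:N_time_estimates} are bounded multilinear — hence polynomial, hence $C^\infty$ — maps on the indicated spaces. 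In particular $\Phi$ is real-analytic, so the resulting solution maps will automatically be $C^\infty$.

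Next I would compute the partial derivative $D_{(\by,u,\bp)}\Phi(0,0,0,0)$ and show it is a Banach space isomorphism from $W_\infty(\mD(A_\lambda),Y)\times L^2(0,\infty;U)\times W_\infty(Y,[\mD(A_\lambda)]')$ onto $X$. Since $F$ and the nonlinear costate term vanish to second order at $0$, this linearization is exactly the decoupled linear-quadratic optimality system: given $(\by_0,\mathbf r,\mathbf g,\mathbf h)\in X$, one must solve
\begin{align*}
\by(0)&=\by_0, & \dot\by-A\by-Bu&=\mathbf r,\\
-\dot\bp-A'\bp-\by&=\mathbf g, & \alpha u+B^*\bp&=\mathbf h .
\end{align*}
Bijectivity of this system is the heart of the matter. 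Uniqueness/existence follows from the standard theory of the linear-quadratic regulator associated with the stabilizable pair $(A,B)$ (Assumption \ref{ass:A2}): the forward-backward system is equivalent, via the Riccati feedback $\bp = \Pi\by + (\text{affine correction})$, to a well-posed closed-loop equation $\dot\by=(A-\alpha^{-1}BB^*\Pi)\by+\dots$, and one reads off the estimates from Consequence \ref{cons:C3} and Proposition \ref{proposition:optiCondWeak} (or one argues directly that the LQ problem with data $(\by_0,\mathbf r,\mathbf g,\mathbf h)$ has a unique minimizer whose optimality system is precisely the above). The bounded-inverse theorem then gives boundedness of the inverse. I expect this verification — identifying the linearized operator with the LQ optimality map and invoking its well-posedness with the correct functional-analytic spaces ($W_\infty(\mD(A_\lambda),Y)$ on the state side, $W_\infty(Y,[\mD(A_\lambda)]')$ on the costate side) — to be the main obstacle, since one must be careful that the costate equation is interpreted in the very weak sense of Remark \ref{rem:very_weak_eq} and that the regularity/estimates match across the Gelfand triple.

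Finally, the implicit function theorem yields $\delta_3>0$ and $C^\infty$ maps $\by_0\mapsto(\mathcal Y(\by_0),\mathcal U(\by_0),\mathcal P(\by_0))$ defined on $B_V(\delta_3)$ with $(\mathcal Y(0),\mathcal U(0),\mathcal P(0))=(0,0,0)$, solving $\Phi=(\by_0,0,0,0)$, and locally unique in a ball of radius $\delta_3'$ in the product space; shrinking $\delta_3$ if necessary makes the image of $B_V(\delta_3)$ lie inside that ball. The bound \eqref{eq:lipschitzReg} then follows from the $C^1$ (hence locally Lipschitz) dependence together with the vanishing at $\by_0=0$: $\|(\mathcal Y,\mathcal U,\mathcal P)(\by_0)\|\le \sup_{B_V(\delta_3)}\|D(\mathcal Y,\mathcal U,\mathcal P)\|\,\|\by_0\|_V =: M\|\by_0\|_V$. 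One should also remark that for $\by_0\in B_V(\delta_2)$ the optimal triple $(\bar\by,\bar u,\bp)$ from Lemma \ref{lem:ex_opt_sol} and Proposition \ref{proposition:optiCondWeak} satisfies $\Phi(\bar\by,\bar u,\bp)=(\by_0,0,0,0)$ together with the a priori bound \eqref{eq:estim_costate_NLQ}–\eqref{eq:sol_est_NLQ}, so that after a further reduction of $\delta_3$ it falls within the uniqueness region of \eqref{eq:inverse_aux}; hence $(\mathcal Y,\mathcal U,\mathcal P)(\by_0)$ coincides with the genuine optimal state, control, and costate. This identification is what makes the lemma useful in the sequel.
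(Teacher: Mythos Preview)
Your proposal is correct and follows essentially the same approach as the paper: apply the inverse/implicit function theorem to $\Phi$ at the origin, using that $\Phi$ is polynomial (hence $C^\infty$) and that the linearization $D\Phi(0,0,0)$ is precisely the LQ optimality system, whose well-posedness the paper isolates as Proposition~\ref{prop:linOS}; the Lipschitz bound \eqref{eq:lipschitzReg} is obtained exactly as you say, from bounded derivatives and vanishing at $0$. Your final paragraph (identifying $(\mathcal Y,\mathcal U,\mathcal P)$ with the genuine optimal triple) is not part of this lemma in the paper but is the content of the subsequent Proposition~\ref{proposition:UisOptimal}.
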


\begin{proof}
Let us show the statement by means of the
%The result is a consequence of the
inverse function theorem. For this purpose, note that $\Phi$ only contains polynomial terms and thus is infinitely differentiable. It further holds that $\Phi(0,0,0)= (0,0,0,0)$. Let us  prove that $D\Phi(0,0,0)$ is an isomorphism and take  $(\bw_1,\bw_2,\bw_3,w_4) \in X$ and $(\by,u,\bp)\in W_\infty(\mD(A_\lambda),Y)\times L^2(0,\infty;U) \times W_\infty(Y,[\mD(A_\lambda)]')$. We have
\begin{equation} \label{eq:equivalenceWeak}
D\Phi(0,0,0) (\by,u,\bp)= (\bw_1,\bw_{2},\bw_{3},w_4) \Longleftrightarrow
\begin{cases}
\begin{array}{rcl}
\by(0) & = & \bw_1 \\
\dot{\by} - A \by - Bu & = & \bw_2 \\
-\dot{\bp} - A' \bp - \by & = & \bw_3 \\
\alpha u + B^* \bp & = & w_4.
\end{array}
\end{cases}
\end{equation}
By Proposition \ref{prop:linOS}, the linear system on the left-hand has a unique solution $(\by,u,\bp)$, moreover
\begin{align*}
\| (\by,u,\bp) \|_{W_\infty(\mD(A_\lambda),Y) \times L^2(0,\infty;U) \times W_\infty(Y,[\mD(A_\lambda)]')}
\leq \ & M \| (\bw_1,\bw_2,\bw_3,w_4) \|_X .
\end{align*}
Hence, $D\Phi(0,0,0)$ is an isomorphism and, with the inverse function theorem, we have  $\delta_3>0$, $\delta_3'>0$, and $C^\infty$-mappings $\mathcal{Y}$, $\mathcal{U}$, and $\mathcal{P}$ satisfying \eqref{eq:inverse_aux}.

With regard to estimate \eqref{eq:lipschitzReg}, let us (possibly) reduce $\delta_3$ such that the norms of the derivatives of the three mappings are bounded on $B_V(\delta_3)$ by some constant $M>0$. As a consequence, the three mappings are in particular Lipschitz continuous with modulus $M$ and estimate \eqref{eq:lipschitzReg} follows from the fact that $\big( \mathcal{Y}(0),\mathcal{U}(0),(\mathcal{P}(0) \big)= (0,0,0)$.
\end{proof}

% In the following proposition we prove that for $\by_0$ close enough to 0, $\mathcal{U}(\by_0)$ is the unique solution to \eqref{eq:NLQprob} with initial condition $\by_0$.

\begin{proposition} \label{proposition:UisOptimal}
There exists $\delta_4\in (0,\min(\delta_2,\delta_3)]$ such that for all $\by_0 \in B_V(\delta_4)$, the pair $(\mathcal{Y}(\by_0),\mathcal{U}(\by_0))$ is the unique solution to \eqref{eq:NLQprob} with initial condition $\by_0$. Moreover,  $\mathcal{P}(\by_0)$ is the unique associated costate.
\end{proposition}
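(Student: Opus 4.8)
The plan is to show that the triplet $(\mathcal{Y}(\by_0),\mathcal{U}(\by_0),\mathcal{P}(\by_0))$ produced by Lemma~\ref{lemma:inverseMappingWeak} coincides, for $\by_0$ sufficiently small, with the optimal state/control pair and its costate constructed in Lemma~\ref{lem:ex_opt_sol} and Proposition~\ref{proposition:optiCondWeak}. The essential point is that the equation $\Phi(\by,u,\bp)=(\by_0,0,0,0)$ is \emph{exactly} the first-order optimality system for \eqref{eq:NLQprob}: the second component is the state equation $e(\by,u)=(0,\by_0)$, the fourth component is the gradient condition \eqref{eq:control_NLQ}, and the third component is the costate equation \eqref{eq:costate_NLQ} written in the very weak form of Remark~\ref{rem:very_weak_eq}. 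Hence any optimal solution, together with its (unique) costate, solves $\Phi=(\by_0,0,0,0)$; conversely, $(\mathcal{Y}(\by_0),\mathcal{U}(\by_0),\mathcal{P}(\by_0))$ is \emph{a} solution of this system, and the work is to match the two.

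First I would choose $\delta_4\in(0,\min(\delta_2,\delta_3)]$ small enough that two things hold simultaneously: (a) by Lemma~\ref{lem:ex_opt_sol} and \eqref{eq:sol_est_NLQ}, together with the costate bound \eqref{eq:estim_costate_NLQ}, every optimal solution $(\bar{\by},\bar u)$ of \eqref{eq:NLQprob} with its costate $\bar{\bp}$ satisfies the a~priori bound
\begin{align*}
\max\big(\|\bar{\by}\|_{W_\infty(\mD(A_\lambda),Y)},\|\bar u\|_{L^2(0,\infty;U)},\|\bar{\bp}\|_{W_\infty(Y,[\mD(A_\lambda)]')}\big)\le M\|\by_0\|_V \le \delta_3';
\end{align*}
and (b) $\delta_4\le\delta_3$ so that Lemma~\ref{lemma:inverseMappingWeak} applies. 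For such $\by_0$, Proposition~\ref{proposition:optiCondWeak} guarantees that $(\bar{\by},\bar u,\bar{\bp})$ solves $\Phi(\by,u,\bp)=(\by_0,0,0,0)$ and, by the bound above, lies in the ball of radius $\delta_3'$. The uniqueness clause in \eqref{eq:inverse_aux} then forces $(\bar{\by},\bar u,\bar{\bp})=(\mathcal{Y}(\by_0),\mathcal{U}(\by_0),\mathcal{P}(\by_0))$. Since an optimal solution exists (Lemma~\ref{lem:ex_opt_sol}) and any optimal solution must equal this fixed triplet, the optimal pair is unique and equals $(\mathcal{Y}(\by_0),\mathcal{U}(\by_0))$, with associated costate $\mathcal{P}(\by_0)$.

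The one gap to close is that a~priori $(\mathcal{Y}(\by_0),\mathcal{U}(\by_0))$ is only known to solve the \emph{necessary} optimality system, not to be a genuine minimizer; the reasoning in the previous paragraph shows it is the unique \emph{candidate}, and since a minimizer exists and is necessarily a candidate, the two coincide. I expect the main technical obstacle to be purely bookkeeping: verifying that the third component of $\Phi$, interpreted in $L^2(0,\infty;[\mD(A_\lambda)]')$, really is the weak costate equation \eqref{eq:costate_interp} tested against $\bz\in W^0_\infty(\mD(A_\lambda),Y)$ --- i.e.\ that the adjoint term $-A'\bp-P((\by\cdot\nabla)\bp-(\nabla\by)^T\bp)$ in \eqref{eq:defPhi} pairs correctly with $A\bz-P((\bar\by\cdot\nabla)\bz+(\bz\cdot\nabla)\bar\by)$ under the duality $\langle\cdot,\cdot\rangle_{L^2(0,\infty;Y)}$, using the symmetry identity $\langle(\by\cdot\nabla)\bz,\bw\rangle=\langle(\by\cdot\nabla)\bw,\bz\rangle$ recorded before Proposition~\ref{prop:N_estimates_2}. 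Once that identification is in place, the proof is just an application of the uniqueness statement of Lemma~\ref{lemma:inverseMappingWeak} combined with the a~priori estimates already established.
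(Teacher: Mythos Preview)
Your proposal is correct and follows essentially the same approach as the paper: use the a~priori bounds from Lemma~\ref{lem:ex_opt_sol} and Proposition~\ref{proposition:optiCondWeak} to place any optimal triplet $(\bar{\by},\bar u,\bar{\bp})$ inside the uniqueness ball of Lemma~\ref{lemma:inverseMappingWeak}, then invoke \eqref{eq:inverse_aux} to identify it with $(\mathcal{Y}(\by_0),\mathcal{U}(\by_0),\mathcal{P}(\by_0))$. Your explicit remark that optimality of the inverse-function-theorem triplet follows only because a minimizer is known to exist (and must coincide with it) is a point the paper leaves implicit, and the bookkeeping concern about the costate equation is legitimate but not addressed in the paper either.
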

\begin{proof}
 For now, assume that $\delta_4= \min(\delta_2,\delta_3)$  and consider  $\by_0 \in B_V(\delta_4)$. Then, there exists a solution $(\bar{\by},\bar{u})$ to \eqref{eq:NLQprob} with associated costate $\bar{\bp}$ due to Lemma \ref{lem:ex_opt_sol} and Proposition \ref{proposition:optiCondWeak}. This solution has to satisfy
 \begin{align*}
 \max ( \| \bar{\by} \|_{W_\infty(\mD(A_\lambda),Y)}, \| \bar{u} \|_{L^2(0,\infty;U)}, \| \bar{\bp} \| _{W_\infty(Y,[\mD(A_\lambda)]')} ) \le M \| \by _0 \|_V.
\end{align*}
A (possible) reduction of $\delta_4$ ensures that
\begin{align*}
 \max ( \| \bar{\by} \|_{W_\infty(\mD(A_\lambda),Y)}, \| \bar{u} \|_{L^2(0,\infty;U)}, \| \bar{\bp} \| _{W_\infty(Y,[\mD(A_\lambda)]')} ) \le \delta_3'.
\end{align*}
Since $\Phi(\bar{\by},\bar{u},\bar{\bp} ) = (\by_0,0,0,0)$, Lemma \ref{lemma:inverseMappingWeak} implies that $(\bar{\by},\bar{u},\bar{\bp})=(\mathcal{Y}(\by_0),\mathcal{U}(\by_0),\mathcal{P}( \by_0) )$ which proves the assertion.
\end{proof}

\begin{corollary} \label{coro:diff}
The value function $\mathcal{V}$ is infinitely differentiable on $B_V(\delta_4)$.
\end{corollary}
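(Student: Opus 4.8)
The plan is to read off the smoothness of $\mathcal{V}$ from the parametrized solution map already constructed. The starting point is the identity
\[
\mathcal{V}(\by_0) = J\big(\mathcal{Y}(\by_0),\mathcal{U}(\by_0)\big), \qquad \by_0 \in B_V(\delta_4),
\]
which is precisely what Proposition~\ref{proposition:UisOptimal} gives: for $\by_0 \in B_V(\delta_4)$ the pair $(\mathcal{Y}(\by_0),\mathcal{U}(\by_0))$ is \emph{the} unique minimizer of \eqref{eq:NLQprob} with initial datum $\by_0$, so the infimum defining $\mathcal{V}(\by_0)$ is attained and equals the value of $J$ at that pair.

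Next I would observe that $J$ is itself a $C^\infty$ (in fact quadratic) functional on the product space $W_\infty(\mD(A_\lambda),Y) \times L^2(0,\infty;U)$. The control term $\tfrac{\alpha}{2}\|u\|_{L^2(0,\infty;U)}^2$ is a continuous quadratic form on $L^2(0,\infty;U)$, hence smooth; the state term $\tfrac{1}{2}\|\by\|_{L^2(0,\infty;Y)}^2$ is a continuous quadratic form on $L^2(0,\infty;Y)$, and since the embedding $W_\infty(\mD(A_\lambda),Y) \hookrightarrow L^2(0,\infty;Y)$ is continuous, its restriction to $W_\infty(\mD(A_\lambda),Y)$ is again a continuous quadratic form and therefore infinitely Fr\'echet differentiable. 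Thus $J \in C^\infty\big(W_\infty(\mD(A_\lambda),Y)\times L^2(0,\infty;U);\R\big)$.

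Finally, Lemma~\ref{lemma:inverseMappingWeak} asserts that $\by_0 \mapsto (\mathcal{Y}(\by_0),\mathcal{U}(\by_0))$ is a $C^\infty$-mapping from $B_V(\delta_3) \supseteq B_V(\delta_4)$ into $W_\infty(\mD(A_\lambda),Y)\times L^2(0,\infty;U)$. Composing this $C^\infty$-mapping with the $C^\infty$-functional $J$ and invoking the chain rule for Fr\'echet-differentiable maps between Banach spaces yields that $\mathcal{V}$ is infinitely differentiable on $B_V(\delta_4)$. There is essentially no obstacle here: the only point requiring a moment's attention is that $J$ must be recognized as smooth on the regularity space $W_\infty(\mD(A_\lambda),Y)$ rather than merely on $L^2(0,\infty;Y)\times L^2(0,\infty;U)$, which is immediate from continuity of the embedding; everything else is the standard composition argument.
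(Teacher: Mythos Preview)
Your proof is correct and follows essentially the same approach as the paper: write $\mathcal{V}(\by_0)=J(\mathcal{Y}(\by_0),\mathcal{U}(\by_0))$ via Proposition~\ref{proposition:UisOptimal}, observe that $J$ is $C^\infty$, and compose with the $C^\infty$-maps $\mathcal{Y},\mathcal{U}$ from Lemma~\ref{lemma:inverseMappingWeak}. The only difference is that you spell out in more detail why the quadratic functional $J$ is smooth on $W_\infty(\mD(A_\lambda),Y)\times L^2(0,\infty;U)$, which the paper simply asserts.
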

\begin{proof}
Note that by definition we have $\mathcal{V}(\by_0) = J(\mathcal{Y}(\by_0),\mathcal{U}(\by_0))$. Since $J$ is infinitely differentiable, differentiability of $\mV$ is a consequence of the composition of infinitely differentiable mappings.
\end{proof}

%%%%%%%%%%%%%%%%%%%%%%%%%%%%%%%%%%%%%%%%%%%%%%%%%%%%%%%%%%%%%%%%%%%%%%%%
%%%%%%%%%%%%%%%%%%%%%%%%%%%%%%%%%%%%%%%%%%%%%%%%%%%%%%%%%%%%%%%%%%%%%%%%
%%%%%%%%%%%%%%%%%%%%%%%%%%%%%%%%%%%%%%%%%%%%%%%%%%%%%%%%%%%%%%%%%%%%%%%%

\begin{lemma} \label{lemma:costateEqualDerivative}
There exists $\delta_5 \in (0,\delta_4]$ such that for all $\by_0 \in B_V(\delta_5)$,
$\| \mathcal{Y}(\by_0) \|_{L^\infty(0,\infty;V)} \leq  \delta_4$
and
\begin{equation}\label{eq:costate_derValueFunc}
\bp(t)= D\mathcal{V}(\by(t)), \quad \forall t \geq 0 \ \  (\text{in } V')
\end{equation}
where $\by= \mathcal{Y}(\by_0)$ and $\bp= \mathcal{P}(\by_0)$. Additionally, it holds that $D\mV(\by(\cdot))\in L^2(0,\infty;Y)$.
\end{lemma}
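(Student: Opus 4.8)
The plan is to exploit the dynamic programming structure of the problem: the optimal trajectory $\by = \mathcal{Y}(\by_0)$ restricted to a time shift $[s,\infty)$ is itself the optimal trajectory for initial condition $\by(s)$, and the value function satisfies Bellman's principle along optimal trajectories. First I would fix $\delta_5 \in (0,\delta_4]$ small enough that Corollary \ref{cor:feas_control} and Proposition \ref{proposition:UisOptimal} apply on $B_V(\delta_5)$ and that the \emph{a priori} estimate \eqref{eq:lipschitzReg} together with the embedding $W_\infty(\mD(A_\lambda),Y)\hookrightarrow L^\infty(0,\infty;V)$ (recall $[\mD(A_\lambda),Y]_{1/2}=V$ from Consequence \ref{cons:C2}) gives $\|\mathcal{Y}(\by_0)\|_{L^\infty(0,\infty;V)}\le M\|\by_0\|_V\le\delta_4$, which is the first claimed bound; here I would pick $\delta_5\le\delta_4/M$.

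The core of the argument is the identity $\bp(t)=D\mathcal{V}(\by(t))$ in $V'$. I would proceed in two steps. First, using that $\mathcal{Y},\mathcal{U},\mathcal{P}$ solve the shift-invariant system $\Phi(\by,u,\bp)=(\by_0,0,0,0)$, I would verify that for every $t\ge 0$ the triple $\big(\by(t+\cdot),u(t+\cdot),\bp(t+\cdot)\big)$ is the unique small solution of $\Phi(\cdot)=(\by(t),0,0,0)$, hence by Lemma \ref{lemma:inverseMappingWeak} and Proposition \ref{proposition:UisOptimal} equals $\big(\mathcal{Y}(\by(t)),\mathcal{U}(\by(t)),\mathcal{P}(\by(t))\big)$; in particular $\mathcal{V}(\by(t))=\frac12\int_t^\infty\|\by\|_Y^2+\frac{\alpha}{2}\int_t^\infty\|u\|_U^2$. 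Differentiating the map $t\mapsto\mathcal{V}(\by(t))$ and using Corollary \ref{coro:diff} gives, on the one hand, $\frac{d}{dt}\mathcal{V}(\by(t))=-\tfrac12\|\by(t)\|_Y^2-\tfrac{\alpha}{2}\|u(t)\|_U^2$ and, on the other, $\langle D\mathcal{V}(\by(t)),\dot{\by}(t)\rangle_{V',V}$. Second, I would test the costate equation \eqref{eq:costate_NLQ} (in its weak form \eqref{eq:costate_interp}) in a way that produces the analogous expression for $\bp$: pairing $-\dot{\bp}-A'\bp-P((\by\cdot\nabla)\bp-(\nabla\by)^T\bp)=\by$ against $\by$ itself on $[t,\infty)$, integrating by parts in time, and using the antisymmetry relation $\langle(\by\cdot\nabla)\bz,\bw\rangle=\langle(\by\cdot\nabla)\bw,\bz\rangle$ from Proposition \ref{prop:N_estimates_2} to kill the nonlinear term, I would obtain $\langle\bp(t),\by(t)+\text{(lower order)}\rangle$ matching $-\int_t^\infty(\|\by\|_Y^2+\alpha\|u\|_U^2)$ via \eqref{eq:control_NLQ}; comparing the two gives $\langle\bp(t)-D\mathcal{V}(\by(t)),\dot{\by}(t)\rangle=0$. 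To upgrade this to the pointwise identity in $V'$ rather than merely an integrated statement, I would instead perturb the initial datum: for $\bv\in V$ and small $\epsilon$, compare $\mathcal{V}(\by(t)+\epsilon\bv)$ with $\mathcal{V}(\by(t))$ using the $C^\infty$ dependence from Lemma \ref{lemma:inverseMappingWeak}, expand to first order, and identify the linear functional $D\mathcal{V}(\by(t))$ with $\bp(t)$ by writing $\mathcal{V}(\by(t)+\epsilon\bv)-\mathcal{V}(\by(t))$ as an integral of the running cost along the perturbed optimal trajectory and linearizing; the adjoint state $\bp(t)$ is exactly the multiplier appearing in \eqref{eq:nonlin_aux2}, so the first variation of $J$ along the perturbation equals $\langle\bp(t),\bv\rangle$. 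This is the standard "costate = gradient of value function" computation and it is valid in $V'$ because $\bp\in W_\infty(Y,[\mD(A_\lambda)]')\hookrightarrow C([0,\infty);V')$, so $\bp(t)$ is well defined in $V'$ for each $t$.

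Finally, $D\mathcal{V}(\by(\cdot))\in L^2(0,\infty;Y)$ follows immediately once \eqref{eq:costate_derValueFunc} is established, since $\bp=\mathcal{P}(\by_0)\in W_\infty(Y,[\mD(A_\lambda)]')\subset L^2(0,\infty;Y)$. The main obstacle I anticipate is the second step: making rigorous the identification of $\bp(t)$ with $D\mathcal{V}(\by(t))$ as elements of $V'$, because the value function is only known to be differentiable on $V$ (Corollary \ref{coro:diff}) while the costate a priori lives in the larger space $[\mD(A_\lambda)]'$; one must check that the weak costate equation \eqref{eq:costate_interp} together with the optimality relation \eqref{eq:control_NLQ} actually forces $\bp(t)\in V'$ pairs correctly against all of $V$ (not just $\mD(A_\lambda)$), which is where the regularity $\bp\in L^2(0,\infty;Y)$ and the continuity of $t\mapsto\bp(t)\in V'$ are essential. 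The time-shift invariance of $\Phi$ and the uniqueness in Lemma \ref{lemma:inverseMappingWeak} do most of the conceptual work; the rest is a careful but routine first-variation computation.
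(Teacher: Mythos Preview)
Your plan is essentially the paper's approach, though organized less efficiently. The paper proves the identity at $t=0$ first---writing $\mathcal{V}(\tilde{\by}_0)-\mathcal{V}(\by_0)$ explicitly, adding and subtracting $\langle\bp,\dot{\tilde{\by}}-(A\tilde{\by}-F(\tilde{\by})+B\tilde{u})\rangle$ and its analogue for $(\by,u)$ (both vanish by feasibility), then using the algebraic identities for the quadratic differences together with the costate equation \eqref{eq:costate_NLQ} and the relation \eqref{eq:control_NLQ} to isolate the linear term $\langle\bp(0),\tilde{\by}_0-\by_0\rangle_{V',V}$ with a remainder that is $O(\|\tilde{\by}_0-\by_0\|_V^2)$ via the Lipschitz continuity of $\mathcal{Y},\mathcal{U}$---and only then invokes the time-shift to transport the result to all $t\ge 0$. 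Your ``second step'' perturbation is exactly this computation, and your time-shift argument is the same dynamic programming step.

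The detour you sketch first (differentiating $t\mapsto\mathcal{V}(\by(t))$ and testing the costate equation against $\by$ itself) is, as you correctly note, insufficient: it only yields information in the single direction $\dot{\by}(t)$ and cannot identify $\bp(t)$ with $D\mathcal{V}(\by(t))$ as functionals on all of $V$. You can simply drop that step. The remaining perturbation argument plus time-shift is the paper's proof; the only point to make more precise is the explicit bookkeeping showing the remainder is quadratic, which follows from the Lipschitz bounds in Lemma~\ref{lemma:inverseMappingWeak} and the estimate $\|F(\tilde{\by}-\by)\|_{L^2(0,\infty;Y)}\le M\|\tilde{\by}-\by\|_{W_\infty(\mD(A_\lambda),Y)}^2$ from Corollary~\ref{cor:F_Lip}.
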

The proof is given in Appendix \ref{sec:appendix_proofs}.

%\begin{remark}
 %Let us mention that \eqref{eq:costate_derValueFunc} is to be understood for all $t \ge 0$ in $V'$ since $p\in W_\infty(Y,[\mD(A_\lambda)]')\hookrightarrow C_b([0,\infty;V')$. Alternatively, the relation is valid for almost all $t \ge 0$ in $Y$ since $p\in L^2(0,\infty;Y)$.
%\end{remark}

%\tcr{The only difference to the proof provided in \cite{BreKP18} is \eqref{eq:diffValFunc2} and the adjoint equation. We could thus omit it...?}

Using the optimality condition \eqref{eq:control_NLQ}, we obtain the optimal control in feedback form.

\begin{corollary} \label{coro:optimFeedback}
For all $\by_0 \in B_V(\delta_5)$,
\begin{equation*}
u(t)= -\frac{1}{\alpha} B^{*}D\mathcal{V}(\by(t)), \quad \text{for a.e.\@ $t > 0$},
\end{equation*}
where $\by= \mathcal{Y}(\by_0)$ and $u= \mathcal{U}(\by_0)$.
\end{corollary}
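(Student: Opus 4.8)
The plan is to combine the pointwise optimality condition established in Proposition \ref{proposition:optiCondWeak} with the identification of the costate as the derivative of the value function provided by Lemma \ref{lemma:costateEqualDerivative}; the corollary is then immediate.

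First I would fix $\by_0 \in B_V(\delta_5)$ and set $\by = \mathcal{Y}(\by_0)$, $u = \mathcal{U}(\by_0)$, and $\bp = \mathcal{P}(\by_0)$. Since $\delta_5 \le \delta_4 \le \min(\delta_2,\delta_3)$, Proposition \ref{proposition:UisOptimal} guarantees that $(\by,u)$ is the unique solution to \eqref{eq:NLQprob} with initial condition $\by_0$ and that $\bp$ is the associated costate. In particular, the fourth component of the identity $\Phi(\by,u,\bp) = (\by_0,0,0,0)$ satisfied by $(\mathcal{Y}(\by_0),\mathcal{U}(\by_0),\mathcal{P}(\by_0))$ — equivalently, the optimality relation \eqref{eq:control_NLQ} from Proposition \ref{proposition:optiCondWeak} — gives $\alpha u + B^*\bp = 0$ in $L^2(0,\infty;U)$, whence $u(t) = -\frac{1}{\alpha} B^*\bp(t)$ for a.e.\@ $t>0$.

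Next I would invoke Lemma \ref{lemma:costateEqualDerivative}: because $\by_0 \in B_V(\delta_5)$, it holds that $\bp(t) = D\mathcal{V}(\by(t))$ in $V'$ for all $t \ge 0$, and moreover $D\mathcal{V}(\by(\cdot)) \in L^2(0,\infty;Y)$. Since $\bp$ also belongs to $L^2(0,\infty;Y)$ (indeed to $W_\infty(Y,[\mD(A_\lambda)]')$), for a.e.\@ $t$ both $\bp(t)$ and $D\mathcal{V}(\by(t))$ lie in $Y$, so the equality $\bp(t) = D\mathcal{V}(\by(t))$ holds in $Y$ for a.e.\@ $t$. Applying the bounded operator $B^* \in \mathcal{L}(Y,U)$ and combining with the previous step yields $u(t) = -\frac{1}{\alpha} B^*\bp(t) = -\frac{1}{\alpha} B^* D\mathcal{V}(\by(t))$ for a.e.\@ $t>0$, which is the assertion.

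There is essentially no obstacle in this argument; the one point requiring a little care is to ensure that $B^* D\mathcal{V}(\by(t))$ is well defined, which is precisely why the last statement of Lemma \ref{lemma:costateEqualDerivative} — that $D\mathcal{V}(\by(\cdot))$ takes values in $Y$, not merely in $V'$ — is needed. All the substantive work has already been carried out in Proposition \ref{proposition:optiCondWeak}, Proposition \ref{proposition:UisOptimal}, and Lemma \ref{lemma:costateEqualDerivative}, so the proof of the corollary reduces to assembling these facts.
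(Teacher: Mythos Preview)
Your proposal is correct and follows essentially the same approach as the paper, which simply states the corollary as an immediate consequence of the optimality condition \eqref{eq:control_NLQ} combined with Lemma \ref{lemma:costateEqualDerivative}. Your added remark about why $B^* D\mathcal{V}(\by(t))$ is well defined (using the final statement of Lemma \ref{lemma:costateEqualDerivative}) makes explicit a point the paper leaves implicit.
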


In the two-dimensional case, see \cite[Proposition 16]{BreKP19}, an approximation of the value function was
obtained by investigating the equations which can be derived  by
 successive differentiation   of  the Hamilton-Jacobi-Bellman equation for the optimal solution with respect to $\mathcal{V}$. The HJB equation associated to \eqref{eq:NLQprob} is given by
\begin{align}\label{eq:HJB}
 D\mV(\by)(A\by-F(\by))+\frac{1}{2}\| \by \|_Y^2 - \frac{1}{2\alpha} \| B^* D\mV(\by)\|_U^2 = 0 \text{ for } \by\in \mD(A).
\end{align}
In the 2-D case this equation is rigorous, in the 3-D case, however, it is only formal. In fact,
the term $B^* D\mV(\by)$ is not well-defined for $B \in \mathcal{L}(U,Y)$ since $\mV$ is not differentiable on $Y$.

%Contrary to the two-dimensional case, see \cite[Proposition 16]{BreKP19}, at this point we cannot readily derive the Hamilton-Jacobi-Bellman equation associated to the optimal solution. Indeed, a formal consideration of the equation
%\begin{align}\label{eq:HJB}
% D\mV(\by)(A\by-F(\by))+\frac{1}{2}\| \by \|_Y^2 - \frac{1}{2\alpha} \| B^* D\mV(\by)\|_U^2 = 0
%\end{align}
%shows that the term $B^* D\mV(\by)$ is not well-defined for $B \in \mathcal{L}(U,Y)$ since $V$ is not differentiable on $Y$. The arguments provided in \cite{BreKP19} rely on successive differentiations of \eqref{eq:HJB} and are thus not applicable here.

%%%%%%%%%%%%%%%%%%%%%%%%%%%%%%%%%%%%%%%%%%%%%%%%%%%%%%%%%%%%%%%%%%%%%%%%
%%%%%%%%%%%%%%%%%%%%%%%%%%%%%%%%%%%%%%%%%%%%%%%%%%%%%%%%%%%%%%%%%%%%%%%%

%%%%%%%%%%%%%%%%%%%%%%%%%%%%%%%%%%%%%%%%%%%%%%%%%%%%%%%%%%%%%%%%%%%%%%%%

\section{Multilinear Lyapunov operator equations}

The purpose of this section is to analyze a sequence of certain multilinear operator equations. In the two-dimensional case, in \cite{BreKP19b} it is shown that these equations can be derived by successive differentiations of the HJB equation. Moreover, their solutions are multilinear forms that represent the derivatives of the value function in zero. In \cite{BreKP19b}, this enabled us to derive a polynomial feedback law via a Taylor series expansion of the value function. In contrast to the 2-D case, at this point we cannot follow the arguments provided in \cite{BreKP19b}. In particular, so far we only know that $\mV$ is differentiable on $B_{V}(\delta_{4})$ but not necessarily on $B_{Y}(\delta_{4})$. As will be shown below, it is nevertheless possible to derive a unique sequence of multilinear forms that result in a polynomial feedback law which locally approximates the optimal control.

Let us begin with the algebraic operator Riccati equation
\begin{align}\label{eq:are}
 \langle \bz_{2},A^{*} \Pi \bz_{1 } \rangle_{Y} + \langle \Pi A\bz_{1},\bz_{2}\rangle _{Y}+\langle \bz_{2},\bz_{1} \rangle_{Y} - \frac{1}{\alpha} \langle B^{*} \Pi \bz_{1},B^{*}\Pi \bz_{2} \rangle_{U} = 0, \ \ \forall \bz_1,\bz_2 \in \mD(A).
\end{align}
A general treatment of \eqref{eq:are} for abstract linear control problems has been given in, e.g.,  \cite{CurZ95,LasT00}. We emphasize that the stabilizability assumption \ref{ass:A2} and \eqref{eq:VY-coerc}, which implies exponential detectability of $(A,\mathrm{id})$, ensure the existence of a unique nonnegative stabilizing solution $\Pi \in \mathcal{L}(Y)$ to \eqref{eq:are}.
%Moreover, as has been shown in the context of boundary control system in, e.g., \cite{BarLT06,Ray06}, the operator $\Pi$ has a regularizing property and also satisfies $\Pi \in \mL(Y,\mD(A))$.{}
 In the following, we use the notation
$$A_{\pi} := A-\frac{1}{\alpha}BB^{*}\Pi$$
for the closed-loop operator associated with the linearized stabilization problem. Since $A_{\pi}$ generates an analytic exponentially stable semigroup $e^{A_{\pi}t}$ on $Y$, for trajectories of the form $\tilde{\by}(\cdot)=e^{A_\pi\cdot}\by, \by\in V$ it follows that $\tilde{\by}\in W_{\infty}(\mD(A_\pi),Y)$. Similarly, for $\by \in V'$,  \cite[Corollary II.3.2.1]{Benetal07}, we have that $\tilde{\by}(\cdot)=e^{A\cdot}\by \in W_{\infty}(Y,[\mD(A_\lambda^*)]')$. Note, that \cite[Corollary II.3.2.1]{Benetal07} is stated for $T<\infty$. But we can follow its proof and apply \cite[Theorem II.1.3.1(i)]{Benetal07} instead of \cite[Theorem II.1.3.1(ii)]{Benetal07} to arrive at the conclusion for $T=\infty$.

 %\texttt{todo: below, we use/keep the notation $L^2(0,\infty;\mD(A_\lambda))$ rather than $L^2(0,\infty;\mD(A_\pi))$. Probably, we should give a remark why this is valid.}

From Corollary \ref{coro:diff} we already know that $\mathcal{V}$ is infinitely differentiable on $B_V(\delta_4)$. We shall relate its $k$-th derivative $D^k\mV(0)$ in zero to a multilinear form $\mathcal{T}_{k}\in \mathcal{M}(V^k,\mathbb R)$. Below, we study such multilinear forms and show that they additionally satisfy
\begin{align}\label{eq:more_reg_ex}
\mathcal{T}_{k} \in  \mathcal{S}_k(V,V'):=\bigcap\limits_{\ell=1}^k \mM(V^{\ell-1}\times V'\times V^{k-\ell},\mathbb R),
\end{align}
i.e., the form $\mT_k$ can be extended  from $V$ to $V'$ as  bounded multilinear form separately in each coordinate. For $\mT_k\in S_k(V,V')$, we introduce
\begin{align*}
 |\mT_k| = \sum_{\ell=1}^k \sup\limits_{\substack{\| \bv_{\ell} \|_{V'}\le 1\\ \| \bv_i \|_V \le 1, i\neq \ell }} | \mT_k(\bv_1,\dots,\bv_{\ell-1},\bv_{\ell},\bv_{\ell+1},\dots,\bv_k ) |.
\end{align*}
Let us illustrate the definition of $S_k(V,V')$ in the case that $k=2$. For $\mathcal{T}_{2}\in \mM(V\times V,\mathbb R)$, we can define an associated operator $\widetilde{\Pi}$ via
\begin{align*}
\widetilde{\Pi}&\colon V \to V', \quad
\widetilde{\Pi}\colon \bv \mapsto \widetilde{\Pi} \bv := \mathcal{T}_{2}(\bv,\cdot) \ \ \text{for all } \bv \in V.
\end{align*}
%Indeed, we have $\widetilde{\Pi} \in \mL(V,V')$ since
%\begin{align*}
 %\| \widetilde{\Pi} \bv \| _{V'} = \sup\limits_{\| \tilde{\bv} \|_V \le 1} | (\widetilde{\Pi}\bv) (\tilde{\bv}) |= \sup\limits_{\| \tilde{\bv} \|_V \le 1}|\mathcal{T}_{2}(\bv,\tilde{\bv})| \le M   \| \bv \| _V.
%\end{align*}
 %For $\mathcal{T}_{2} \in S_2(V,V')$ and, hence, $\mathcal{T}_{2}\in \mM(V\times V',\mathbb R)$, it additionally follows that $\widetilde{\Pi} \in \mL(V)$.
 %%Let us then consider $(\widetilde{\Pi},\mD(\widetilde{\Pi}))$ as an operator in $\mathbb L^2(\Omega)$.
 %Similarly, from $\mT_2\in \mM(V'\times V,\mathbb R)$ we conclude that $\widetilde{\Pi} \in \mL(V')$. \texttt{By interpolation, we thus obtain $\widetilde{\Pi}\in\mL(Y)$?}
 Indeed, we have $\widetilde{\Pi} \in \mL(V,V')$ since
\begin{align*}
 \| \widetilde{\Pi} \bv \| _{V'} = \sup\limits_{\| \tilde{\bv} \|_V \le 1} | \langle \tilde{\bv}, \widetilde{\Pi}\bv\rangle_{V,V'}|= \sup\limits_{\| \tilde{\bv} \|_V \le 1}|\mathcal{T}_{2}(\bv,\tilde{\bv})| \le M   \| \bv \| _V.
\end{align*}
  The following considerations further clarify the additional regularity which can be gained if  $\mathcal{T}_{2} \in S_2(V,V')$, rather than $\mathcal{M}(V\times V,\R)$ only.  For  $\mathcal{T}_{2} \in S_2(V,V')$ and hence $\mathcal{T}_{2} \in \mathcal{M}(V\times V',\R)$ we have that $\bv_1\to\mathcal{T}_{2}(\bv_1,\cdot) $ can be represented by an operator $\Pi_1 \in \mathcal{L}(V)$ such that
   \begin{equation}\label{eq:aux11}
  \mathcal{T}_{2}(\bv_1,\bv_2)=\langle \bv_2,\Pi_1\bv_1\rangle_{V',V} \text{ for all } \bv_1 \in V, \bv_2\in V'.
  \end{equation}
  Similarly, since $\mathcal{T}_{2} \in \mathcal{M}(V'\times V,\R)$ we have that  $\bv_1\to\mathcal{T}_{2}(\bv_1,\cdot) $ can be represented by an operator $\Pi_2 \in \mathcal{L}(V')$ such that
  $$
  \mathcal{T}_{2}(\bv_1,\bv_2)=\langle \bv_2,\Pi_2\bv_1\rangle_{V,V'} \text{ for all } \bv_1 \in V', \bv_2\in V.
  $$
  For $(\bv_1,\bv_2)\in V\times V$ we have from \eqref{eq:aux11}
 that  $\mathcal{T}_{2}(\bv_1,\bv_2) =\langle \bv_2,\Pi_1\bv_1\rangle_{V',V}=\langle \bv_2,\Pi_2\bv_1\rangle_{V,V'}$, and thus for each $\bv_1\in V$
 $$
 \langle \bv_2,(\Pi_1- \Pi_2)\bv_1\rangle_{V,V'}=0 \text{ for all } \bv_2 \in V.
 $$
 Thus $(\Pi_1-\Pi_2)|_V =0 $, and since $V$ is dense in $V'$, and $\Pi_2\in \mathcal{L}(V')$, the operator $\Pi_2$ is the unique continuous extension of $\Pi_1$ from $V$ to $V'$. We  denote both $\Pi_1$ and $\Pi_2$ by $\Pi$. Since $\Pi \in \mathcal{L}(V)\cap\mathcal{L}(V')$ interpolation implies that $\Pi \in \mathcal{L}(Y)$. For $(\bv_1,\bv_2)\in Y\times V$ we have $\mathcal{T}_2(\bv_1,\bv_2)=\langle \bv_2,\Pi \bv_1\rangle_{V,V'} = \langle \bv_2,\Pi \bv_1\rangle_{Y}$. By continuity in the first coordinate of the inner product in $Y$ therefore $\mathcal{T}_2(\bv_1,\bv_2)= \langle \bv_2,\Pi \bv_1\rangle_{Y}$ for all $(\bv_1,\bv_2)\in Y\times Y$. As a consequence, if  $\mathcal{T}_{2} \in S_2(V,V')$,  then $\mathcal{T}_{2} \in \mathcal{M}(Y^2,\R).$

We turn to our first results on the existence of solutions of generalized Lyapunov equations.

\begin{theorem} \label{thm:existenceUniquenessLyapunov_1}
Let $k \geq 3$ and $\mathcal{F} \in \mathcal{S}_{k-1}(V,V').$
Then, there exists a unique $\mT \in \mathcal{S}_k(V,V')$ such that for all
$(\bz_1,\dots,\bz_k) \in \mD(A)^k$,
\begin{equation} \label{eq:Lyapunov2}
\sum_{i=1}^k \mT(\bz_1,\dots,\bz_{i-1},A_{\pi} \bz_i,\bz_{i+1},\dots,\bz_k) = \mathcal{G}(\bz_1,\dots,\bz_k),
\end{equation}
where:
\begin{equation*}
\mathcal{G}(\bz_1,\dots,\bz_k)
=  \sum_{j=1}^{k-1} \sum_{i=1}^{k-j} \mathcal{F} (A_0(\bz_j,\bz_{j+i}),\bz_1,\dots,\bz_{j-1},\bz_{j+1},\dots,\bz_{j+i-1},\bz_{j+i+1},\dots,\bz_k).
\end{equation*}
Moreover, if $\mathcal{G}$ is symmetric, then $\mT,$ considered as an element of $\mM(V^k,\mathbb R)$, is also symmetric.
\end{theorem}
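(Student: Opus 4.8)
The plan is to solve \eqref{eq:Lyapunov2} by an explicit integral (convolution) formula built from the stable semigroup $e^{A_\pi t}$, mimicking the variation-of-constants representation of the solution of a Lyapunov equation. Concretely, I would first check that the right-hand side $\mathcal{G}$ is a well-defined element of $\mathcal{S}_k(V,V')$: this amounts to showing that for each fixed slot $\ell$ the map $\mathcal{G}(\bz_1,\dots,\bz_k)$ extends to be bounded when $\bz_\ell$ is measured in the $V'$-norm and all other arguments in $V$. The term $A_0(\bz_j,\bz_{j+i}) = N(\bz_j,\bz_{j+i})+N(\bz_{j+i},\bz_j)$ sits in the first slot of $\mathcal{F}$, so I would use Proposition~\ref{prop:N_estimates_2}(ii)--(iii) to bound $\|A_0(\bz_j,\bz_{j+i})\|_{V'}$ by $\|\bz_j\|_{\bbH^2}\|\bz_{j+i}\|_Y$ (or the symmetric version), combined with the hypothesis $\mathcal{F}\in\mathcal{S}_{k-1}(V,V')$, which allows me to put any one of the remaining arguments into $V'$. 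A bit of care with the index bookkeeping shows that every coordinate of $\mathcal{G}$ can be relaxed to $V'$, so $\mathcal{G}\in\mathcal{S}_k(V,V')$ with $|\mathcal{G}|\le M|\mathcal{F}|$.

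Next I would \emph{define} the candidate solution by
\begin{equation*}
\mT(\bz_1,\dots,\bz_k) := -\int_0^\infty \mathcal{G}\big(e^{A_\pi t}\bz_1,\dots,e^{A_\pi t}\bz_k\big)\dd t .
\end{equation*}
Since $e^{A_\pi t}$ is analytic and exponentially stable on $Y$, standard estimates give $\|e^{A_\pi t}\bz\|_V \le M t^{-1/2} e^{-\omega t}\|\bz\|_Y$ and $\|e^{A_\pi t}\bz\|_V\le M e^{-\omega t}\|\bz\|_V$, and by duality $\|e^{A_\pi t}\bz\|_{V'}$-type bounds also hold; combining these with the multilinear bound on $\mathcal{G}$ shows the integrand decays exponentially and has at worst an integrable $t^{-1/2}$ (or $t^{-1}$) singularity at $t=0$, provided at most one or two arguments are taken in the weak norm — which is exactly what is needed to land in $\mathcal{S}_k(V,V')$. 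Thus $\mT$ is well-defined, multilinear, and satisfies $|\mT|\le M|\mathcal{G}|\le M|\mathcal{F}|$. To verify that $\mT$ solves \eqref{eq:Lyapunov2}, I would take $\bz_1,\dots,\bz_k\in\mD(A)$, note that $t\mapsto \mathcal{G}(e^{A_\pi t}\bz_1,\dots,e^{A_\pi t}\bz_k)$ is differentiable with
\begin{equation*}
\frac{\dd}{\dd t}\,\mathcal{G}(e^{A_\pi t}\bz_1,\dots,e^{A_\pi t}\bz_k) = \sum_{i=1}^k \mathcal{G}\big(e^{A_\pi t}\bz_1,\dots,A_\pi e^{A_\pi t}\bz_i,\dots,e^{A_\pi t}\bz_k\big),
\end{equation*}
and then integrate from $0$ to $\infty$; the boundary term at $\infty$ vanishes by exponential stability, the boundary term at $0$ gives $-\mathcal{G}(\bz_1,\dots,\bz_k)$, and — after moving $\mT$'s defining integral inside using that $e^{A_\pi t}$ commutes with itself and maps $\mD(A)$ to $\mD(A)$ — the left side reproduces $\sum_i \mT(\bz_1,\dots,A_\pi\bz_i,\dots,\bz_k)$. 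This is the formal computation behind a Lyapunov equation; the only subtlety is justifying the differentiation and the interchange of the $t$-derivative with the (already convergent) defining integral of $\mT$, which follows from the regularizing bounds above together with dominated convergence.

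For uniqueness, I would take two solutions $\mT_1,\mT_2\in\mathcal{S}_k(V,V')$ and set $\mathcal{E}=\mT_1-\mT_2$, which satisfies the homogeneous equation $\sum_i \mathcal{E}(\bz_1,\dots,A_\pi\bz_i,\dots,\bz_k)=0$ on $\mD(A)^k$. Evaluating at $\bz_i = e^{A_\pi t}\bw_i$ with $\bw_i\in V$ (so that $e^{A_\pi t}\bw_i\in\mD(A_\pi)=\mD(A)$ for $t>0$ by analyticity) shows $\frac{\dd}{\dd t}\mathcal{E}(e^{A_\pi t}\bw_1,\dots,e^{A_\pi t}\bw_k)=0$, so this quantity is constant in $t$; letting $t\to\infty$ and using exponential stability forces the constant to be $0$, hence $\mathcal{E}\equiv 0$ on $V^k$ and, by the density/continuity arguments, on all of $\mathcal{S}_k(V,V')$. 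Finally, symmetry of $\mT$ when $\mathcal{G}$ is symmetric is immediate from the defining integral formula, since $e^{A_\pi t}$ is applied identically to every argument and permuting the $\bz_i$ leaves the integrand unchanged. The main obstacle I anticipate is the bookkeeping in the first step — verifying that each of the $k$ coordinates of $\mathcal{G}$ (and then of $\mT$) genuinely extends to $V'$, keeping track of which arguments of $\mathcal{F}$ must stay in $V$ versus $\bbH^2$ and matching that against the available $N$-estimates — rather than anything in the semigroup computation itself.
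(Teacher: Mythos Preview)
Your overall strategy --- the integral formula $\mT = -\int_0^\infty \mathcal{G}(e^{A_\pi t}\cdot,\ldots,e^{A_\pi t}\cdot)\dd t$, verification via the chain rule, and uniqueness by propagating along the semigroup --- is exactly the paper's. The gap is in the estimates, and it is genuine.

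First, $\mathcal{G}$ is \emph{not} in $\mathcal{S}_k(V,V')$; the claim $|\mathcal{G}|\le M|\mathcal{F}|$ fails. Take a term $\mathcal{F}(A_0(\bz_j,\bz_{j+i}),\ldots)$ and let the weak slot $\ell$ be neither $j$ nor $j+i$. Since $\mathcal{F}\in\mathcal{S}_{k-1}(V,V')$ admits only one $V'$-argument and that slot is occupied by $\bz_\ell$, the first argument $A_0(\bz_j,\bz_{j+i})$ must lie in $V$. But by Proposition~\ref{prop:N_estimates}(iii) this requires $\bz_j,\bz_{j+i}\in\bbH^2(\Omega)$; with only $\bz_j,\bz_{j+i}\in V$ you get $A_0\in V'$, not $V$. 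The estimates you quote from Proposition~\ref{prop:N_estimates_2} carry an $\bbH^2$-norm and do not yield a bound on $V^{k-1}\times V'$.

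Second, this cannot be repaired by your pointwise smoothing estimates. Using the correct bound $|\mathcal{G}(\ldots)|\le M\|\tilde\bz_j\|_{\bbH^2}\|\tilde\bz_{j+i}\|_{\bbH^2}\|\tilde\bz_\ell\|_{V'}\prod_m\|\tilde\bz_m\|_V$ together with $\|e^{A_\pi t}\bz\|_{\bbH^2}\le Mt^{-1/2}e^{-\omega t}\|\bz\|_V$ (applied twice) produces an integrand of order $t^{-1}e^{-ct}$, which is \emph{not} integrable at $t=0$; your parenthetical ``(or $t^{-1}$)'' is precisely the fatal case. The paper avoids this by using maximal $L^2$-regularity rather than pointwise smoothing: for $\bz\in V$ one has $e^{A_\pi\cdot}\bz\in W_\infty(\mD(A_\lambda),Y)\subset L^2(0,\infty;\bbH^2)\cap L^\infty(0,\infty;V)$, and for $\bz\in V'$ one has $e^{A_\pi\cdot}\bz\in W_\infty(Y,[\mD(A_\lambda^*)]')\subset L^2(0,\infty;Y)\cap L^\infty(0,\infty;V')$. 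The time integral is then controlled by H\"older in time via Corollary~\ref{cor:N_time_estimates}, e.g.\ $\|A_0(\tilde\bz_j,\tilde\bz_{j+i})\|_{L^1(0,\infty;V)}\le M\|\tilde\bz_j\|_{L^2(\bbH^2)}\|\tilde\bz_{j+i}\|_{L^2(\bbH^2)}$, with the remaining factors taken in $L^\infty$. This $L^2\times L^2\to L^1$ splitting is the mechanism your argument is missing.
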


\begin{proof}
The main idea of the proof relies on an explicit integral representation of the solution to multilinear operator equations. For the Lyapunov operator equation, this can be found in, e.g., \cite[Chapter 4]{CurZ95}, for multilinear equations arising in bilinear control problems, we refer to \cite{BreKP19}.

   For arbitrary $\ell \in \{1,\dots,k\}$ assume that $\bz_\ell \in V'$ and $\bz_j\in V$ for $j\neq \ell$. Let us define
   $$(\tilde{\bz}_1,\dots,\tilde{\bz}_k):=(e^{A_{\pi}t }\bz_1,\dots,e^{A_{\pi} t}\bz_k)$$
   as well as
\begin{equation}\label{eq:int_repr_lyap_eq}
\mT(\bz_1,\dots,\bz_k)
= - \int_0^\infty \mathcal{G}(\tilde{\bz}_1,\dots,\tilde{\bz}_k) \dd t.
\end{equation}
As mentioned before, since $\tilde{\bz}_j$ is the unique solution of $\dot{\by} = A_{\pi} \by$ with $\by(0)=\bz_j$ we have $\tilde{\bz}_j\in W_\infty(\mD(A_\lambda),Y)$ if $j\neq \ell$ and $\tilde{\bz}_{\ell} \in W_\infty(Y,[\mD(A_\lambda^{*})]')$. In particular, it holds that
\begin{align*}
\| \tilde{\bz}_j \|_{W_\infty(\mD(A_\lambda),Y)} \le M \| \bz_j \|_V, \quad \|  \tilde{\bz}_{\ell} \| _{W_\infty(Y,[\mD(A_\lambda^{*})]')} \le M \| \bz_{\ell} \|_{V'}.
\end{align*}
\emph{Well-posedness of $\mT$.} Let us show that $\mT$ as defined in \eqref{eq:int_repr_lyap_eq} satisfies $\mT \in S_k(V,V')$.
 %is the unique solution of \eqref{eq:Lyapunov2}. Symmetry of $\mT\in \mM(V^k,\mathbb R)$ then immediately follows from \eqref{eq:int_repr_lyap_eq} and symmetry of $\mathcal{G}$.
%
% For simplicity, we focus on the  first term of the double sum. We distinguish between the cases $\ell\le 2$ and $\ell >2$. \\
% Let $j \in \{1,\dots,k-1\}$ and $i \in \{1,\dots,k-j\}$ be arbitrary and consider
For this, we first consider terms of the form
\begin{align*}
\int_0^\infty |\mathcal{F} (A_0(\tilde{\bz}_j,\tilde{\bz}_{j+i}),\tilde{\bz}_1,\dots,\tilde{\bz}_{j-1},\tilde{\bz}_{j+1},\dots,\tilde{\bz}_{j+i-1},\tilde{\bz}_{j+i+1},\dots,\tilde{\bz}_k)| \mathrm{d}t
\end{align*}
 for which $j\neq \ell$ and $j+i\neq \ell$. In this case, we obtain
\begin{align*}
&\int_0^\infty |\mathcal{F} (A_0(\tilde{\bz}_j,\tilde{\bz}_{j+i}),\tilde{\bz}_1,\dots,\tilde{\bz}_{j-1},\tilde{\bz}_{j+1},\dots,\tilde{\bz}_{j+i-1},\tilde{\bz}_{j+i+1},\dots,\tilde{\bz}_k)|\, \mathrm{d}t \\
& \qquad \le M \int_0^\infty \| A_0(\tilde{\bz}_j,\tilde{\bz}_{j+i}) \| _V \| \tilde{\bz}_{\ell} \|_{V'} \left( \prod\limits_{m \notin \{j,j+i,\ell\}} \| \tilde{\bz}_m \| _V \right) \, \mathrm{d}t \\
& \qquad \le M \| \tilde{\bz}_{\ell} \|_{L^\infty(0,\infty;V')} \left( \prod\limits_{m\notin \{j,j+i,\ell\}} \| \tilde{\bz}_m \|_{L^\infty(0,\infty;V)} \right) \int_0^\infty \| A_0(\tilde{\bz}_j,\tilde{\bz}_{j+i}) \| _V \, \mathrm{d}t  \\
%%%
& \qquad \le M \|\bz_{\ell} \| _{V'} \left( \prod\limits_{m\notin \{j,j+i,\ell\}} \| \bz_m \|_V \right)  \int_0^\infty \| A_0(\tilde{\bz}_j,\tilde{\bz}_{j+i}) \| _V \, \mathrm{d}t.
\end{align*}
 Utilizing \eqref{eq:N_L1}  yields
\begin{align*}
\int_0^\infty \| A_0(\tilde{\bz}_j,\tilde{\bz}_{j+i})\|_V \, \mathrm{d}t &\le M   \| \tilde{\bz}_j\|_{L^2(0,\infty;\bbH^2(\Omega))} \| \tilde{\bz}_{j+i} \|_{L^2(0,\infty;\bbH^2(\Omega))} \le M \| \bz_j\| _{V} \| \bz_{j+i}\|_V,
\end{align*}
which shows
\begin{align}\label{eq:lyap_aux1}
	&\int_0^\infty |\mathcal{F} (A_0(\tilde{\bz}_j,\tilde{\bz}_{j+i}),\tilde{\bz}_1,\dots,\tilde{\bz}_{j-1},\tilde{\bz}_{j+1},\dots,\tilde{\bz}_{j+i-1},\tilde{\bz}_{j+i+1},\dots,\tilde{\bz}_k)|\, \mathrm{d}t   \le M\| \bz_{\ell} \| _{V'} \prod \limits_{m \neq\ell } \| \bz_m\|_V  .
\end{align}
For the terms with $j+i=\ell$, we utilize \eqref{eq:N_L1b} to obtain
\begin{equation}\label{eq:lyap_aux2}
\begin{aligned}
&\int_0^\infty |\mathcal{F} (A_0(\tilde{\bz}_j,\tilde{\bz}_{\ell}),\tilde{\bz}_1,\dots,\tilde{\bz}_{j-1},\tilde{\bz}_{j+1},\dots,\tilde{\bz}_{\ell-1},\tilde{\bz}_{\ell+1},\dots,\tilde{\bz}_k)|\, \mathrm{d}t \\
%& \quad \le M \int_0^\infty \| A_0(\tilde{\bz}_j,\tilde{\bz}_{\ell}) \| _{V'}  \left(\prod\limits_{m \notin \{j, \ell\}} \| \tilde{\bz}_m \| _V\right)\, \mathrm{d}t \\
%%
& \quad \le  M  \left( \prod\limits_{m\notin \{j,\ell\}} \| \tilde{\bz}_m \|_{L^\infty(0,\infty;V)} \right) \int_0^\infty \| A_0(\tilde{\bz}_j,\tilde{\bz}_{\ell}) \| _{V'} \, \mathrm{d}t  \\
%%%
& \quad \le M \left( \prod\limits_{m\notin \{j,\ell\}} \| \bz_m \|_V \right) \| \tilde{\bz}_j\| _{L^2(0,\infty;\bbH^2(\Omega))} \| \tilde{\bz}_{\ell}\|_{L^2(0,\infty;Y)}  \le M \| \bz_{\ell} \| _{V'} \prod \limits_{m \neq\ell } \| \bz_m\|_V  .
\end{aligned}
\end{equation}
Analogue estimates based on \eqref{eq:N_L1c} yield
\begin{align}\label{eq:lyap_aux3}
\int_0^\infty |\mathcal{F} (A_0(\tilde{\bz}_{\ell},\tilde{\bz}_{\ell+i}),\tilde{\bz}_1,\dots,\tilde{\bz}_{\ell-1},\tilde{\bz}_{\ell+1},\dots,\tilde{\bz}_{\ell+i-1},\tilde{\bz}_{\ell+i+1},\dots,\tilde{\bz}_k)|\, \mathrm{d}t  \le M \| \bz_{\ell} \| _{V'} \prod \limits_{m \neq\ell } \| \bz_m\|_V  .
\end{align}
From \eqref{eq:lyap_aux1}, \eqref{eq:lyap_aux2}, \eqref{eq:lyap_aux3} together with \eqref{eq:int_repr_lyap_eq} we obtain
\begin{align*}
 | \mT (\bz_1,\dots,\bz_{\ell},\dots,\bz_k) | \le M \frac{k(k-1)}{2} \| \bz_{\ell} \| _{V'} \prod \limits_{m \neq\ell } \| \bz_m\|_V  .
\end{align*}
Since $\ell$ was arbitrary this shows that $\mT \in \mathcal{S}_k(V,V')$.

\emph{$\mT$ is a solution}. Let us now show that $\mT$ solves \eqref{eq:Lyapunov2}. For $(\bz_1,\dots,\bz_k)\in \mD(A^2)^k$ we define
\begin{align*}
  f_{ij}\colon t\in [0,\infty) \mapsto \mathcal{F}(A_0(\tilde{\bz}_j,\tilde{\bz}_{j+i}),\tilde{\bz}_1,\dots,\tilde{\bz}_{j-1},\tilde{\bz}_{j+1},\dots,\tilde{\bz}_{j+i-1},\tilde{\bz}_{j+i+1},\dots,\tilde{\bz}_k)
\end{align*}
 where $j\in \{1,\dots,k-1\}$ and $i\in \{1,\dots,k-j\}$ are arbitrary and $\tilde{\bz}_m(t):=e^{A_\pi t} \bz_m$. By the previous considerations, we know that $f_{ij}\in L^1(0,\infty)$.
%  Let us further show that $f_{ij} \in W^{1,1}(0,\infty)$
%   with
% \begin{align*}
%  f_{ij}'(t) = \sum_ {i=1}^k \mathcal{F}(e^{A_\pi t}\bz_1,\dots,e^{A_\pi t} \bz_{i-1}, e^{A_\pi t} A_\pi \bz _i , e^{A_\pi t}\bz_{i+1},\dots,e^{A_\pi t}\bz_k ).
% \end{align*}
Note that we have the  decomposition $f_{ij}(t) = \widehat{\mathcal{F}} \circ \widehat{f}_{ij}(t)$,
where
\begin{align*} \widehat{\mathcal{F}}(\bv_1,\dots,\bv_k)&:=\mathcal{F}(A_0(A_\pi^{-1}\bv_{j},A_\pi^{-1}\bv_{j+i}),\bv_1,\dots,\bv_{j-1},\bv_{j+1},\dots,\bv_{j+i-1},\bv_{j+i+1},\dots,\bv_k),\\
 \widehat{f}_{ij}(t) &:= e^{A_\pi t} (\bz_1,\dots,\bz_{j-1}, A_{\pi} \bz_j,\bz_{j+1},\dots,\bz_{j+i-1}, A_{\pi}\bz_{j+i},\bz_{j+i+1},\dots,\bz_k).
\end{align*}
Observe that since $\mathcal{F}\in S_{k-1}(V,V')$, we have for $(\bv_1,\dots, \bv_k) \in V^k$
\begin{align*}
&| \widehat{\mathcal{F}}(\bv_1,\dots,\bv_k)| \le M \| A_0(A_\pi^{-1} \bv_j,A_\pi^{-1}\bv_{j+i})\|_{V'}  \prod\limits_{m\notin \{j,i+j\}} \| \bv_m\|_V \\
&\quad\le M \| A_\pi^{-1}\bv_j\| _Y \| A_\pi^{-1}\bv_{j+i}\|_{\mathbb H^2(\Omega)}  \prod\limits_{m\notin \{j,j+i\}} \| \bv_m\|_V \le M \| \bv_j\| _{Y} \| \bv_{j+i}\|_{Y}  \prod\limits_{m\notin \{j,i+j\}} \| \bv_m\|_V.
\end{align*}
Hence, $\widehat{\mathcal{F}}\in \mathcal{M}(V^k,\mathbb R)$. Moreover, since $(\bz_,\dots,\bz_k) \in \mD(A^2)^k$ we have $A_\pi e^{A_\pi \cdot} A_\pi \bz_j \in L^1(0,\infty;V)$, see \cite[Theorem II 6.13(c)]{Paz83}, and thus $\widehat{f}_{ij} \in W^{1,1}(0,\infty;V^k)$. From \cite[Lemma 9]{BreKP19}, it follows  that $f_{ij}\in W^{1,1}(0,\infty)$ and
%\begin{align*}
$  f_{ij}'(t) = D\widehat{\mathcal{F}}(\widehat{f}_{ij}(t))\widehat{f}_{ij}'(t).$
%\end{align*}

Let us now define the function
\begin{align*}
g(t):=\sum\limits_{j=1}^{k-1}\sum\limits_{i=1}^{k-j} f_{ij}(t)= \mathcal{G}(\tilde{\bz}_1,\dots,\tilde{\bz}_k).
\end{align*}
We obviously have $g\in W^{1,1}(0,\infty)$ and
%\begin{align*}
$g(T)-g(0) = \int_0^T g'(t) \mathrm{d}t.$
%\end{align*}
Moreover, there exists a sequence $\{t_{k}\}$ with $t_{k}\to \infty$ and $|g(t_{k})|\to 0$ for $k\to \infty$. Thus
\begin{align*}
	g(t)=g(t_{k})+ \int_{t_k}^{t}g'(s) \, \mathrm{d}s \Rightarrow |g(t)| \le | g(t_{k})| + \int_{t_{k}}^{\infty} | g'(s) | \, \mathrm{d}s.
\end{align*}
Since $g'\in L^{1}(0,\infty)$, it follows that $\int_{t_{k}}^{\infty} |g'(s)| \, \mathrm{d}s \to 0$ for $t_{k}\to \infty$. This implies $g(t)\to 0$ for $t\to \infty.$  It can be verified that
\begin{align*}
 g'(t) = \sum_{i=1}^k \mathcal{G}(\tilde{ \bz}_1,\dots, \tilde{\bz}_{i-1},A_\pi \tilde{\bz}_i,\tilde{\bz}_{i+1},\dots, \tilde{\bz}_k).
\end{align*}
Asymptotically, this leads to
\begin{align*}
\mathcal{G}(\bz_1,\dots,\bz_k)&=g(0) = - \int_0^\infty g'(t) \mathrm{d}t = -\int_0^\infty \sum_{i=1}^k \mathcal{G}(\tilde{ \bz}_1,\dots, \tilde{\bz}_{i-1},A_\pi \tilde{\bz}_i,\tilde{\bz}_{i+1},\dots, \tilde{\bz}_k) \\
&=\sum_{i=1}^k \mathcal{T}(\bz_1,\dots,\bz_{i-1}, A_\pi \bz_i,\bz_{i+1},\dots,\bz_k)
\end{align*}
which shows that $\mathcal{T}$ given in \eqref{eq:int_repr_lyap_eq} solves \eqref{eq:Lyapunov2} for $\bz_i \in\mD(A^2)$. By density  of $\mD(A^2)$ in $\mD(A)$, the equation remains valid for $\bz_i \in \mD(A)$ by continuity.

\emph{Uniqueness of $\mT$.} The uniqueness can be shown with the same arguments provided in \cite{BreKP19} and is therefore skipped at this point.
% For this purpose, let  and consider
% \begin{align*}
% f(t) = \mathcal{F}(A_0(\tilde{\bz}_j(t),\tilde{\bz}_{j+i}(t)),\tilde{\bz}_1(t),\dots,\tilde{\bz}_j(t),\tilde{\bz}_{j+1}(t),\dots,\tilde{\bz}_{j+i-1},\dots, \tilde{\bz}_{j+i+1},\dots, \tilde{\bz}_k ),
% \end{align*}
% where as before we denote $\tilde{\bz}_m(t):=e^{A_\pi t} \bz_m$.
\end{proof}

\begin{remark}
 In the estimates of the previous proof it was used in \eqref{eq:lyap_aux2} and \eqref{eq:lyap_aux3} that only one coordinate belongs to $V'$ and the others are in $V$.
\end{remark}

With regard to a result similar to \eqref{eq:Lyapunov2} but for a different right hand side in \eqref{eq:Lyapunov2}, consider $\mathcal{F} \in S_{i+1}(V,V')$. For $\bz_1,\dots,\bz_i \in V$, the definition of $S_{i+1}(V,V')$ yields that $\mathcal{F}(\cdot,\bz_1,\dots,\bz_i) \in \mathcal{L}(V',\mathbb R)$. We particularly have that $\mathcal{F}(\cdot,\bz_1,\dots,\bz_i) \in \mathcal{L}(Y,\mathbb R)$. Identifying the last term with its Riesz representative in $Y$, we can define $B^* \mathcal{F}(\cdot,\bz_1,\dots,\bz_i)\in U$. With the same technique used in the proof Theorem \ref{thm:existenceUniquenessLyapunov_1}, we obtain the following result.

\begin{theorem}\label{thm:existenceUniquenessLyapunov_2}
Let $k\ge 3,i\in \{2,\dots,k-2\}, \mathcal{F}_1\in \mathcal{S}_{i+1}(V,V')$ and $\mathcal{F}_2 \in \mathcal{S}_{k-i+1}(V,V')$. Then, there exists a unique $\mathcal{T} \in S_k(V,V')$ such that for all $(\bz_1,\dots,\bz_k) \in \mD(A)^k$,
\begin{align}\label{eq:Lyapunov3}
\sum_{i=1}^k \mT(\bz_1,\dots,\bz_{i-1},A_\pi \bz_i,\bz_{i+1},\dots,\bz_k ) =\mathcal{G}(\bz_1,\dots,\bz_k),
\end{align}
 where:
\begin{equation*}
\mathcal{G}(\bz_1,\dots,\bz_k)
=  \langle B^* \mathcal{F}_1 (\cdot,\bz_1,\dots,\bz_i), B^* \mathcal{F}_2 (\cdot,\bz_{i+1},\dots,\bz_k) \rangle_U.
\end{equation*}
\end{theorem}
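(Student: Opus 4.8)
The plan is to mimic the proof of Theorem \ref{thm:existenceUniquenessLyapunov_1} essentially verbatim, with the only nontrivial change being the verification that the right-hand side $\mathcal{G}$ produces a function of the form $\mathcal{G}(\tilde{\bz}_1,\dots,\tilde{\bz}_k)$ that is integrable on $(0,\infty)$ in the $S_k(V,V')$-sense, i.e. with one coordinate placed in $V'$ and the remaining ones in $V$. As before, I would \textbf{define} the candidate solution by the integral representation
\begin{equation*}
\mathcal{T}(\bz_1,\dots,\bz_k) = -\int_0^\infty \langle B^* \mathcal{F}_1(\cdot, \tilde{\bz}_1,\dots,\tilde{\bz}_i), B^* \mathcal{F}_2(\cdot, \tilde{\bz}_{i+1},\dots,\tilde{\bz}_k) \rangle_U \dd t, \quad \tilde{\bz}_m(t) := e^{A_\pi t}\bz_m,
\end{equation*}
and then carry out the three steps: well-posedness ($\mathcal{T}\in S_k(V,V')$), verification that $\mathcal{T}$ solves \eqref{eq:Lyapunov3}, and uniqueness.

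For \emph{well-posedness}, fix $\ell \in \{1,\dots,k\}$, take $\bz_\ell \in V'$ and $\bz_m \in V$ for $m\neq \ell$, and estimate the integrand. WLOG $\ell \le i$ (the case $\ell > i$ is symmetric). Using that $\mathcal{F}_1 \in S_{i+1}(V,V')$ and $B \in \mathcal{L}(U,Y)$, we have $\|B^*\mathcal{F}_1(\cdot,\tilde{\bz}_1,\dots,\tilde{\bz}_i)\|_U \le M \|\tilde{\bz}_\ell\|_{V'}\prod_{m\le i,\, m\neq\ell}\|\tilde{\bz}_m\|_V$, while $\|B^*\mathcal{F}_2(\cdot,\tilde{\bz}_{i+1},\dots,\tilde{\bz}_k)\|_U \le M\prod_{m>i}\|\tilde{\bz}_m\|_V$. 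Pulling out the $L^\infty(0,\infty;V)$ and $L^\infty(0,\infty;V')$ bounds on all but two of the trajectories — say $\tilde{\bz}_\ell$ (which sits in $W_\infty(Y,[\mD(A_\lambda^*)]')\hookrightarrow L^\infty(0,\infty;V')$ by the exponential stability of $e^{A_\pi t}$) and one index $m_0 \neq \ell$ — one is left with an integral $\int_0^\infty \|\tilde{\bz}_\ell(t)\|_{V'}\|\tilde{\bz}_{m_0}(t)\|_V \dd t$, which is finite since $\tilde{\bz}_\ell \in L^2(0,\infty;Y)$ and $\tilde{\bz}_{m_0}\in L^2(0,\infty;V)$ (Cauchy--Schwarz). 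Hence the integral converges absolutely and $|\mathcal{T}(\bz_1,\dots,\bz_k)| \le M\|\bz_\ell\|_{V'}\prod_{m\neq\ell}\|\bz_m\|_V$; since $\ell$ was arbitrary, $\mathcal{T}\in S_k(V,V')$.

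For \emph{solvability}, I would proceed exactly as in Theorem \ref{thm:existenceUniquenessLyapunov_1}: take $(\bz_1,\dots,\bz_k)\in\mD(A^2)^k$, set $g(t) := \langle B^*\mathcal{F}_1(\cdot,\tilde{\bz}_1,\dots,\tilde{\bz}_i), B^*\mathcal{F}_2(\cdot,\tilde{\bz}_{i+1},\dots,\tilde{\bz}_k)\rangle_U = \widehat{\mathcal{G}}(\widehat{g}(t))$ where $\widehat{\mathcal{G}}$ bundles the two $B^*\mathcal{F}_j$-maps composed with $A_\pi^{-1}$ on the differentiated slots and $\widehat{g}(t)$ is the corresponding $e^{A_\pi t}$-orbit in $V^k$. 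Since $\mathcal{F}_j \in S_\bullet(V,V')$ one checks $\widehat{\mathcal{G}}\in\mathcal{M}(V^k,\R)$ (losing regularity from $V'$ to $Y$ on the two differentiated coordinates exactly as in the cited proof), and $\widehat{g}\in W^{1,1}(0,\infty;V^k)$ because $A_\pi e^{A_\pi\cdot}A_\pi\bz_m\in L^1(0,\infty;V)$ by \cite[Theorem II 6.13(c)]{Paz83}. Then \cite[Lemma 9]{BreKP19} gives $g\in W^{1,1}(0,\infty)$ with $g'(t) = D\widehat{\mathcal{G}}(\widehat{g}(t))\widehat{g}'(t) = \sum_{m=1}^k \mathcal{G}(\tilde{\bz}_1,\dots,A_\pi\tilde{\bz}_m,\dots,\tilde{\bz}_k)$, and $g(t)\to 0$ along a sequence (and hence, by $g'\in L^1$, as $t\to\infty$); integrating $g(0) = -\int_0^\infty g'(t)\dd t$ yields \eqref{eq:Lyapunov3} on $\mD(A^2)^k$, whence on $\mD(A)^k$ by density and continuity. \emph{Uniqueness} follows verbatim as in \cite{BreKP19}. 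The main obstacle — and the only place the argument genuinely differs from Theorem \ref{thm:existenceUniquenessLyapunov_1} — is the bookkeeping in the well-posedness step: one must check that in \emph{every} distribution of the $V'$-slot among the coordinates (whether it lands in the $\mathcal{F}_1$-block or the $\mathcal{F}_2$-block), exactly one $V'$-factor and one $V$-factor are left inside the time integral after extracting the $L^\infty$-in-time bounds, so that the $L^2$--$L^2$ pairing closes; this is the analogue of the remark following Theorem \ref{thm:existenceUniquenessLyapunov_1} that only one coordinate is allowed in $V'$.
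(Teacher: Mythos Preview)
Your overall strategy is exactly the paper's: define $\mathcal{T}$ by the integral representation, then run the three steps of Theorem~\ref{thm:existenceUniquenessLyapunov_1}. The solvability and uniqueness parts go through as you describe. However, there is a genuine gap in your well-posedness estimate.

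You claim the pointwise bound
\[
\|B^*\mathcal{F}_1(\cdot,\tilde{\bz}_1,\dots,\tilde{\bz}_i)\|_U \le M\,\|\tilde{\bz}_\ell\|_{V'}\prod_{m\le i,\,m\neq\ell}\|\tilde{\bz}_m\|_V,
\]
but this does \emph{not} follow from $\mathcal{F}_1\in S_{i+1}(V,V')$. Applying $B^*$ forces the Riesz representative of $\mathcal{F}_1(\cdot,\tilde{\bz}_1,\dots,\tilde{\bz}_i)$ to lie in $Y$, i.e.\ the first slot must accept arguments from $Y\subset V'$; simultaneously you are placing $\tilde{\bz}_\ell$ in $V'$. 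That is \emph{two} weak slots at once, whereas $S_{i+1}(V,V')$ only provides one. Equivalently, the linear map $\bz_\ell\mapsto \mathcal{F}_1(\cdot,\dots,\bz_\ell,\dots)$ lies in $\mathcal{L}(V,Y)\cap\mathcal{L}(V',V')$, which does not imply membership in $\mathcal{L}(V',Y)$.

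The repair is an interpolation step you omit: from $\mathcal{F}_1\in\mathcal{M}(V'\times V^i,\R)$ and $\mathcal{F}_1\in\mathcal{M}(V\times V^{\ell-1}\times V'\times V^{i-\ell},\R)$, bilinear interpolation (with $[V,V']_{1/2}=Y$) yields $\mathcal{F}_1\in\mathcal{M}(Y\times V^{\ell-1}\times Y\times V^{i-\ell},\R)$, hence
\[
\|B^*\mathcal{F}_1(\cdot,\tilde{\bz}_1,\dots,\tilde{\bz}_i)\|_U \le M\,\|\tilde{\bz}_\ell\|_{Y}\prod_{m\le i,\,m\neq\ell}\|\tilde{\bz}_m\|_V.
\]
Now use $\tilde{\bz}_\ell\in L^2(0,\infty;Y)$ with $\|\tilde{\bz}_\ell\|_{L^2(0,\infty;Y)}\le M\|\bz_\ell\|_{V'}$ (rather than the pointwise $V'$-bound), pair it via Cauchy--Schwarz with $\|B^*\mathcal{F}_2(\cdot,\tilde{\bz}_{i+1},\dots,\tilde{\bz}_k)\|_{L^2(0,\infty;U)}\le M\prod_{m>i}\|\bz_m\|_V$ (which holds since $k-i\ge 2$ guarantees at least one $L^2(0,\infty;V)$ factor on that side), and the integral closes exactly as you intended. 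This is precisely the ``only one coordinate in $V'$'' subtlety flagged in the remark after Theorem~\ref{thm:existenceUniquenessLyapunov_1}, manifesting here through the extra weak slot consumed by $B^*$.
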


For what follows,  let us briefly recall a symmetrization technique introduced in \cite{BreKP19}. Let $i$ and $j \in \mathbb{N}$, consider
\begin{equation*}
S_{i,j}= \big\{ \sigma \in S_{i+j} \,|\, \sigma(1)< \dots < \sigma(i) \text{ and }
\sigma(i+1) < \dots < \sigma(i+j) \big \},
\end{equation*}
where $S_{i+j}$ is the set of permutations of $\{ 1,\dots,i+j \}$.
A permutation $\sigma \in S_{i,j}$ is uniquely defined by the subset
$\{\sigma(1),\dots,\sigma(i) \}$, therefore, the cardinality of $S_{i,j}$ is equal
to the number of subsets of cardinality $i$ of $\{ 1,\dots,i+j\}$ and, hence, $
|S_{i,j}|= \begin{pmatrix} i+j \\ i \end{pmatrix}.$
For a multilinear form $\mathcal{T}$ of order $i+j$, we set
\begin{equation} \label{eq:defSym}
\text{Sym}_{i,j}(\mathcal{T})(\bz_1,\dots,\bz_{i+j}) = \begin{pmatrix} i+j \\ i
\end{pmatrix}^{-1} \Big[ \sum_{\sigma \in S_{i,j}} \mathcal{T}
(\bz_{\sigma(1)},\dots,\bz_{\sigma(i+j)} ) \Big].
\end{equation}

\begin{theorem} \label{thm:mult_lin_form}
There exists a unique sequence of symmetric multilinear forms $(\mT_k)_{k \geq 2,}$ with $\mT_k \in
\mathcal{S}_k(V,V')$ and a unique sequence of multilinear forms $(\mR_{k})_{k \geq 3},$ with $\mR_{k} \in
\mM(\mD(A)^k,\R)$ such that for all $(\bz_1,\bz_2) \in V^2$,
\begin{equation} \label{eqLyapounov3a}
\mathcal{T}_2(\bz_1,\bz_2):= \langle \bz_1, \Pi \bz_2 \rangle_Y
\end{equation}
and such that for  all $k \geq 3$, for all $(\bz_1,...,\bz_k) \in \mathcal{D}(A)^k$,
% \begin{subequations} \label{eqLyapounov3}
\begin{equation}\label{eqLyapunov3b}
\sum_{i=1}^k \mathcal{T}_k (\bz_1,...,\bz_{i-1},A_\pi \bz_i, \bz_{i+1},...,\bz_k)=
\mathcal{R}_{k}(\bz_1,...,\bz_k),
\end{equation}
where
\begin{equation}\label{eq:aux1}
\begin{aligned}
& \mathcal{R}_{k}(\bz_1,\dots,\bz_k)= \frac{1}{2\alpha}\sum_{i=2}^{k-2} \begin{pmatrix} k \\ i \end{pmatrix}
\Symm\limits_{i,k-i} \big( \mathcal{C}_i \otimes
\mathcal{C}_{k-i} \big)(\bz_1,\dots,\bz_k) \\
&\hspace{1.5cm}
%+ \frac{k(k-1)}{2} \Symm\limits_{k-2,2} \left( \mathcal{T}_{k-1} \otimes  D^2F(0)\right)(\bz_1,\dots,\bz_k),
%+ \frac{k(k-1)}{2} \Symm_{k-2,2} \left( \mT_{k-1} \otimes A_0 \right)(\bz_1,\dots,\bz_k)
+ \sum_{j=1}^{k-1} \sum_{i=1}^{k-j} \mathcal{T}_{k-1} (A_0(\bz_j,\bz_{j+i}),\bz_1,\dots,\bz_{j-1},\bz_{j+1},\dots,\bz_{j+i-1},\bz_{j+i+1},\dots,\bz_k)
\end{aligned}
\end{equation}
with $
 \mathcal{C}_{i}(\bz_1,\dots,\bz_i) =  \displaystyle{B^* \mathcal{T}_{i+1}(\cdot,\bz_1,\dots,\bz_i)}$, and $\sum_{i=2}^r =0$ for $r<2$.
  %and
 %\begin{align*}
 %\frac{k(k-1)}{2} \Symm_{k-2,2} \left( \mT_{k-1} \otimes A_0 \right)(\bz_1,\dots,\bz_k)=
 %\end{align*}
 % and $D^2F(0)(\bz_1,\bz_2)=A_0(\bz_1,\bz_2)$.
% \end{subequations}
\end{theorem}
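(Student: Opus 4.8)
The plan is to construct the two sequences $(\mathcal{T}_k)_{k\ge 2}$ and $(\mathcal{R}_k)_{k\ge 3}$ simultaneously by induction on $k$, applying Theorems~\ref{thm:existenceUniquenessLyapunov_1} and~\ref{thm:existenceUniquenessLyapunov_2} at each step. For the base case $k=2$ we set $\mathcal{T}_2(\bz_1,\bz_2):=\langle\bz_1,\Pi\bz_2\rangle_Y$. Since $\Pi$ is the nonnegative \emph{self-adjoint} stabilizing solution of~\eqref{eq:are}, $\mathcal{T}_2$ is symmetric; moreover the $V$-coercivity~\eqref{eq:VY-coerc} entails the classical regularity $\Pi\in\mathcal{L}(V)$ of the Riccati operator, hence, by self-adjointness, $\Pi\in\mathcal{L}(V')$, so that $\mathcal{T}_2\in\mathcal{S}_2(V,V')$ by the discussion preceding Theorem~\ref{thm:existenceUniquenessLyapunov_1}. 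Now fix $k\ge 3$ and assume that symmetric forms $\mathcal{T}_\ell\in\mathcal{S}_\ell(V,V')$ for $2\le\ell\le k-1$ and forms $\mathcal{R}_\ell\in\mathcal{M}(\mathcal{D}(A)^\ell,\R)$ for $3\le\ell\le k-1$ have been constructed. Define $\mathcal{R}_k$ by~\eqref{eq:aux1}; its right-hand side involves only $\mathcal{T}_{i+1}$ for $2\le i\le k-2$ (through the $\mathcal{C}_i$) and $\mathcal{T}_{k-1}$ (through the $A_0$-terms), all available by the inductive hypothesis (for $k=3$ the control part of~\eqref{eq:aux1} is empty).

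The first task is to verify that $\mathcal{R}_k\in\mathcal{M}(\mathcal{D}(A)^k,\R)$. For a summand of the $A_0$-part, read the slot $A_0(\bz_j,\bz_{j+i})$ of $\mathcal{T}_{k-1}\in\mathcal{S}_{k-1}(V,V')$ in $V'$ and the remaining arguments in $V$, and combine $\|A_0(\bz_j,\bz_{j+i})\|_{V'}\le M\|\bz_j\|_{\bbH^2(\Omega)}\|\bz_{j+i}\|_Y$ from Proposition~\ref{prop:N_estimates_2}(ii) with the embeddings of $\mathcal{D}(A)$ into $\bbH^2(\Omega)$, $V$ and $Y$. For a summand of the control part, $\mathcal{T}_{i+1}\in\mathcal{S}_{i+1}(V,V')$ gives $\mathcal{T}_{i+1}(\cdot,\bz_1,\dots,\bz_i)\in\mathcal{L}(V',\R)$ with norm bounded by $M\prod_{m\le i}\|\bz_m\|_V$, so its Riesz representative in $Y$ satisfies the same bound; hence $\mathcal{C}_i\in\mathcal{M}(V^i,U)$, $\mathcal{C}_i\otimes\mathcal{C}_{k-i}\in\mathcal{M}(V^k,\R)\subset\mathcal{M}(\mathcal{D}(A)^k,\R)$, and the partial symmetrization $\Symm_{i,k-i}$ preserves this. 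Summing over the finitely many terms of~\eqref{eq:aux1} yields $\mathcal{R}_k\in\mathcal{M}(\mathcal{D}(A)^k,\R)$.

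Next we solve~\eqref{eqLyapunov3b}. Write $\mathcal{R}_k=\mathcal{G}^{\mathrm{nl}}+\mathcal{G}^{\mathrm{ctr}}$, with $\mathcal{G}^{\mathrm{nl}}$ the $A_0$-part and $\mathcal{G}^{\mathrm{ctr}}$ the control part. The form $\mathcal{G}^{\mathrm{nl}}$ is exactly the right-hand side $\mathcal{G}$ of Theorem~\ref{thm:existenceUniquenessLyapunov_1} with $\mathcal{F}=\mathcal{T}_{k-1}$, which hence supplies a unique solution $\mathcal{T}^{\mathrm{nl}}\in\mathcal{S}_k(V,V')$ of~\eqref{eqLyapunov3b}. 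For $\mathcal{G}^{\mathrm{ctr}}$, each unsymmetrised summand $\mathcal{C}_i\otimes\mathcal{C}_{k-i}$, $i\in\{2,\dots,k-2\}$, is precisely the right-hand side of Theorem~\ref{thm:existenceUniquenessLyapunov_2} with $\mathcal{F}_1=\mathcal{T}_{i+1}$ and $\mathcal{F}_2=\mathcal{T}_{k-i+1}$, giving a unique solution $\mathcal{T}^{(i)}\in\mathcal{S}_k(V,V')$; and since the solution operator is given by the integral formula~\eqref{eq:int_repr_lyap_eq}, which manifestly commutes with permutations of the $k$ input vectors, the solution associated with $\Symm_{i,k-i}(\mathcal{C}_i\otimes\mathcal{C}_{k-i})$ is $\Symm_{i,k-i}(\mathcal{T}^{(i)})$. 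By linearity of~\eqref{eqLyapunov3b} in the unknown,
\[
\mathcal{T}_k:=\mathcal{T}^{\mathrm{nl}}+\frac{1}{2\alpha}\sum_{i=2}^{k-2}\binom{k}{i}\Symm_{i,k-i}\big(\mathcal{T}^{(i)}\big)\in\mathcal{S}_k(V,V')
\]
solves~\eqref{eqLyapunov3b} with right-hand side $\mathcal{R}_k$, and in fact $\mathcal{T}_k(\bz_1,\dots,\bz_k)=-\int_0^\infty\mathcal{R}_k(e^{A_\pi t}\bz_1,\dots,e^{A_\pi t}\bz_k)\,\mathrm{d}t$. Uniqueness of $\mathcal{T}_k$ in $\mathcal{S}_k(V,V')$ reduces to the statement, already established in the proofs of Theorems~\ref{thm:existenceUniquenessLyapunov_1}--\ref{thm:existenceUniquenessLyapunov_2}, that the homogeneous equation $\sum_{i=1}^k\mathcal{T}(\bz_1,\dots,A_\pi\bz_i,\dots,\bz_k)=0$ has only the trivial solution in $\mathcal{S}_k(V,V')$; and $\mathcal{R}_k$, being given explicitly by~\eqref{eq:aux1} in terms of $\mathcal{T}_2,\dots,\mathcal{T}_{k-1}$, is then uniquely determined too. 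Finally, $\mathcal{R}_k$ is symmetric — the $A_0$-part because $A_0$ is symmetric and $\mathcal{T}_{k-1}$ is symmetric, so each term depends only on the unordered pair $\{j,j+i\}$, and the control part because each $\mathcal{C}_i$ is symmetric (as $\mathcal{T}_{i+1}$ is), so each $\Symm_{i,k-i}(\mathcal{C}_i\otimes\mathcal{C}_{k-i})$ is fully symmetric — whence the integral representation of $\mathcal{T}_k$ shows $\mathcal{T}_k$ is symmetric. This closes the induction.

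I expect the only genuine obstacle to be the combinatorial bookkeeping underlying the first two tasks: identifying the terms in~\eqref{eq:aux1} with the precise shapes of the right-hand sides admitted by Theorems~\ref{thm:existenceUniquenessLyapunov_1} and~\ref{thm:existenceUniquenessLyapunov_2}, and verifying $\mathcal{R}_k\in\mathcal{M}(\mathcal{D}(A)^k,\R)$ through the mapping properties of $N$, $A_0$, $B^*$ and the Riesz identification entering the definition of $\mathcal{C}_i$. The substance — existence, the $\mathcal{S}_k(V,V')$-regularity, uniqueness, and symmetry — is inherited from the two Lyapunov theorems already proved, together with the permutation covariance of their common integral solution formula.
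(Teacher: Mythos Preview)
Your approach is essentially the paper's: induction on $k$, with the induction step driven by Theorems~\ref{thm:existenceUniquenessLyapunov_1} and~\ref{thm:existenceUniquenessLyapunov_2} applied to the two pieces of $\mathcal{R}_k$, and symmetry read off from the integral representation~\eqref{eq:int_repr_lyap_eq} together with symmetry of $\mathcal{R}_k$. The paper also records the identity
\[
\sum_{j=1}^{k-1}\sum_{i=1}^{k-j}\mathcal{T}_{k-1}(A_0(\bz_j,\bz_{j+i}),\dots)=\tfrac{k(k-1)}{2}\,\Symm_{k-2,2}(\mathcal{T}_{k-1}\otimes A_0)(\bz_1,\dots,\bz_k),
\]
which makes the symmetry of the $A_0$-part explicit; your verbal argument (``depends only on the unordered pair $\{j,j+i\}$'' plus symmetry of $\mathcal{T}_{k-1}$) amounts to the same thing.

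The one point where you diverge is the base case $k=2$. You assert that ``the $V$-coercivity~\eqref{eq:VY-coerc} entails the classical regularity $\Pi\in\mathcal{L}(V)$''; this is true but not an immediate consequence of coercivity, and would need a precise citation on smoothing properties of algebraic Riccati operators in the analytic-semigroup setting. The paper instead rewrites~\eqref{eq:are} as the closed-loop Lyapunov equation and reuses the integral formula: with $\tilde\bz_i=e^{A_\pi t}\bz_i$ one gets
\[
|\mathcal{T}_2(\bz_1,\bz_2)|\le M\|\tilde\bz_1\|_{L^2(0,\infty;Y)}\|\tilde\bz_2\|_{L^2(0,\infty;Y)}\le M\|\bz_1\|_{V'}\|\bz_2\|_{V'},
\]
which even yields $\mathcal{T}_2\in\mathcal{M}(V'\times V',\R)$, stronger than $\mathcal{S}_2(V,V')$, and is fully self-contained. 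You may want to substitute this for your appeal to ``classical regularity''.
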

\begin{proof}
The statement follows by induction over $k$. We begin with $k=2$. By definition and well-known results for linear quadratic control problems, see, e.g., \cite{CurZ95,LasT00}, we obtain that $\mT_2 \in \mathcal{M}(Y\times Y,\mathbb R)$. Moreover, the operator $\Pi$ is the unique stabilizing solution to the algebraic Riccati equation. Hence, $\mathcal{T}_2$ is unique. Let us show that $\mT_2 \in S_2(V,V')$. For this purpose, note that the Riccati equation \eqref{eq:are} can be rewritten as a Lyapunov equation for the closed-loop system:
\begin{align*}
 \langle \bz_{2},A_\pi^{*} \bz_{1 } \rangle_{Y} + \langle  A_\pi\bz_{1},\bz_{2}\rangle _{Y}+\langle \bz_{2},\bz_{1} \rangle_{Y} + \frac{1}{\alpha} \langle B^{*} \Pi \bz_{1},B^{*}\Pi \bz_{2} \rangle_{U} = 0, \ \ \forall \bz_1,\bz_2 \in \mD(A).
\end{align*}
Similar as in the proof of Theorem \ref{thm:existenceUniquenessLyapunov_1}, we have the explicit integral representation
\begin{align*}
 \mT_2(\bz_1,\bz_2) = -\int_0^\infty \langle \tilde{\bz}_{2},\tilde{\bz}_{1} \rangle_{Y} + \frac{1}{\alpha} \langle B^{*} \Pi \tilde{\bz}_{1},B^{*}\Pi \tilde{\bz}_{2} \rangle_{U} \, \mathrm{d}t,
\end{align*}
where $\tilde{\bz}_i(t) = e^{A_\pi t} \bz_i,i=1,2$. This implies the estimate
\begin{align*}
 | \mT_2(\bz_1,\bz_2)| & \le M \int_0^\infty \| \tilde{\bz}_1\| _Y \| \tilde{\bz}_2\|_Y \, \mathrm{d}t \le M \| \tilde{\bz}_1\|_{L^2(0,\infty;Y)} \| \tilde{\bz}_2\|_{L^2(0,\infty;Y)} \le M \| \bz_1\|_{V'} \|\bz_2\|_{V'}.
\end{align*}
We thus have $\mT_2\in \mM(V'\times V',\mathbb R)$ which in particular implies $\mT_2\in S_2(V,V')$.

For $k\ge 3$, the equations \eqref{eqLyapunov3b} are linear and existence and uniqueness of $\mT_k \in S_k(V,V')$ and $\mR \in \mathcal{M}(\mD(A)^k,k)$ follow from Theorem \ref{thm:existenceUniquenessLyapunov_1} and Theorem \ref{thm:existenceUniquenessLyapunov_2}. Symmetry follows from the explicit integral representation of $\mathcal{T}_k$ as well as from the symmetry of $\mathcal{R}_k$ which is a consequence of the relation
\begin{align*}
&\sum_{j=1}^{k-1} \sum_{i=1}^{k-j} \mT_{k-1} (A_0(\bz_j,\bz_{j+i}),\bz_1,\dots,\bz_{j-1},\bz_{j+1},\dots,\bz_{j+i-1},\bz_{j+i+1},\dots,\bz_k) \\
&\quad=\sum_{j=1}^{k-1} \sum_{i=1}^{k-j} \langle \mT_{k-1}(\cdot, \bz_1,\dots,\bz_{j-1},\bz_{j+1},\dots,\bz_{j+i-1},\bz_{j+i+1},\dots,\bz_k), A_0(\bz_j,\bz_{j+i})\rangle_Y \\
&\quad=\frac{k(k-1)}{2} \Symm_{k-2,2} \left( \mT_{k-1} \otimes A_0 \right)(\bz_1,\dots,\bz_k).
\end{align*}
%The existence of such a symmetric sequence $\mathcal{T}_k \in \mathcal{M}(V^k,\mathbb R)$ is a consequence of Theorem \ref{thm:DifferentiabilityImpliesLyapunov}. From the linearity of the equation,  Theorem \ref{thm:existenceUniquenessLyapunov_1} and Theorem \ref{thm:existenceUniquenessLyapunov_2}, we obtain that $\mathcal{T}_k \in \mathcal{S}_k(Y,V)$ and, moreover, the uniqueness of the sequence.
\end{proof}

\begin{remark}\label{rem:necessity}
We point out that it is essential to allow that $\mT_k \in S(V,V')$ rather than  $\mM(V^k,\R)$. In fact, since $B^{*}\in \mathcal{L}(Y,U)$ the first summands on the right hand side of \eqref{eq:aux1} would otherwise not be well-defined, though it would suffice to demand $\mT_k \in S(V,Y)$. Similarly the second summands would not be well-defined, since they contain the terms $A_0(\bz_i, \bz_{j+i})$.
\end{remark}

\begin{remark}\label{rem:symm_lyap}
	For the analysis of the polynomial feedback law below it will be convenient to note that for the special case $\bz_{i}=\by\in \mD(A)$ for $i=1,\dots,k$ with $k\ge 3$, we obtain
	\begin{align*}
		k\mT_{k}(A\by,\by,\dots,\by)&= \frac{1}{2\alpha} \sum_{i=1}^{k-1} \begin{pmatrix} k \\ i \end{pmatrix} \langle B^{*} \mT_{i+1}(\cdot,\by,\dots,\by),B^{*} \mT_{k-i+1}(\cdot,\by,\dots,\by) \rangle_{U} \\
		&\qquad + k(k-1) \mT_{k-1}(F(\by),\by,\dots,\by).
	\end{align*}
\end{remark}

\section{Polynomial feedback control}

Let us next analyze the polynomial feedback law
%$u_d$
%derived from the Taylor series approximation of the value function
%\begin{align*}
%\mV_d(\by) := \sum_{k=2}^d \frac{1}{k!}D^{k}\mV(0)(\by,\dots,\by),
%\end{align*}
%for a given $d \geq 2$. The feedback
 $u_d \colon V \rightarrow U$ obtained by
  %is obtained by
\begin{align}\label{eq:pol_feedback}
u_{d}(\by_d) =  - \frac{1}{\alpha} \sum_{k=2}^{d} \frac{1}{(k-1)!} B^{*} \mT_k(\cdot,\by_d,\dots,\by_d),
\end{align}
with $\mT_k$ given in \eqref{eqLyapounov3a} and \eqref{eqLyapunov3b}. We then obtain the following closed-loop system
\begin{equation} \label{eq:cls}
\dot{\by}_d = A \by_d - F(\by_d) + Bu_{d}(\by_d), \quad \by_d(0) = \by_0.
\end{equation}
%The open-loop control generated by $u_d$ (that is, the mapping $t \geq 0 \mapsto u_d(\by_d(t)) \in U$) will also be denoted $u_d$, without risk of confusion.

The subsequent proofs rely on local Lipschitz continuity estimates for the nonlinear part of the feedback law. It will therefore be convenient to introduce
\begin{align}\label{eq:nonl_feed}
 G_k(\by):=-\frac{1}{\alpha (k-1)!}BB^{*}\mT_k(\cdot,\by,\dots,\by),
\end{align}
for each $k \geq 3$ such that we have
\begin{equation}\label{eq:loc_stab_cls}
\begin{aligned}
\dot{\by}_d
 &=A_\pi \by_d - F(\by_d) -\frac{1}{\alpha} \sum_{k=3}^d \frac{1}{(k-1)!}BB^{*}\mT_k(\cdot,\by_d,\dots,\by_d) \\
 &= A_{\pi} \by_d - F(\by_d) + \sum_{k=3}^{d} G_{k}(\by_d).
%, \quad \by_p(0)= \by_0.
\end{aligned}
\end{equation}

As mentioned before, for $\mT_k\in S_k(V,V')$ and for fixed $\by \in V$, the term $\mT_k(\cdot,\by,\dots,\by) \in \mL(Y,\mathbb R)$. With $g_k(\by,\dots,\by) \in Y$ let us denote its Riesz representative. Hence, we obtain
\begin{align*}
\|g_k(\by,\dots,\by)\|_Y = \sup_{\bz \in B_Y(1)} |\mT_k(\bz,\by,\dots,\by)| \le M   \|\by\|^{k-1}_V.
  \end{align*}
One can easily show the following local Lipschitz estimate for $G_k$ which extends the 2-D result given in \cite{BreKP19b}.

%\begin{proposition}\label{prop:Lipschitz_G}
 \begin{lemma}\label{lem:Lipschitz_G}
 For all $k\ge 3$, there exists a constant $C(k) >0$ such that for all $\by$ and $\bz \in V$,
 \begin{align*}
 \| G_k(\by)-G_k(\bz) \| _ Y \le C(k) \| \by - \bz \| _V \max(\| \by \| _V , \| \bz \| _V)^{k-2}.
 \end{align*}
 Moreover, for all $\delta \in [0,1]$, for all $\tilde{\by}$ and $\tilde{\bz} \in W_\infty(\mD(A_\lambda),Y)$ such that    $\|\tilde{\by}\|_{W_\infty(\mD(A_\lambda),Y)}\le \delta$ and $\|\tilde{\bz}\|_{W_\infty}(\mD(A_\lambda),Y)\le \delta$,
  \begin{align*}
   \|G_k(\tilde{\by})-G_k(\tilde{\bz})\|_{L^2(0,\infty;Y)} \le C(k)\delta\| \tilde{\by}-\tilde{\bz}\|_{W_\infty(\mD(A_\lambda),Y)}.
  \end{align*}
\end{lemma}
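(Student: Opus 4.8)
The plan is to prove the two Lipschitz estimates for $G_k$ by exploiting the explicit multilinear structure in \eqref{eq:nonl_feed} together with the boundedness of $\mT_k$ as an element of $\mathcal{S}_k(V,V')$, hence in particular $\mT_k(\cdot,\by,\dots,\by)\in\mathcal{L}(Y,\R)$ for fixed $\by\in V$.

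\emph{Step 1: Pointwise estimate on $V$.} Fix $\by,\bz\in V$. Writing the difference $G_k(\by)-G_k(\bz)$ via the multilinear form, I would use the standard telescoping identity
\begin{equation*}
\mT_k(\cdot,\by,\dots,\by) - \mT_k(\cdot,\bz,\dots,\bz) = \sum_{j=1}^{k-1} \mT_k(\cdot,\underbrace{\by,\dots,\by}_{j-1},\by-\bz,\underbrace{\bz,\dots,\bz}_{k-1-j}),
\end{equation*}
so that, since $\frac{1}{\alpha(k-1)!}BB^*\in\mathcal{L}(Y)$ and $\mT_k\in\mathcal{S}_k(V,V')$ gives $\|\mT_k(\cdot,v_1,\dots,v_{k-1})\|_{Y}\le \|\mathcal{T}_k\|\prod\|v_i\|_V$ (identifying the $Y$-Riesz representative as done just before the lemma), each of the $k-1$ summands is bounded by $M\|\by-\bz\|_V\,\max(\|\by\|_V,\|\bz\|_V)^{k-2}$. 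Collecting the $k-1$ terms gives the first claim with $C(k)=\frac{k-1}{\alpha(k-1)!}\|BB^*\|_{\mathcal{L}(Y)}\|\mathcal{T}_k\|$, up to the generic constant $M$.

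\emph{Step 2: Time-dependent estimate.} For $\tilde\by,\tilde\bz\in W_\infty(\mD(A_\lambda),Y)$ with norms bounded by $\delta\le 1$, I would first recall the continuous embedding $W_\infty(\mD(A_\lambda),Y)\hookrightarrow L^\infty(0,\infty;V)$ coming from $[\mD(A_\lambda),Y]_{1/2}=V$ (Consequence \ref{cons:C2}), so that $\|\tilde\by\|_{L^\infty(0,\infty;V)}\le M\delta$ and likewise for $\tilde\bz$. Applying the pointwise bound of Step 1 at a.e.\ $t$,
\begin{equation*}
\|G_k(\tilde\by(t))-G_k(\tilde\bz(t))\|_Y \le C(k)\,\|\tilde\by(t)-\tilde\bz(t)\|_V\,\max(\|\tilde\by(t)\|_V,\|\tilde\bz(t)\|_V)^{k-2},
\end{equation*}
and then I would pull the $L^\infty$-in-time factor $\max(\|\tilde\by\|_{L^\infty(0,\infty;V)},\|\tilde\bz\|_{L^\infty(0,\infty;V)})^{k-2}\le (M\delta)^{k-2}\le M\delta$ (using $\delta\le 1$ and $k\ge 3$) out of the $L^2(0,\infty)$-norm, leaving $\|\tilde\by-\tilde\bz\|_{L^2(0,\infty;V)}\le\|\tilde\by-\tilde\bz\|_{W_\infty(\mD(A_\lambda),Y)}$. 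Absorbing constants into $C(k)$ yields the second claim.

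\emph{Main obstacle.} There is no deep difficulty here; the only points requiring care are (i) correctly invoking $\mT_k\in\mathcal{S}_k(V,V')$ — actually only $\mT_k\in\mathcal{S}_k(V,Y)$ is needed, so that $\mT_k(\cdot,\by,\dots,\by)$ has a well-defined $Y$-Riesz representative with norm $\lesssim\|\by\|_V^{k-1}$, exactly as stated in the paragraph preceding the lemma — and (ii) making sure the $k-2$ surplus powers of the $V$-norm are handled in $L^\infty$-in-time (not $L^2$) so that the factor $\delta^{k-2}\le\delta$ can be extracted while keeping an $L^2$-in-time control of the difference. Since $k\ge 3$, there is at least one surplus power, so this extraction is always possible; the case $k=2$ (which is excluded) would behave differently because $G_2$ is linear.
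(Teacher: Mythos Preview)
Your proposal is correct and follows the natural approach; the paper itself does not give a proof but simply remarks that the estimate ``can easily [be] show[n]'' and ``extends the 2-D result given in \cite{BreKP19b}'', which is precisely the telescoping-plus-embedding argument you describe. The only minor cosmetic point is that in Step~2 you should write $(M\delta)^{k-2}=M^{k-2}\delta^{k-2}\le M^{k-2}\delta$ and absorb $M^{k-2}$ into $C(k)$, rather than writing $(M\delta)^{k-2}\le M\delta$, but you already indicate this absorption.
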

%\begin{proof}
%We have the identity
%\begin{align*}
%& \mT_k(\cdot, \by,\dots,\by) - \mT_k(\cdot, \bz,\dots,\bz)
 %= \mT_k(\cdot, \by - \bz, \by,\dots,\by) \\
%&  \qquad \qquad \qquad + \mT_k(\cdot, \bz, \by - \bz,\dots,\by)
%+ \dots
%+ \mT_k(\cdot, \bz,\dots,\bz, \by- \bz).
%\end{align*}
%The first inequality easily follows, with $C(k)= \frac{C}{\alpha (k-2)!} \| B B^*   \|_Y$. We also obtain that for all $\by$ and $\bz \in W_\infty(\mD(A),Y)$,
%\begin{equation*}
%\| G_k(\by)-G_k(\bz) \|_{L^2(0,\infty;Y)}
%\leq C(k) \| \by - \bz \|_{W_\infty(\mD(A),Y)} \max( \| \by \|_{W_\infty(\mD(A),Y)}, \| \bz \|_{W_\infty(\mD(A),Y)} )^{k-2}.
%\end{equation*}
%The second inequality follows, since $k \geq 3$ and $\delta \leq 1$.
%\end{proof}

As a consequence, we obtain the local well-posedness of the closed-loop system.
%The well-posedness of the closed-loop system can be now established with the same tools as those used in Lemma \ref{lem:eq:non_loc_sol}.
\begin{theorem} \label{thm:cls_well_posed}
  Let $C$ and $C(k)$ denote the constants from Corollary \ref{cor:F_Lip} and Lemma \ref{lem:Lipschitz_G}. There exists a constant $M_{\mathrm{cls}}>0$ such that for all $\by_0 \in V$ with
  \begin{align*}
   \| \by _0\| _V   \le \frac{1}{4(C+\sum_{k=3}^dC(k)M_{\mathrm{cls}}^2)}
  \end{align*}
  the closed-loop system \eqref{eq:cls} has a unique solution $\by_d \in W_\infty(\mD(A_\lambda),Y)$, which satisfies
  \begin{equation} \label{eq:estimate_cls}
\| \by_d\|_{W_\infty(\mD(A_\lambda),Y)} \leq 2 M_{\mathrm{cls}} \| \by_0 \|_V.
\end{equation}
\end{theorem}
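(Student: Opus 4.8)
\emph{Proof idea.} The plan is to solve the closed-loop system in the equivalent form \eqref{eq:loc_stab_cls} by a Banach fixed-point argument in $W_\infty(\mD(A_\lambda),Y)$, in complete analogy with Lemma~\ref{lem:non_loc_sol} and Corollary~\ref{cor:feas_control}. First I would note that $A_\pi = A-\frac{1}{\alpha}BB^*\Pi$ differs from $A_\lambda$ only by the bounded operator $\lambda I-\frac{1}{\alpha}BB^*\Pi\in\mathcal{L}(Y)$, so that $\mD(A_\pi)=\mD(A_\lambda)$ with equivalent graph norms; since $e^{A_\pi t}$ is an exponentially stable analytic semigroup on $Y$, the maximal regularity estimate of the type \eqref{eq:reg_est_shift} applies, i.e.\ there is $M_{\mathrm{cls}}\ge 1$ such that for every $(\by_0,\bff)\in V\times L^2(0,\infty;Y)$ the problem $\dot\by=A_\pi\by+\bff$, $\by(0)=\by_0$, has a unique solution $\by\in W_\infty(\mD(A_\lambda),Y)$ with $\|\by\|_{W_\infty(\mD(A_\lambda),Y)}\le M_{\mathrm{cls}}(\|\by_0\|_V+\|\bff\|_{L^2(0,\infty;Y)})$. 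Using in addition the embeddings $W_\infty(\mD(A_\lambda),Y)\hookrightarrow C_b([0,\infty);V)$ (from $[\mD(A_\lambda),Y]_{1/2}=V$) and $\mD(A_\lambda)\hookrightarrow\bbH^2(\Omega)$, I define $\Psi(\bw)=\by$, where $\by$ solves $\dot\by=A_\pi\by-F(\bw)+\sum_{k=3}^d G_k(\bw)$, $\by(0)=\by_0$; a fixed point of $\Psi$ is precisely a solution of \eqref{eq:loc_stab_cls}, hence of \eqref{eq:cls}.

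Next I would show $\Psi$ maps the closed ball $\mathcal{B}_\rho=\{\bw\in W_\infty(\mD(A_\lambda),Y):\|\bw\|_{W_\infty(\mD(A_\lambda),Y)}\le\rho\}$ into itself for $\rho:=2M_{\mathrm{cls}}\|\by_0\|_V$; the smallness hypothesis on $\|\by_0\|_V$ forces $\rho\le 1$. Since $F$ is quadratic, $F(0)=0$, and Corollary~\ref{cor:F_Lip} with $\bz=0$ (the embedding constants absorbed) gives $\|F(\bw)\|_{L^2(0,\infty;Y)}\le C\rho^2$ for $\bw\in\mathcal{B}_\rho$; likewise each $G_k$ is homogeneous of degree $k-1\ge 2$, so $G_k(0)=0$, and the second estimate of Lemma~\ref{lem:Lipschitz_G} with $\tilde\bz=0$ and $\delta=\rho$ gives $\|G_k(\bw)\|_{L^2(0,\infty;Y)}\le C(k)\rho^2$. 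Consequently $\|\Psi(\bw)\|_{W_\infty(\mD(A_\lambda),Y)}\le M_{\mathrm{cls}}\|\by_0\|_V+M_{\mathrm{cls}}\bigl(C+\sum_{k=3}^d C(k)\bigr)\rho^2=\tfrac12\rho+M_{\mathrm{cls}}\bigl(C+\sum_{k=3}^d C(k)\bigr)\rho^2$, which is $\le\rho$ exactly when $M_{\mathrm{cls}}\bigl(C+\sum_{k=3}^d C(k)\bigr)\rho\le\tfrac12$, i.e.\ under the smallness bound on $\|\by_0\|_V$ in the statement (enlarging $M_{\mathrm{cls}}$ if necessary to swallow the embedding constants). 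Hence $\Psi(\mathcal{B}_\rho)\subseteq\mathcal{B}_\rho$.

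Then I would prove that $\Psi$ is a contraction on $\mathcal{B}_\rho$: for $\bw_1,\bw_2\in\mathcal{B}_\rho$ the difference $\Psi(\bw_1)-\Psi(\bw_2)$ solves a linear equation with zero initial data and right-hand side $-(F(\bw_1)-F(\bw_2))+\sum_{k=3}^d(G_k(\bw_1)-G_k(\bw_2))$, so by the maximal regularity estimate, Corollary~\ref{cor:F_Lip}, and the second part of Lemma~\ref{lem:Lipschitz_G} (both applied with $\delta=\rho\le 1$) one gets $\|\Psi(\bw_1)-\Psi(\bw_2)\|_{W_\infty(\mD(A_\lambda),Y)}\le M_{\mathrm{cls}}\bigl(C+\sum_{k=3}^d C(k)\bigr)\rho\,\|\bw_1-\bw_2\|_{W_\infty(\mD(A_\lambda),Y)}\le\tfrac12\|\bw_1-\bw_2\|_{W_\infty(\mD(A_\lambda),Y)}$ under the same smallness condition. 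Banach's fixed point theorem then yields a unique $\by_d\in\mathcal{B}_\rho$ with $\Psi(\by_d)=\by_d$, which is a solution of \eqref{eq:cls} satisfying \eqref{eq:estimate_cls}. Uniqueness among all of $W_\infty(\mD(A_\lambda),Y)$ (and not merely in $\mathcal{B}_\rho$) follows from a standard continuation argument: the finite-horizon analogue of the same contraction, run on $W(0,T;\mD(A_\lambda),Y)$ for small $T$, gives local-in-time uniqueness, and together with the continuity $W_\infty(\mD(A_\lambda),Y)\hookrightarrow C_b([0,\infty);V)$ this propagates coincidence of two solutions to all of $[0,\infty)$.

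The step I expect to demand the most care is not conceptual — all the machinery (maximal regularity for $A_\pi$ via the Riccati theory, the Lipschitz estimate for $F$ in Corollary~\ref{cor:F_Lip}, the Lipschitz estimates for the $G_k$ in Lemma~\ref{lem:Lipschitz_G}) is already available — but rather the bookkeeping needed to combine the self-mapping and contraction thresholds, together with the several embedding constants, into the single smallness condition on $\|\by_0\|_V$ displayed in the statement. The two structural facts making this possible are that $F(0)=0$ and $G_k(0)=0$ (so the nonlinear forcing vanishes at $\bw=0$ and contributes genuinely quadratically in $\rho$), and that $\rho\le 1$, which is exactly what permits Lemma~\ref{lem:Lipschitz_G} to be invoked with $\delta=\rho$.
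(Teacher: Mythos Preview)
Your existence argument via a Banach fixed point in $W_\infty(\mD(A_\lambda),Y)$, using maximal regularity for $A_\pi$ together with Corollary~\ref{cor:F_Lip} and Lemma~\ref{lem:Lipschitz_G}, is exactly the route the paper takes: it simply says ``similar to the proof of Lemma~\ref{lem:non_loc_sol}'' and moves on. So on existence and the bound \eqref{eq:estimate_cls} you match the paper.

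The one genuine difference is in the global uniqueness step. You propose a continuation argument (local-in-time uniqueness via a short-time contraction, then propagation using $W_\infty(\mD(A_\lambda),Y)\hookrightarrow C_b([0,\infty);V)$). The paper instead follows the uniqueness part of the proof of Lemma~\ref{lem:non_loc_sol}: for two solutions $\by,\bz\in W_\infty(\mD(A_\lambda),Y)$ it derives a differential inequality for $\|\be(t)\|_Y^2$ with $\be=\by-\bz$, namely
\[
\tfrac12\tfrac{\dd}{\dd t}\|\be(t)\|_Y^2 \le M\Big(1+\|\by(t)\|_{\bbH^2}^2+\|\bz(t)\|_{\bbH^2}^2+\sum_{k=3}^d C(k)^2\max(\|\by(t)\|_V,\|\bz(t)\|_V)^{2(k-2)}\Big)\|\be(t)\|_Y^2,
\]
and concludes $\be\equiv 0$ by Gronwall, the bracket being integrable because $\by,\bz\in W_\infty(\mD(A_\lambda),Y)$. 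Both arguments are correct; the Gronwall route is shorter and uses only the pointwise Lipschitz estimate in Lemma~\ref{lem:Lipschitz_G}, whereas your continuation argument, to actually produce a contraction on $W(0,T;\mD(A_\lambda),Y)$ for arbitrary (possibly large) solutions, needs the alternative splitting of the $N$-estimates (placing the $L^2(0,T;\bbH^2)$-norm on the known solution rather than on the difference) so that the Lipschitz constant becomes small with $T$. That works, but it is more bookkeeping than the energy/Gronwall argument the paper uses.
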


\begin{proof}
Similar to the proof of Lemma \ref{lem:non_loc_sol}, we obtain the existence of a solution
%The existence of a solution
 $\by \in W_\infty(\mD(A_\lambda),Y)$, satisfying \eqref{eq:estimate_cls}.
%, can be obtained exactly as in Lemma \ref{lem:non_loc_sol}.
Let us therefore focus on uniqueness and denote by $\by$ and $\bz$ two solutions to \eqref{eq:cls} in $W_\infty(\mD(A_\lambda),Y)$. We set $\mathbf{e}= \by - \bz$. Again, as in the proof of Lemma \ref{lem:non_loc_sol}, there exists  $M>0$ such that
\begin{equation*}
\frac{1}{2} \frac{\text{d}}{\text{d} t} \| \mathbf{e}(t) \|_Y^2
\leq M \Big( 1 + \| \by(t) \|_{\bbH^2(\Omega)}^2+ \| \bz(t) \|_{\bbH^2(\Omega)}^2 +
\sum_{k=3}^d C(k)^2 \max ( \| \by(t) \|_V, \| \bz(t) \|_V )^{2(k-2)} \Big) \| \mathbf{e}(t) \|_Y^2,
\end{equation*}
for all $t \geq 0$.
Since $\by$ and $\bz \in W_\infty(\mD(A_\lambda),Y)$ and $\mathbf{e}(0)=0$, we obtain with Gronwall's inequality that $\mathbf{e}=0$.
%, which proves the uniqueness of the solution to the closed-loop system.
\end{proof}

\section{Error estimates}

In this section, we analyze the feedback law \eqref{eq:pol_feedback} and compare it to the optimal value  $\mathcal{V}(\by_0)$. We follow a strategy used in \cite{BreKP19} which is based on a polynomial function $\mV_d$ of the form
\begin{align}\label{eq:poly_val_func}
\mV_d\colon V \to \mathbb R, \ \ \mV_d(\by):=\sum_{k=2}^d \frac{1}{k!} \mT_k(\by,\dots,\by).
\end{align}
The motivation for the specific definition of $\mV_d$ is that in the 2-D case, the sequence of multilinear forms $\mT_k$ coincides with derivatives $D^k\mV(0)$ of the value function considered as continuous multilinear forms on $Y^k$. Hence, in that case the expression for $\mV_d$ represents a Taylor series expansion of $\mV$ around 0 in the $Y$ topology. In the 3-D case we utilize the structure of the 2-D case to propose approximating feedback controls based on the generalized Lyapunov equations from Theorem \ref{thm:mult_lin_form}. Using that $\mathcal{V}$ is differentiable on $V$, we can eventually derive error estimates analogous  to those obtained in \cite{BreKP19b}.

%\subsection{Estimates for the velocity}

%\texttt{Notation $\by^{\otimes i}=(\by_i,\dots,\by_i)$ not introduced yet.\\}

We begin by showing that $\mV_d$ as defined in \eqref{eq:poly_val_func} satisfies a perturbed HJB equation. For this purpose, consider the following polynomial term for $\by \in V$:
%For a feedback $u_d$ of order $d$ and the associated closed-loop trajectory $\by_d$ satisfying \eqref{eq:cls}, let us introduce the following  polynomial term:
\begin{equation} \label{eq:poly_val_func}
\begin{aligned}
r_d(\by)&:= \frac{1}{2\alpha}\sum_{k=d+1}^{2d-2} \sum_{\ell=k-d+1}^{d-1}\frac{1}{\ell!(k-\ell)!}  \left\langle  B^{*} \mT_{\ell+1}(\cdot,\by,\dots,\by) ,B^{*} \mT_{k-\ell+1}(\cdot,\by,\dots,\by)  \right\rangle_{U}.
 \\ &\qquad
 + \frac{1}{(d-1)!}\mT_{d}(F(\by),\by,\dots,\by).
\end{aligned}
\end{equation}
Before stating the announced result, note that $\mV_d$ is Fr\'{e}chet differentiable on $V$ with
\begin{align}\label{eq:poly_val_der}
D\mV_d(\by)  = \sum_{k=2}^d \frac{1}{(k-1)!} \mT_k (\cdot,\by,\dots,\by).
\end{align}
Moreover, by Theorem \ref{thm:mult_lin_form}, we know that $D\mV_d$ can be uniquely extended to an element in $\mathcal{L}(Y,\mathbb R)$. As an element in $\mathcal{L}(Y,\mathbb R)$ it satisfies the announced perturbed HJB equation.

\begin{proposition}\label{prop:resi_hjb}
For all $d \geq 2$ and all $\by \in \mathcal{D}(A)$, we have
\begin{equation} \label{eq:resi_hjb1}
	\begin{aligned}
  D\mathcal{V}_d(\by)(A\by-F(\by)) +
\frac{1}{2} \| \by \|_Y^2 - \frac{1}{2\alpha }
\langle B^{*}D\mathcal{V}_d(\by),B^{*}D\mathcal{V}_d(\by)
\rangle_{U}+r_d(\by)=0 .
\end{aligned}
\end{equation}
Moreover, for all $d \geq 2$, there exists a constant $C>0$ such that for all $\by
\in V$,
\begin{equation} \label{eq:resi_hjb2}
|r_d(\by)| \leq C \sum_{i=d+1}^{2d}  \| \by \|_{V}^{i}.
\end{equation}
\end{proposition}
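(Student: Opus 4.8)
The plan is to derive the perturbed Hamilton--Jacobi--Bellman identity \eqref{eq:resi_hjb1} by a purely algebraic manipulation that feeds the diagonal form of the Lyapunov equations (Remark \ref{rem:symm_lyap}) and the algebraic Riccati equation \eqref{eq:are} into the expansion of $D\mV_d(\by)(A\by)$, and then to read off the bound \eqref{eq:resi_hjb2} from the growth estimates for $\mT_k(\cdot,\by,\dots,\by)$ recorded before Lemma \ref{lem:Lipschitz_G} together with Proposition \ref{prop:N_estimates_2}(i). Throughout I abbreviate $c_\ell(\by):=B^{*}\mT_{\ell+1}(\cdot,\by,\dots,\by)\in U$, which is meaningful for $\by\in V$ since $\mT_{\ell+1}\in\mathcal{S}_{\ell+1}(V,V')$ gives $\mT_{\ell+1}(\cdot,\by,\dots,\by)\in\mathcal{L}(Y,\mathbb{R})$; by \eqref{eqLyapounov3a} one has $c_1(\by)=B^{*}\Pi\by$, and by \eqref{eq:poly_val_der} together with the extension of $D\mV_d$ to $\mathcal{L}(Y,\mathbb{R})$ one has $B^{*}D\mV_d(\by)=\sum_{j=1}^{d-1}\tfrac{1}{j!}c_j(\by)$.

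First I would expand $D\mV_d(\by)(A\by)=\sum_{k=2}^{d}\tfrac{1}{(k-1)!}\mT_k(A\by,\by,\dots,\by)$ for $\by\in\mD(A)$. For $k=2$, evaluating \eqref{eq:are} at $\bz_1=\bz_2=\by$, using self-adjointness of $\Pi$ and $\mD(A)=\mD(A^{*})$, gives $\mT_2(A\by,\by)=\langle A\by,\Pi\by\rangle_Y=-\tfrac12\|\by\|_Y^2+\tfrac{1}{2\alpha}\|c_1(\by)\|_U^2$. For $k\ge 3$, Remark \ref{rem:symm_lyap} reads $k\,\mT_k(A\by,\by,\dots,\by)=\tfrac{1}{2\alpha}\sum_{i=1}^{k-1}\binom{k}{i}\langle c_i(\by),c_{k-i}(\by)\rangle_U+k(k-1)\mT_{k-1}(F(\by),\by,\dots,\by)$; dividing by $(k-1)!$, using $\tfrac{1}{k!}\binom{k}{i}=\tfrac{1}{i!(k-i)!}$, summing over $k$ and reindexing the lower order terms by $m=k-1$ (the $k=2$ contribution supplying exactly the $(k,i)=(2,1)$ summand) yields
\[
D\mV_d(\by)(A\by) = -\frac{1}{2}\|\by\|_Y^2 + \frac{1}{2\alpha}\sum_{k=2}^{d}\sum_{i=1}^{k-1}\frac{1}{i!(k-i)!}\langle c_i(\by),c_{k-i}(\by)\rangle_U + \sum_{m=2}^{d-1}\frac{1}{(m-1)!}\mT_m(F(\by),\by,\dots,\by).
\]
Next I would match the last two sums. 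Expanding $\langle B^{*}D\mV_d(\by),B^{*}D\mV_d(\by)\rangle_U=\sum_{j,\ell=1}^{d-1}\tfrac{1}{j!\ell!}\langle c_j(\by),c_\ell(\by)\rangle_U$ and collecting by $k=j+\ell$ shows that its $2\le k\le d$ portion is precisely the double sum in the display, while its $d+1\le k\le 2d-2$ portion is the first sum in the definition of $r_d$. Likewise $\sum_{m=2}^{d-1}\tfrac{1}{(m-1)!}\mT_m(F(\by),\by,\dots,\by)=D\mV_d(\by)(F(\by))-\tfrac{1}{(d-1)!}\mT_d(F(\by),\by,\dots,\by)$, the subtracted term being the second summand of $r_d$. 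Substituting both into the display and rearranging gives \eqref{eq:resi_hjb1}; as a consistency check, for $d=2$ the identity collapses to \eqref{eq:are} evaluated at $(\by,\by)$.

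Finally, for \eqref{eq:resi_hjb2} I would estimate the two pieces of $r_d$ separately. From the bound $\|g_{\ell+1}(\by,\dots,\by)\|_Y\le M\|\by\|_V^{\ell}$ recorded before Lemma \ref{lem:Lipschitz_G} it follows that $\|c_\ell(\by)\|_U\le M\|\by\|_V^{\ell}$, hence $|\langle c_\ell(\by),c_{k-\ell}(\by)\rangle_U|\le M\|\by\|_V^{k}$, so the first part of $r_d$ is bounded by $C\sum_{k=d+1}^{2d-2}\|\by\|_V^{k}$. For the remaining term, $\mT_d\in\mathcal{S}_d(V,V')\subset\mM(V'\times V^{d-1},\mathbb{R})$ gives $|\mT_d(F(\by),\by,\dots,\by)|\le M\|F(\by)\|_{V'}\|\by\|_V^{d-1}$, and $\|F(\by)\|_{V'}=\|N(\by,\by)\|_{V'}\le M\|\by\|_V^{2}$ by Proposition \ref{prop:N_estimates_2}(i), so this term is $\le C\|\by\|_V^{d+1}$; enlarging the summation range to $\{d+1,\dots,2d\}$ yields \eqref{eq:resi_hjb2}. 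I expect the only genuinely delicate point to be the combinatorial bookkeeping in the matching step --- aligning the coefficients $\tfrac{1}{i!(k-i)!}$ produced by Remark \ref{rem:symm_lyap} with those appearing in $\langle B^{*}D\mV_d(\by),B^{*}D\mV_d(\by)\rangle_U$ and splitting the $k$-sums correctly at $k=d$; there is no analytic obstacle, since all the well-posedness and $\mathcal{S}_k(V,V')$-regularity of the $\mT_k$ that make $c_\ell(\by)$ and $\mT_d(F(\by),\cdot)$ meaningful has already been supplied by Theorem \ref{thm:mult_lin_form}.
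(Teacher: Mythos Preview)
Your proposal is correct and follows essentially the same approach as the paper: both arguments expand $D\mV_d(\by)(A\by)$ via the Riccati identity \eqref{eq:are} for $k=2$ and the diagonal Lyapunov relation of Remark \ref{rem:symm_lyap} for $k\ge 3$, then match against the expansion of $\langle B^{*}D\mV_d(\by),B^{*}D\mV_d(\by)\rangle_U$ and the $F$-terms, with the residual pieces collected into $r_d$; the estimate \eqref{eq:resi_hjb2} is obtained identically from $\mT_k\in\mathcal S_k(V,V')$ and Proposition \ref{prop:N_estimates_2}(i). The only organizational difference is that the paper computes the full left-hand side of \eqref{eq:resi_hjb1} and shows it vanishes term by term, whereas you first isolate a closed formula for $D\mV_d(\by)(A\by)$ and then substitute --- the combinatorics are the same.
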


\begin{proof}
Let us prove \eqref{eq:resi_hjb1}. We fix $\by \in \mathcal{D}(A)$. Since  $\mathcal{T}_2$ is characterized by $\Pi$ which satisfies the algebraic Riccati equation \ref{eq:are}, we obtain for $d=2$
\begin{align*}
& -  D\mathcal{V}_d(\by)(A\by-F(\by)) -
\frac{1}{2} \| \by \|_Y^2 + \frac{1}{2\alpha } \langle B^{*} D\mV_{d}(\by) ,B^{*} D\mV_{d}(\by)\rangle_{U} \\
& \qquad = -\mathcal{T}_2(A\by-F(\by),\by)  - \frac{1}{2}\| \by \|_Y^2  + \frac{1}{2\alpha } \langle B^{*} \mT_{2}(\cdot,\by),B^{*} \mT_{2}(\cdot,\by)\rangle_{U} \\
& \qquad = \underbrace{-\mathcal{T}_2(A\by,\by) - \frac{1}{2} \| \by \|_Y^2 + \frac{1}{2\alpha}  \langle B^{*} \mT_{2}(\cdot,\by),B^{*} \mT_{2}(\cdot,\by)\rangle_{U} }_{= 0} + \mT_{2}(F(\by),\by) =  r_2(\by).
\end{align*}

Now let $d\geq 3$. Our proof is based on Theorem \ref{thm:mult_lin_form}. From Remark \ref{rem:symm_lyap}, we know that the expressions of the multilinear forms can be simplified when the mappings are evaluated at $(\by,\dots,\by)\in Y^{i}$ and $(\by,\dots,\by)\in Y^{k}$, respectively. In particular, we have
\begin{equation} \label{eq:rpProof3}
\begin{aligned}
k \mathcal{T}_k(A\by, \by,\dots,\by)&= \frac{1}{2\alpha} \sum_{i=1}^{k-1} \begin{pmatrix} k \\ i \end{pmatrix}\langle B^{*} \mT_{i+1}(\cdot,\by,\dots,\by),B^{*}\mT_{k-i+1}(\cdot,\by,\dots,\by ) \rangle_{U}\\
&\qquad+k(k-1)\mT_{k-1}(F(\by),\by,\dots,\by )
\end{aligned}
\end{equation}
We are now ready to prove \eqref{eq:resi_hjb1}.
By \eqref{eq:poly_val_der}  we have
\begin{equation} \label{eqRPExpansion1}
D\mathcal{V}_d(\by) (A\by-F(\by)) = \sum_{k=2}^d \frac{1}{(k-1)!} \mathcal{T}_k \big( A\by, \by,\dots,\by \big)-\sum_{k=3}^{d+1} \frac{1}{(k-2)!}\mathcal{T}_{k-1} \big( F(\by), \by,\dots,\by \big),
\end{equation}
and in a similar manner
%From \eqref{eq:poly_val_der} it follows that
\begin{align*}
B^{*}D\mathcal{V}_d(\by) = & \ \sum_{i=2}^d \frac{1}{(i-1)!} B^{*} \mathcal{T}_i(\cdot, \by,\dots,\by) =\sum_{i=1}^{d-1} \frac{1}{i!} B^{*} \mathcal{T}_{i+1}(\cdot, \by,\dots,\by).
\end{align*}
As a consequence, we obtain
\begin{equation} \label{eqRPExpansion0}
\begin{aligned}
&\langle B^{*} D\mathcal{V}_d(\by),B^{*} D\mathcal{V}_d(\by) \rangle_{U} =
\left\langle \sum_{i=1}^{d-1} \frac{1}{i!} B^{*} \mT_{i+1}(\cdot,\by,\dots,\by),\sum_{j=1}^{d-1} \frac{1}{j!} B^{*} \mT_{j+1}(\cdot,\by,\dots,\by) \right\rangle_{U} \\
&\qquad = \langle B^{*} \mT_{2}(\cdot,\by),B^{*}\mT_{2}(\cdot,\by) \rangle_{U} + \sum_{k=3}^{d} \sum_{\ell=1}^{k-1}\frac{1}{\ell! (   k-\ell)!} \left \langle  B^{*} \mT_{\ell+1}(\cdot,\by,\dots,\by), B^{*} \mT_{ k-\ell+1}(\cdot,\by,\dots,\by) \right\rangle_{U} \\
&\qquad \qquad + \sum_{k=d+1}^{2d-2}\sum_{\ell=k-d+1}^{d-1}\frac{1}{\ell! (   k-\ell)!} \left \langle  B^{*} \mT_{\ell+1}(\cdot,\by,\dots,\by),  B^{*} \mT_{ k-\ell+1}(\cdot,\by,\dots,\by) \right\rangle_{U}.
\end{aligned}
\end{equation}
From \eqref{eqRPExpansion1} and \eqref{eqRPExpansion0}, we conclude that
\begin{align*}
& - D\mathcal{V}_d(\by)(A\by-F(\by)) -
\frac{1}{2} \| \by \|_Y^2 + \frac{1}{2\alpha }
\left \langle B^{*}D\mV_{d}(\by),B^{*}D\mV_{d}(\by) \right \rangle_{U} \\
& \quad =  -\frac{1}{2} \left[ 2 \mathcal{T}_2(A_\pi \by, \by) + \| \by \|_Y^2 - \frac{1}{\alpha} \langle B^{*}\mT_2(\cdot,\by),B^{*}\mT_2(\cdot,\by) \rangle_{U} \right]\\
& \qquad   - \sum_{k=3}^d \frac{1}{k!} \Bigg[ k \mathcal{T}_k(A \by, \by,\dots,\by) -  k(k-1)\mT_{k-1}(F(\by),\by,\dots,\by) \\
&\qquad \qquad \qquad\left.-\frac{1}{2\alpha}\sum_{\ell=1}^{k-1} \begin{pmatrix} k \\ \ell \end{pmatrix}\left \langle   B^{*} \mT_{\ell+1}(\cdot,\by,\dots,\by),  B^{*} \mT_{ k-\ell+1}(\cdot,\by,\dots,\by) \right\rangle_{U}\right] \\
&\qquad   + \sum_{k=d+1}^{2d-2}\sum_{\ell=k-d+1}^{d-1} \frac{1}{\ell!(   k-\ell)!}  \left \langle B^{*} \mT_{\ell+1}(\cdot,\by,\dots,\by), B^{*} \mT_{ k-\ell+1}(\cdot,\by,\dots,\by) \right\rangle_{U} \\
&\qquad +\frac{1}{(d-1)!} \mT_{d}(F(\by),\by,\dots,\by).
\end{align*}
The terms in brackets in the above expression are equal to zero by \eqref{eq:are} and \eqref{eq:rpProof3}.
This proves \eqref{eq:resi_hjb1}. For the estimate \eqref{eq:resi_hjb2}, we use $\mT_k \in S_k(V,V')$ and the definition \eqref{eq:poly_val_func} to obtain
\begin{align*}
| r_d(\by)| &\le M \sum_{k=d+1}^{2d-2} \sum_{\ell =k-d+1}^{d-1} \frac{1}{\ell! (k-\ell)!} \| \mT_{\ell+1}(\cdot,\by,\dots,\by)\|_Y \| \mT_{k-\ell+1}(\cdot,\by,\dots,\by)\|_Y \\
&\quad +M | \mT_d (F(\by),\by,\dots,\by) | \\
& \le M\left(\sum_{k=d+1}^{2d-2} \sum_{\ell =k-d+1}^{d-1}  \|\by\|_V^k   + \| F(\by)\|_{V'} \| \by\|_V^{d-1} \right).
\end{align*}
The assertion now follows with Proposition \ref{prop:N_estimates_2}.
\end{proof}

\begin{lemma} \label{lemma:estimateRp}
Let $d \geq 2.$ Then, there exists $\delta>0$ and a constant $M>0$ such that for
all $\by_0 \in B_V(\delta_0)$,
\begin{equation*}
\int_0^\infty r_d ( \bar{\by}(t))\, \mathrm{d} t \leq C \| \by_0 \|_{V}^{d+1} \quad \text{and} \quad \int_0^\infty r_d (\by_d(t))\, \mathrm{d} t \leq C \| \by_0\|_{V}^{d+1},
\end{equation*}
where $\bar{\by}$ is the optimal trajectory for problem \eqref{eq:NLQprob} with
initial value $\by_0$.
\end{lemma}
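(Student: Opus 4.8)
The plan is to reduce the whole statement to the pointwise residual bound \eqref{eq:resi_hjb2}, namely $|r_d(\by)| \le C \sum_{i=d+1}^{2d}\|\by\|_V^i$, combined with the a priori $W_\infty(\mD(A_\lambda),Y)$-bounds already available for the two trajectories: $\|\bar{\by}\|_{W_\infty(\mD(A_\lambda),Y)} \le M\|\by_0\|_V$ from Lemma \ref{lem:ex_opt_sol} (valid for $\by_0 \in B_V(\delta_1)$), and $\|\by_d\|_{W_\infty(\mD(A_\lambda),Y)} \le 2M_{\mathrm{cls}}\|\by_0\|_V$ from Theorem \ref{thm:cls_well_posed} (valid once $\|\by_0\|_V$ is small enough). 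Accordingly, I will first fix $\delta_0 \in (0,1]$ small enough that $\delta_0 \le \delta_1$ and that the smallness hypothesis of Theorem \ref{thm:cls_well_posed} is met; since both estimates will then follow from the identical argument, it suffices to treat a generic trajectory $\by \in W_\infty(\mD(A_\lambda),Y)$ with $\|\by\|_{W_\infty(\mD(A_\lambda),Y)} \le \widetilde{M}\|\by_0\|_V$ for a constant $\widetilde{M}$ depending only on the data.

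The second step converts this spatial bound into an integrability statement in time. Since $[\mD(A_\lambda),Y]_{1/2}=V$ (Consequence \ref{cons:C2}), we have the continuous embeddings $W_\infty(\mD(A_\lambda),Y) \hookrightarrow L^\infty(0,\infty;V)$ and $W_\infty(\mD(A_\lambda),Y) \hookrightarrow L^2(0,\infty;V)$; in particular $\by(t)\in V$ for all $t\ge0$, so $t\mapsto r_d(\by(t))$ is well-defined and measurable. For every exponent $i \ge d+1 \ge 3$ I will split $\int_0^\infty \|\by(t)\|_V^i \,\mathrm{d}t \le \|\by\|_{L^\infty(0,\infty;V)}^{i-2}\int_0^\infty \|\by(t)\|_V^2\,\mathrm{d}t = \|\by\|_{L^\infty(0,\infty;V)}^{i-2}\,\|\by\|_{L^2(0,\infty;V)}^2$ and bound both factors by powers of $\|\by\|_{W_\infty(\mD(A_\lambda),Y)}$, hence by powers of $\|\by_0\|_V$, obtaining $\int_0^\infty \|\by(t)\|_V^i\,\mathrm{d}t \le (M\|\by_0\|_V)^i$. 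Feeding this into \eqref{eq:resi_hjb2} gives $\int_0^\infty |r_d(\by(t))|\,\mathrm{d}t \le C\sum_{i=d+1}^{2d}(M\widetilde{M}\|\by_0\|_V)^i$.

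The last step is cosmetic: because $\delta_0 \le 1$ we have $\|\by_0\|_V \le 1$, so $\|\by_0\|_V^i \le \|\by_0\|_V^{d+1}$ for each $i \ge d+1$, and the finite sum collapses to a single term $C'\|\by_0\|_V^{d+1}$ with $C'$ depending only on $d$. Applying the generic estimate once with $\by=\bar{\by}$ and once with $\by=\by_d$ then yields the two asserted inequalities.

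I do not expect a genuine obstacle here: the content is bookkeeping with the embedding $W_\infty(\mD(A_\lambda),Y)\hookrightarrow L^\infty(0,\infty;V)\cap L^2(0,\infty;V)$ and H\"older's inequality in time. The only points needing a little care are (a) arranging that a single $\delta_0$ serves both trajectories, which is immediate by taking the minimum of the two smallness thresholds, and (b) noting that the $i$-dependent constants $(M\widetilde{M})^i$ sum to something finite, which they do since at most $d$ terms occur. Note that the HJB identity \eqref{eq:resi_hjb1} itself is not needed for this lemma — only the residual estimate \eqref{eq:resi_hjb2} and the already-established regularity bounds on $\bar{\by}$ and $\by_d$.
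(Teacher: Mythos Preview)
Your proposal is correct and follows essentially the same approach as the paper: invoke the $W_\infty(\mD(A_\lambda),Y)$-bounds on $\bar{\by}$ and $\by_d$, apply the pointwise residual estimate \eqref{eq:resi_hjb2}, and use $\|\by_0\|_V\le 1$ to collapse the sum. The paper's proof is terser (it cites Proposition~\ref{proposition:UisOptimal} rather than Lemma~\ref{lem:ex_opt_sol} for the bound on $\bar{\by}$, and leaves the time-integration step implicit), but your explicit splitting $\int_0^\infty\|\by\|_V^i\,\mathrm{d}t\le \|\by\|_{L^\infty(0,\infty;V)}^{i-2}\|\by\|_{L^2(0,\infty;V)}^2$ via the embeddings $W_\infty(\mD(A_\lambda),Y)\hookrightarrow L^\infty(0,\infty;V)\cap L^2(0,\infty;V)$ is exactly the natural way to fill in that gap.
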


\begin{proof}
By Proposition \ref{proposition:UisOptimal} and Theorem \ref{thm:cls_well_posed}, for $\delta>0$ sufficiently small there exists a constant $C_1$ such
that for all $\by_0 \in B_V(\delta)$,
\begin{equation*}
\max\left(\| \by_d \|_{W_{\infty}(\mD(A_\lambda),Y)},\| \bar{\by} \|_{W_{\infty}(\mD(A_\lambda),Y)} \right) \leq C_1 \| \by_0 \|_V .
\end{equation*}
Since we can assume that $\| \by_0 \| _V \le 1$, the statement is a consequence of Proposition \ref{prop:resi_hjb}.
 %and the fact that
%\begin{align*}
	%\by_{d} \in L^{2}(0,\infty;V)\cap L^{\infty}(0,\infty;V) \text{ and } 	\bar{\by} \in L^{2}(0,\infty;V)\cap L^{\infty}(0,\infty;V).
%\end{align*}
\end{proof}

Let us now consider a perturbation $J_d$ of the cost function $J$ of the form
\begin{equation*}
J_d(\by,u):= \frac{1}{2} \int_0^\infty \| \by \|_Y^2\, \dd t +
\frac{\alpha}{2} \int_0^\infty \|u\|_{U}^2 \, \dd t + \int_0^\infty r_d(\by)\, \dd t.
\end{equation*}

Next, we show that the polynomial feedback law $u_{d}(\by_d)=-\frac{1}{\alpha} B^{*}D\mV_{d}(\by_{d})$ with the corresponding trajectory $\by_d$ performs better than $(\bar{\by},\bar{u})$ with regard to the perturbed cost function $J_d$.

\begin{lemma} \label{lemma:optimalityJp}
Let $d \geq 2.$ Then there exists $\delta >0$ such that for all initial values $\by_0 \in B_V(\delta_0)$
\begin{equation*}
\mathcal{V}_d(\by_0)= J_d(\by_d,{u}_d) \leq J_d(\bar{\by},\bar{u}) \ \
\end{equation*}
where $(\bar{\by},\bar{u})$ is the optimal solution for problem \eqref{eq:NLQprob} with
initial value $\by_0$.
%for all $u \in L^{2}(0,\infty;U)$ with $\by_u \in W_{\infty}(\mD(A_\lambda),Y)$.
\end{lemma}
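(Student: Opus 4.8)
The strategy is to show that $\mathcal{V}_d$ acts as a certificate of optimality for the perturbed problem, exploiting the perturbed HJB equation \eqref{eq:resi_hjb1} from Proposition \ref{prop:resi_hjb}. The first equality $\mathcal{V}_d(\by_0) = J_d(\by_d,u_d)$ will come from differentiating $t \mapsto \mathcal{V}_d(\by_d(t))$ along the closed-loop trajectory and integrating; the inequality $J_d(\by_d,u_d) \le J_d(\bar{\by},\bar{u})$ will come from the fact that along \emph{any} admissible trajectory $\mathcal{V}_d$ produces a lower bound, with equality precisely when the control is the feedback $-\frac{1}{\alpha}B^*D\mathcal{V}_d$.

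Concretely, I would first fix $\delta>0$ small enough (shrinking the $\delta$'s from Theorem \ref{thm:cls_well_posed}, Proposition \ref{proposition:UisOptimal}, and Lemma \ref{lemma:estimateRp}) so that for $\by_0 \in B_V(\delta)$ both $\by_d$ and $\bar\by$ lie in $W_\infty(\mD(A_\lambda),Y)$ with norm $\le C_1\|\by_0\|_V$, and in particular $\by_d(t),\bar\by(t)\to 0$ in $V$ as $t\to\infty$ (decay of the $W_\infty$ trajectory). Since $\mathcal{V}_d$ is a finite sum of bounded multilinear forms, $\mathcal{V}_d(\by_d(t))$ and $\mathcal{V}_d(\bar\by(t))$ are absolutely continuous in $t$ with $\mathcal{V}_d(\by_0)$ at $t=0$ and limit $0$ as $t\to\infty$; using the chain rule (justified as in Theorem \ref{thm:existenceUniquenessLyapunov_1}, via \cite[Lemma 9]{BreKP19}, since $\by_d,\bar\by \in W_\infty(\mD(A_\lambda),Y) \hookrightarrow L^2(0,\infty;\mD(A_\lambda))$ with derivative in $L^2(0,\infty;Y)$),
\begin{equation*}
-\mathcal{V}_d(\by_0) = \int_0^\infty D\mathcal{V}_d(\by(t))\big(A\by(t)-F(\by(t))+Bu(t)\big)\, \mathrm{d}t
\end{equation*}
for $(\by,u)$ equal to either $(\by_d,u_d)$ or $(\bar\by,\bar u)$. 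Here one must first establish this identity for smooth-in-time, $\mD(A)$-valued approximations and pass to the limit, using $D\mathcal{V}_d \in \mathcal{L}(Y,\mathbb{R})$ and the estimates of Proposition \ref{prop:N_estimates} and Corollary \ref{cor:N_time_estimates} to control the nonlinear term $D\mathcal{V}_d(\by)F(\by)$.

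Next I would substitute the perturbed HJB identity \eqref{eq:resi_hjb1}, valid pointwise in $t$ for $\by(t)\in\mD(A)$ (and extended to $V$-valued trajectories by density, since every term is continuous in the $V$-topology by Propositions \ref{prop:N_estimates_2} and \ref{prop:resi_hjb}), to rewrite
\begin{equation*}
D\mathcal{V}_d(\by)(A\by-F(\by)) = -\tfrac12\|\by\|_Y^2 + \tfrac{1}{2\alpha}\|B^*D\mathcal{V}_d(\by)\|_U^2 - r_d(\by).
\end{equation*}
Then, completing the square in the term $D\mathcal{V}_d(\by)(Bu) = \langle B^*D\mathcal{V}_d(\by), u\rangle_U$, one gets
\begin{equation*}
-\mathcal{V}_d(\by_0) = \int_0^\infty \Big[ -\tfrac12\|\by\|_Y^2 - \tfrac{\alpha}{2}\|u\|_U^2 - r_d(\by) + \tfrac{\alpha}{2}\big\| u + \tfrac1\alpha B^*D\mathcal{V}_d(\by)\big\|_U^2 \Big]\mathrm{d}t,
\end{equation*}
i.e.
\begin{equation*}
\mathcal{V}_d(\by_0) = J_d(\by,u) - \tfrac{\alpha}{2}\int_0^\infty \big\| u(t) + \tfrac1\alpha B^*D\mathcal{V}_d(\by(t))\big\|_U^2\, \mathrm{d}t.
\end{equation*}
Applying this with $(\by_d,u_d)$, where by construction $u_d = -\tfrac1\alpha B^*D\mathcal{V}_d(\by_d)$, the square term vanishes and we get $\mathcal{V}_d(\by_0) = J_d(\by_d,u_d)$. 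Applying it with the optimal pair $(\bar\by,\bar u)$, the square term is nonnegative, so $\mathcal{V}_d(\by_0) \le J_d(\bar\by,\bar u)$. Combining the two gives the claim.

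The main obstacle I anticipate is the rigorous justification of the fundamental-theorem-of-calculus identity for $t\mapsto\mathcal{V}_d(\by(t))$ together with the decay $\mathcal{V}_d(\by(t))\to 0$: one needs $\by \in W_\infty(\mD(A_\lambda),Y)$ to have a trace that decays in $V$ (or at least in $Y$) at infinity, and one needs the chain rule to hold despite $\mathcal{V}_d$ being only defined and differentiable on $V$ while the natural pairing $D\mathcal{V}_d(\by)(A\by-F(\by))$ involves $A\by \in L^2(0,\infty;Y)$ and $F(\by) \in L^2(0,\infty;Y)$. This is handled by approximating $\by$ by trajectories that are smooth in time with values in $\mD(A)$ (or $\mD(A^2)$), verifying the identity there by the classical chain rule, and passing to the limit using the boundedness of $D\mathcal{V}_d$ as an element of $\mathcal{L}(Y,\mathbb{R})$ (guaranteed by Theorem \ref{thm:mult_lin_form}, since $\mathcal{T}_k \in \mathcal{S}_k(V,V') \subset \mathcal{M}(Y^k,\mathbb{R})$-type bounds on the relevant slots) together with the continuity estimates for $F$ and $A_0$ from Section 2; the integrability of $r_d(\by(\cdot))$ needed to make $J_d$ finite is exactly Lemma \ref{lemma:estimateRp}.
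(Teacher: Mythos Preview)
Your proposal is correct and follows essentially the same route as the paper's proof: differentiate $t\mapsto\mathcal{V}_d(\by(t))$ via the chain rule (the paper cites \cite[Lemma~9]{BreKP19}), substitute the perturbed HJB identity \eqref{eq:resi_hjb1}, complete the square, and use the decay $\mathcal{V}_d(\by(T))\to 0$ to pass from $[0,T]$ to $[0,\infty)$. The only cosmetic difference is that the paper integrates on $[0,T]$ first and then lets $T\to\infty$, whereas you write the $\int_0^\infty$ identity directly.
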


\begin{proof}
By Lemma \ref{lemma:estimateRp}, it follows that $J_d(\bar{\by},\bar{u})$ and $J_d(\by_d,u_d)$ are finite. We have that $\bar{\by} \in H^1(0,\infty;Y)$ and, hence, for all $T>0$, it holds that $\bar{\by}  \in W^{1,1}(0,T;Y)$. We can apply a chain rule established in \cite{BreKP19} to each of the bounded multilinear forms which
appear  in $\mathcal{V}_d({\bar{\by}}(\cdot))$.
Omitting the time variable in what follows, we obtain
\begin{align*}
\frac{\dd}{\dd t} \mathcal{V}_d({\bar{\by}})= D\mathcal{V}_d({\bar{\by}}) \big( A {\bar{\by}} -F({\bar{\by}}) +B{\bar{u}} \big) .
%= D\mathcal{V}_p(y)(A \by) + \sum_{j=1}^mu_j D\mathcal{V}_p(y) (N_j y +B_j) .
\end{align*}
By Proposition \ref{prop:resi_hjb},
\begin{align*}
 \frac{\dd}{\dd t} \mathcal{V}_d({\bar{\by}})&=-r_d({\bar{\by}}) -\frac{1}{2} \| {{\bar{\by}}} \|_Y^2 +
\frac{1}{2\alpha} \langle B^{*}D\mV_{d}({\bar{\by}}),B^{*}D\mV_{d}({\bar{\by}}) \rangle_{U} + D\mathcal{V}_d ({\bar{\by}})(B\bar{u}) \\
%%%
&= -\ell_d({\bar{\by}},\bar{u}) + \frac{1}{2\alpha} \langle B^{*}D\mV_{d}({\bar{\by}}),B^{*}D\mV_{d}({\bar{\by}}) \rangle_{U} + D\mathcal{V}_d ({\bar{\by}})(B\bar{u}) + \frac{\alpha}{2}\| \bar{u}\|_{U}^{2},
\end{align*}
where $\ell_d({\by},{u}):= \frac{1}{2} \| \by \|_Y^2 + \frac{\alpha}{2}\|{u}\|_{U}^2 + r_d({\by}) .$
Hence, it follows that
\begin{align} \label{eq:verifTh1}
 \frac{\dd}{\dd t} \mathcal{V}_d({\bar{\by}})&= - \ell_d({\bar{\by}},\bar{u}) +
\frac{\alpha}{2} \left\| \bar{u} + \frac{1}{\alpha} B^{*}D\mathcal{V}_d({\bar{\by}})\right\|_{U}^{2}.
%= -\ell_p(y,u) + \frac{\alpha}{2}\sum_{j=1}^m \big( u_j- (\mathbf{u}_p(y))_j \big)^2.
\end{align}
We deduce that
\begin{equation} \label{eq:verifTh2a}
\mathcal{V}_d({\bar{\by}}(T)) - \mathcal{V}_d(\by_0) \geq - \int_0^T \ell_d({\bar{\by}},\bar{u}) \dd t.
\end{equation}
%for all $u \in L^{2}(0,\infty;U)$ with $\by_u \in W_{\infty}(\mD(A_\lambda),Y)$.
%We also deduce
With a similar derivation for $u=u_d$, we infer that
\begin{equation} \label{eq:verifTh2b}
\mathcal{V}_d(\by_d(T)) - \mathcal{V}_d(\by_0) = - \int_0^T
\ell_d(\by_d,u_d) \dd t,
\end{equation}
since for this control, the squared expression vanishes.
We have $\lim_{T\to \infty} \bar{\by}(T)=0 \text{ and } \lim_{T\to \infty}\by_d(T)=0 \text{ in } V.$ Since $\mT_{k} \in S_{k}(V,V')$, this implies that
\begin{equation*}
\mathcal{V}_d({\bar{\by}}(T)) \overset{T \to \infty}{\longrightarrow 0} \quad
\text{and} \quad
\mathcal{V}_d(\by_d(T)) \underset{T \to \infty}{\longrightarrow} 0.
\end{equation*}
Finally, passing to the limit in \eqref{eq:verifTh2a} and \eqref{eq:verifTh2b}, we obtain
\begin{equation*}
J_d(\bar{\by},\bar{u}) = \int_0^\infty \ell_d(\bar{\by},\bar{u}) \geq \mathcal{V}_d(\by_0) = \int_0^\infty \ell_d(\by_d,u_{d}) = J_d(\by_d,u_d).
\end{equation*}
The lemma is proved.
\end{proof}

We now prove that $\mathcal{V}_d$ is a Taylor expansion of $\mathcal{V}$ and analyze the quality of the feedback law $u_d$ in the neighborhood of 0.

\begin{theorem} \label{thm:subOptimality}
There exists $\delta>0$ and a constant $M>0$ such that for all $\by_0\in B_V(\delta)$
\begin{align}
&\mV(\by_0)\le J(\by_d,u_d) \leq \mathcal{V}(\by_0) + 2M \|\by_0\|_{V}^{d+1}, \label{eq:subOptimality1} \\
& | \mathcal{V}(\by_0)-\mathcal{V}_d(\by_0) | \leq M \|\by_0\|_{V}^{d+1}.
\label{eq:subOptimality2}
\end{align}
\end{theorem}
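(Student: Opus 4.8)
The plan is to assemble the theorem from the three facts already proved: the feasibility of $(\by_d,u_d)$ for \eqref{eq:NLQprob}, the near-optimality of $(\by_d,u_d)$ for the perturbed cost $J_d$ (Lemma~\ref{lemma:optimalityJp}), and the smallness of the remainder term $r_d$ along both trajectories (Lemma~\ref{lemma:estimateRp}). First I would fix $\delta>0$ smaller than every radius occurring in Proposition~\ref{proposition:UisOptimal}, Theorem~\ref{thm:cls_well_posed}, Lemma~\ref{lemma:estimateRp} and Lemma~\ref{lemma:optimalityJp}, and $\delta\le 1$, so that for $\by_0\in B_V(\delta)$ the optimal pair $(\bar{\by},\bar{u})$ of \eqref{eq:NLQprob} exists with $\|\bar{\by}\|_{W_\infty(\mD(A_\lambda),Y)}\le M\|\by_0\|_V$, the closed-loop trajectory $\by_d$ of \eqref{eq:cls} exists with $\|\by_d\|_{W_\infty(\mD(A_\lambda),Y)}\le 2M_{\mathrm{cls}}\|\by_0\|_V$, and both $\bigl|\int_0^\infty r_d(\bar{\by})\,\dd t\bigr|$ and $\bigl|\int_0^\infty r_d(\by_d)\,\dd t\bigr|$ are bounded by $M\|\by_0\|_V^{d+1}$ — here I use the absolute-value form of the estimate \eqref{eq:resi_hjb2}, which the proof of Lemma~\ref{lemma:estimateRp} already delivers, so that the remainder integrals are controlled from both sides.

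For the lower bound $\mV(\by_0)\le J(\by_d,u_d)$ I would observe that $\by_d\in W_\infty(\mD(A_\lambda),Y)$ together with $u_d(\by_d(\cdot))\in L^2(0,\infty;U)$ makes $(\by_d,u_d)$ admissible in \eqref{eq:NLQprob}; the membership $u_d(\by_d(\cdot))\in L^2(0,\infty;U)$ follows from $\|B^{*}\mT_k(\cdot,\by,\dots,\by)\|_U\le M\|\by\|_V^{k-1}$ and $\by_d\in L^\infty(0,\infty;V)\cap L^2(0,\infty;V)$. Since $\mV(\by_0)$ is the infimum over all admissible pairs, $\mV(\by_0)\le J(\by_d,u_d)$.

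For the upper bound I would chain
\[
J(\by_d,u_d)=J_d(\by_d,u_d)-\int_0^\infty r_d(\by_d)\,\dd t=\mV_d(\by_0)-\int_0^\infty r_d(\by_d)\,\dd t\le J_d(\bar{\by},\bar{u})-\int_0^\infty r_d(\by_d)\,\dd t,
\]
using the definition of $J_d$ and both statements of Lemma~\ref{lemma:optimalityJp}, and then $J_d(\bar{\by},\bar{u})=\mV(\by_0)+\int_0^\infty r_d(\bar{\by})\,\dd t$ to get
\[
J(\by_d,u_d)\le \mV(\by_0)+\int_0^\infty r_d(\bar{\by})\,\dd t-\int_0^\infty r_d(\by_d)\,\dd t\le \mV(\by_0)+2M\|\by_0\|_V^{d+1},
\]
which, with the lower bound, is \eqref{eq:subOptimality1}. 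Finally, \eqref{eq:subOptimality2} follows from $\mV_d(\by_0)=J(\by_d,u_d)+\int_0^\infty r_d(\by_d)\,\dd t$ and the triangle inequality,
\[
|\mV_d(\by_0)-\mV(\by_0)|\le |J(\by_d,u_d)-\mV(\by_0)|+\Bigl|\int_0^\infty r_d(\by_d)\,\dd t\Bigr|\le 2M\|\by_0\|_V^{d+1}+M\|\by_0\|_V^{d+1},
\]
after renaming the constant.

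Since every analytic difficulty has been quarantined in the earlier lemmas, only bookkeeping remains: choosing $\delta$ compatible with all cited results, checking that $u_d(\by_d(\cdot))\in L^2(0,\infty;U)$ so that $(\by_d,u_d)$ is genuinely admissible, and — the point I expect to be most easily overlooked — ensuring that $\int_0^\infty r_d(\by_d)\,\dd t$ is bounded in absolute value (both signs), not merely from above, which is why one should invoke \eqref{eq:resi_hjb2} rather than just the one-sided bound stated in Lemma~\ref{lemma:estimateRp}.
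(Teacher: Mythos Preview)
Your proposal is correct and follows essentially the same route as the paper: both arguments rest on the four facts $|\mV_d(\by_0)-J(\by_d,u_d)|\le M\|\by_0\|_V^{d+1}$, $\mV_d(\by_0)\le J_d(\bar{\by},\bar{u})$, $|\mV(\by_0)-J_d(\bar{\by},\bar{u})|\le M\|\by_0\|_V^{d+1}$, and $\mV(\by_0)\le J(\by_d,u_d)$, which the paper simply concatenates into a single chain of inequalities. Your version is slightly more explicit about admissibility of $(\by_d,u_d)$ and about needing the two-sided bound on $\int_0^\infty r_d$, and it yields $3M$ rather than $M$ in \eqref{eq:subOptimality2}, but after renaming the constant this is the same proof.
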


\begin{proof}
The following inequalities follow directly from Lemma \ref{lemma:estimateRp} and Lemma
\ref{lemma:optimalityJp}:
\begin{align*}
\begin{array}{ll}
|\mathcal{V}_d(\by_0) - J(\by_d,u_{d})| \leq M \| \by_0 \|_{V}^{d+1}, \phantom{\Big|} \qquad &
\mathcal{V}_d(\by_0) \leq J_d(\bar{\by},\bar{u}), \\
|\mathcal{V}(\by_0) - J_d(\bar{\by},\bar{u})| \leq M \| \by_0 \|_{V}^{d+1},
\phantom{\Big|}&
\mathcal{V}(\by_0) \leq J(\by_d,u_d),
\end{array}
\end{align*}
where $\bar{u}$ is the unique solution to \eqref{eq:NLQprob} with initial value $\by_0$.
Therefore,
\begin{align*}
& J(\by_d,u_d)-2 M \| \by_0 \|_{V}^{d+1}
\leq \mathcal{V}_d(\by_0) - M \| \by_0 \|_{V}^{d+1}
\leq J_d(\bar{\by},\bar{u}) - M \| \by_0 \|_{V}^{d+1} \\
& \qquad \leq \mathcal{V}(\by_0) \leq J(\by_d,u_{d})
\leq \mathcal{V}_d(\by_0) + M \| \by_0 \|_{V}^{d+1},
\end{align*}
which proves inequalities \eqref{eq:subOptimality1} and
\eqref{eq:subOptimality2}.
\end{proof}

\begin{theorem}\label{theo19}
Let $d \geq 2$. There exist $\delta_6 > 0$ and $M > 0$ such that for all $\by_{0} \in B_V(\delta_6)$, it holds that
\begin{align*}
\| \bar{\by}-\by_{d} \|_{W_{\infty}(\mD(A_\lambda),Y)} & \le M \| \by _{0} \|_{V}^{d}, \\
\| \bar{u}-u_{d} \|_{L^{2}(0,\infty;U)},& \le  M \| \by _{0} \|_{V}^{d},
\end{align*}
where $(\bar{\by},\bar{u})= (\mathcal{Y}(\by_0),\mathcal{U}(\by_0))$, $\by_{d}$ is the solution of the closed-loop system \eqref{eq:cls} with initial condition $\by_0$, and $u_d$ is the associated control.
\end{theorem}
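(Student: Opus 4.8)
\emph{Approach.} I would avoid a direct energy estimate and instead exploit the sensitivity analysis of Section~3. By Lemma~\ref{lemma:inverseMappingWeak} the optimality map $\Phi$ of \eqref{eq:defPhi} is, near the origin, a $C^\infty$-diffeomorphism onto a neighbourhood of $0$ in $X$, and by Proposition~\ref{proposition:UisOptimal} the optimal triple is $(\bar{\by},\bar u,\bar{\bp}):=(\mathcal{Y}(\by_0),\mathcal{U}(\by_0),\mathcal{P}(\by_0))=\Phi^{-1}(\by_0,0,0,0)$ for $\by_0\in B_V(\delta_4)$. The plan is to regard the feedback triple
\[
\big(\by_d,\ u_d,\ \bp_d\big),\qquad \bp_d:=D\mV_d(\by_d),\quad u_d=u_d(\by_d)=-\tfrac1\alpha B^{*}D\mV_d(\by_d),
\]
with $\by_d$ the closed-loop solution from Theorem~\ref{thm:cls_well_posed}, as an \emph{approximate} solution of $\Phi(\cdot)=(\by_0,0,0,0)$ and to control the defect $\Phi(\by_d,u_d,\bp_d)-(\by_0,0,0,0)$. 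Since $D\mV_d(\by)$ extends to an element of $\mathcal{L}(Y,\R)$ with norm $\le M\sum_k\|\by\|_V^{k-1}$, and since $\|\by_d\|_{W_\infty(\mD(A_\lambda),Y)}\le 2M_{\mathrm{cls}}\|\by_0\|_V$ while $W_\infty(\mD(A_\lambda),Y)\hookrightarrow L^\infty(0,\infty;V)\cap L^2(0,\infty;V)$, one first checks that for $\|\by_0\|_V$ small enough $\bp_d\in W_\infty(Y,[\mD(A_\lambda)]')$ and that $(\by_d,u_d,\bp_d)$ lies in the neighbourhood on which $\Phi^{-1}$ is well-defined and (being $C^\infty$) Lipschitz.

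\emph{Key step: identifying the defect.} The first and fourth components of $\Phi(\by_d,u_d,\bp_d)$ vanish by construction ($\by_d(0)=\by_0$, $\alpha u_d+B^{*}\bp_d=0$), and the second vanishes because $\by_d$ solves \eqref{eq:cls}. Only the third component,
\[
\boldsymbol{\rho}_d:=-\dot\bp_d-A'\bp_d-P\big((\by_d\cdot\nabla)\bp_d-(\nabla\by_d)^{T}\bp_d\big)-\by_d,
\]
is nonzero, and the heart of the proof is to show $\boldsymbol{\rho}_d=-Dr_d(\by_d)$ in $L^{2}(0,\infty;[\mD(A_\lambda)]')$. This is obtained by differentiating the perturbed Hamilton--Jacobi--Bellman identity \eqref{eq:resi_hjb1} of Proposition~\ref{prop:resi_hjb} in the state variable: taking the Fr\'echet derivative in a direction $\bv\in\mD(A)$ and then specialising $\by$ to $\by_d(t)$, one uses that $A\by_d-F(\by_d)-\tfrac1\alpha BB^{*}D\mV_d(\by_d)=\dot\by_d$ together with the chain rule $\tfrac{\dd}{\dd t}D\mV_d(\by_d)=D^{2}\mV_d(\by_d)\dot\by_d$ for multilinear forms (as in the proof of Lemma~\ref{lemma:optimalityJp}, admissible because each $\mT_k\in\mathcal{S}_k(V,V')$ accommodates one slot in $V'$ and $\dot\by_d\in L^2(0,\infty;Y)$). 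The resulting identity reads $\langle\dot\bp_d,\bv\rangle+\langle A'\bp_d,\bv\rangle+\langle P((\by_d\cdot\nabla)\bp_d-(\nabla\by_d)^{T}\bp_d),\bv\rangle+\langle\by_d,\bv\rangle_Y+Dr_d(\by_d)\bv=0$ for all such $\bv$, where the adjoint convection term comes from $-D\mV_d(\by_d)(DF(\by_d)\,\cdot\,)$ and the source $\by_d$ from the derivative of $\tfrac12\|\cdot\|_Y^2$; hence $\Phi(\by_d,u_d,\bp_d)=(\by_0,0,-Dr_d(\by_d),0)$.

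\emph{Conclusion.} By Proposition~\ref{prop:resi_hjb}, $r_d$ is a polynomial on $V$ all of whose monomials have degree at least $d+1$, so $Dr_d$ has monomials of degree at least $d$; estimating each factor through the forms in $\mathcal{S}_k(V,V')$ and through $F$ (Proposition~\ref{prop:N_estimates_2}) gives $\|Dr_d(\by)\|_{[\mD(A_\lambda)]'}\le M\|\by\|_V^{d}$ when $\|\by\|_V\le1$. Combined with $\|\by_d\|_{L^\infty(0,\infty;V)}\le M\|\by_0\|_V$ and $\|\by_d\|_{L^2(0,\infty;V)}\le M\|\by_0\|_V$ this yields $\|Dr_d(\by_d)\|_{L^{2}(0,\infty;[\mD(A_\lambda)]')}\le M\|\by_0\|_V^{d}$. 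Since $(\by_d,u_d,\bp_d)=\Phi^{-1}(\by_0,0,-Dr_d(\by_d),0)$ and $(\bar{\by},\bar u,\bar{\bp})=\Phi^{-1}(\by_0,0,0,0)$, Lipschitz continuity of $\Phi^{-1}$ on the relevant neighbourhood (Lemma~\ref{lemma:inverseMappingWeak}) gives
\[
\big\|(\by_d,u_d,\bp_d)-(\bar{\by},\bar u,\bar{\bp})\big\|_{W_\infty(\mD(A_\lambda),Y)\times L^{2}(0,\infty;U)\times W_\infty(Y,[\mD(A_\lambda)]')}\le M\,\|Dr_d(\by_d)\|_{L^{2}(0,\infty;[\mD(A_\lambda)]')}\le M\|\by_0\|_V^{d},
\]
and reading off the first two components yields the assertion (after possibly shrinking $\delta_6$).

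\emph{Main obstacle.} The delicate point is the second step: making the differentiation of the perturbed HJB equation rigorous as an identity in $L^{2}(0,\infty;[\mD(A_\lambda)]')$ along $\by_d$, and matching each term with the corresponding term of the costate equation built into $\Phi$ --- in particular justifying $\tfrac{\dd}{\dd t}D\mV_d(\by_d)=D^{2}\mV_d(\by_d)\dot\by_d$ in this low-regularity setting (using the chain rule of Lemma~\ref{lemma:optimalityJp} slotwise together with symmetry of the $\mT_k$), and verifying that $D\mV_d(\by_d)(DF(\by_d)\,\cdot\,)$ reproduces exactly $P((\by_d\cdot\nabla)\bp_d-(\nabla\by_d)^{T}\bp_d)$. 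A secondary technical check is the smallness of $\bp_d=D\mV_d(\by_d)$ in $W_\infty(Y,[\mD(A_\lambda)]')$, needed so that $(\by_d,u_d,\bp_d)$ lies in the domain of $\Phi^{-1}$; this follows a posteriori from the equation $\Phi(\by_d,u_d,\bp_d)=(\by_0,0,-Dr_d(\by_d),0)$, which controls $\dot\bp_d$.
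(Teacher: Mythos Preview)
Your approach is correct and genuinely different from the paper's. The paper does not use the diffeomorphism property of $\Phi$; instead it shows, via Theorem~\ref{thm:subOptimality} and uniqueness of Taylor expansions on $V$, that $D^{k}\mathcal{V}(0)=\mathcal{T}_k$, and hence obtains a Taylor expansion $D\mathcal{V}(\bar\by)=\sum_{k=2}^{d}\tfrac{1}{(k-1)!}\mathcal{T}_k(\cdot,\bar\by,\dots,\bar\by)+R_d(\bar\by)$ along the optimal trajectory. This turns the optimal feedback into $u_d(\bar\by)-\tfrac{1}{\alpha}B^{*}R_d(\bar\by)$, after which the error $\be=\bar\by-\by_d$ solves $\dot\be=A_\pi\be-F(\bar\by)+F(\by_d)+\sum_{k=3}^{d}(G_k(\bar\by)-G_k(\by_d))-\tfrac{1}{\alpha}BB^{*}R_d(\bar\by)$. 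The Lipschitz estimates of Corollary~\ref{cor:F_Lip} and Lemma~\ref{lem:Lipschitz_G} then yield $\|\be\|_{W_\infty}\le M\|\by_0\|_V^{d}$ directly from the state equation, and the control estimate is read off afterwards.

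Compared with this, your argument places the defect in the \emph{costate} component of the optimality map and reads it off as $Dr_d(\by_d)$ by differentiating the perturbed HJB identity~\eqref{eq:resi_hjb1}; the state and control estimates then come simultaneously from Lipschitz continuity of $\Phi^{-1}$. This buys you two things: you never need the identification $D^{k}\mathcal{V}(0)=\mathcal{T}_k$ (hence you do not rely on Theorem~\ref{thm:subOptimality}), and you get the costate error $\|\bar\bp-\bp_d\|_{W_\infty(Y,[\mD(A_\lambda)]')}\le M\|\by_0\|_V^{d}$ for free. The cost is the chain-rule step you flag and the need to check that $(\by_d,u_d,\bp_d)$ lies in the small ball where $\Phi^{-1}$ is defined; note also that Lemma~\ref{lemma:inverseMappingWeak} is stated only for right-hand sides $(\by_0,0,0,0)$, so you must appeal to the full inverse function theorem used in its proof to obtain $\Phi^{-1}$ on a neighbourhood of $0$ in $X$. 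A minor point: with your sign conventions the third component comes out as $+Dr_d(\by_d)$ rather than $-Dr_d(\by_d)$, which is immaterial for the estimate.
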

The proof  follows in part  the arguments provided in \cite{BreKP19} for the 2-D case. But  it also requires some changes  and thus it is given in the Appendix.

\section{Conclusions} An asymptotic expansion for the value function to an optimal control problem associated to the Navier-Stokes equation in dimension three was developed. The terms of the expansion are multilinear forms arising as the solutions to generalized Lyapunov equations. To achieve the desired approximation properties it is essential to consider the domains of these multilinear forms as an appropriate combination of $\bbH^1(\Omega)$ and $\bbL^2(\Omega)$ spaces. In future work the impact of the generalized Lyapunov equations for the numerical realization of feedback mechanisms is of interest. This will require an independent effort, however.

%\subsection{Estimates for the pressure}

\appendix

\section{Proofs}\label{sec:appendix_proofs}

\begin{proof}[Proof of Lemma \ref{lem:non_loc_sol}]
 %Since the semigroup $e^{A_s t}$ is exponentially stable on $Y$,
 From \cite[Theorem II.1.3.1]{Benetal07} it follows that for all $(\by_0,\bg)\in V\times L^2(0,\infty;Y)$ the system
 \begin{align*}
  \dot{\bz}=A_s \bz + \bg , \quad \bz(0)=\by_0
 \end{align*}
has a unique solution $\bz \in W_\infty(\mD(A_s),Y)$. Additionally, there exists a constant $M_s$ such that
\begin{align}\label{eq:reg_est_stabsys}
  \| \bz \|_{W_\infty(\mD(A_s),Y)} \le M_s (\| \by _0 \| _V + \| \bg \|_{L^2(0,\infty;Y)} ).
\end{align}
Let us without loss of generality assume that $M_s \ge \frac{1}{2C}$.
  %We claim that the constant $M_s$ is the one announced in the assertion. This will be shown by a fixed-point argument applied to the system \eqref{eq:loc_stab_non}.
  Similar to \cite[Lemma 5]{BreKP19b}, we are going to apply a fixed-point argument to the system \eqref{eq:non_loc_sol}.
  For this purpose, let us define
  $$\mathcal{M}=\left\{ \by \in W_\infty(\mD(A_s),Y) \ | \ \|\by\|_{W_\infty(\mD(A_s),Y)} \le 2 M_s \gamma \right\}$$
  as well as the mapping $\mathcal{Z}\colon \mathcal{M} \ni \by \mapsto \bz=\mathcal{Z}(\by)\in W_\infty(\mD(A_s),Y),$ where $\bz$ is the unique solution of
\begin{align*}
  \dot{\bz}=A_s\bz - F(\by) + \bff , \quad \bz(0)=\by_0.
\end{align*}
If there exists a fixed point of $\mathcal{Z}$, then it is a unique solution of \eqref{eq:non_loc_sol} in $\mathcal{M}$.
%From \eqref{eq:reg_est_stabsys} and Proposition \ref{prop:Lipschitz_F} with $\delta=2M_s\gamma\le\frac{1}{2CM_s}$, it follows that
With $C$ and $M_s$ given, we shall use Corollary \ref{cor:F_Lip} with $\delta=2M_s\gamma\le\frac{1}{2CM_s}\le 1$ and \eqref{eq:reg_est_stabsys} to obtain
\begin{align*}
  \| \bz \| _{W_\infty(\mD(A_s),Y)} &\le M_s ( \| F(\by) \|_{L^2(0,\infty;Y)} + \| \bff \|_{L^2(0,\infty;Y)} + \| \by_0 \| _V )  \\
 & \le M_s \left(\frac{1}{2M_s} \| \by \|_{W_\infty(\mD(A_s),Y)}+ \gamma \right) \le 2  M_s  \gamma.
\end{align*}
This implies $\mathcal{Z}(\mathcal{M})\subseteq \mathcal{M}$. For $\by_1,\by_2\in \mathcal{M}$ consider $\bz=\mathcal{Z}(\by_1)-\mathcal{Z}(\by_2)$ solving
\begin{align*}
  \dot{\bz}= A_s\bz - F(\by_1)+F(\by_2), \quad \bz(0)=0.
\end{align*}
Again by \eqref{eq:reg_est_stabsys} and Corollary \ref{cor:F_Lip}, it follows that
\begin{align*}
\| \mathcal{Z}(\by_1)-\mathcal{Z}(\by_2) \| _{W_\infty(\mD(A_s),Y)} &=  \| \bz \| _{W_\infty(\mD(A_s),Y)}\le M_s (\| F(\by_1)-F(\by_2) \|_{L^2(0,\infty;Y)} ) \\
 &\le M_s \delta C \| \by_1 - \by_2 \| _{W_\infty(\mD(A_s),Y)} \le \frac{1}{2} \| \by_1 -\by_2 \| _{W_\infty(\mD(A_s),Y)}.
\end{align*}
Hence, $\mathcal{Z}$ is a contraction in $\mathcal{M}$ and there exists a unique $\by \in \mathcal{M}$ with $\mathcal{Z}(\by)=\by$. Regarding uniqueness in $W_{\infty}(\mD(A_s),Y)$, consider two solutions $\by,\bz \in W_{\infty}(\mD(A_s),Y)$. The difference $\be :=\by-\bz$ then satisfies
\begin{align*}
	\dot{\be}=A_{s} \be  - F(\by)+F(\bz), \ \ \be(0)=0.
\end{align*}
Multiplication with $\be$ and subsequent integration yields
\begin{align*}
	\frac{1}{2}\frac{\mathrm{d}}{\mathrm{d}t}\| \be \|_{Y}^{2} = \langle A_{s} \be ,\be \rangle_{Y} - \langle F(\by)-F(\bz),\be \rangle_{V',V}.
\end{align*}
Note that $A_{s}$ satisfies an expression analogous to \eqref{eq:VY-coerc}. We thus have
\begin{align*}
	\frac{1}{2}\frac{\mathrm{d}}{\mathrm{d}t} \| \be \| _{Y}^{2} \le \alpha \| \be \|_{Y}^{2} - \beta \| \be \|_{V}^{2} + \| F(\by)-F(\bz)\|_{V'} \| \be \|_{V},
\end{align*}
with $\alpha \ge 0$ and $\beta >0 $.
From Proposition \ref{prop:N_estimates} and Young's inequality we conclude that
\begin{align*}
	\frac{1}{2}\frac{\mathrm{d}}{\mathrm{d}t} \| \be \| _{Y}^{2} &\le \alpha \| \be \|_{Y}^{2} - \beta \| \be \|_{V}^{2} + M\left( \| \be \|_{Y}\| \by \|_{\mathbb{H}^2(\Omega)}+ \| \be \|_{Y} \| \bz \|_{\mathbb{H}^2(\Omega)}  \right) \| \be \|_{V}\\
	&\le \alpha \| \be \|_{Y}^{2} - \beta \| \be \|_{V}^{2} + \frac{M}{\iota}\| \be \|^{2}_{V}  + \frac{M\iota}{2}\| \be \|_{Y}^2 \left(  \| \by \|_{\mathbb{H}^2(\Omega)}^2 + \| \bz \|_{\mathbb{H}^2(\Omega)}^2 \right) .
\end{align*}
Choosing $\iota$ large enough, this yields	
\begin{align*}
	\frac{1}{2}\frac{\mathrm{d}}{\mathrm{d}t} \| \be \| _{Y}^{2}
	&\le  \left( \alpha+ \frac{M\iota}{2} (\| \by \|^2_{\mathbb{H}^2(\Omega)}+\| \bz \|^2_{\mathbb{H}^2(\Omega)} ) \right)\| \be \| _{Y} ^2.
\end{align*}
Since $\by,\bz \in W_{\infty}(\mD(A_s),Y)$ and $\be(0)=0$ we can apply Gronwall's inequality and obtain that $\be(t)=0$ for all $t\ge 0$. This shows uniqueness of solutions in $W_{\infty}(\mD(A_s),Y)$.
\end{proof}

\begin{proof}[Proof of Lemma \ref{lemma:costateEqualDerivative}]
By continuity of the mapping $\mathcal{Y}$, there exists $\delta_5 \in (0,\delta_4]$ such that for all $\by_0 \in B_V(\delta_5)$, $\| \mathcal{Y}(\by_0) \|_{L^\infty(0,\infty;V)} \leq \delta_4$.

We now claim the following: for all $\by_0 \in B_V(\delta_5)$, we have $\bp(0)= D\mathcal{V}(\by_0)$, where $\bp= \mathcal{P}(\by_0)$. To verify this claim, let $\by_0$ and $\tilde{\by}_0 \in B_V(\delta_5),$ and set $(\by,u,\bp)= (\mathcal{Y}(\by_0),\mathcal{U}(\by_0),\mathcal{P}(\by_0))$ and $(\tilde{\by},\tilde{u})= (\mathcal{Y}(\tilde{\by}_0),\mathcal{U}(\tilde{\by}_0))$.  We have
\begin{align}
\mathcal{V}(\tilde{\by}_0)- \mathcal{V}({\by}_0)
= \ & \Big( \frac{1}{2} \| \tilde{\by} \|^2 + \frac{\alpha}{2} \| \tilde{u} \|^2 \Big) -
\Big( \frac{1}{2} \| {\by} \|^2 + \frac{\alpha}{2} \| {u} \|^2 \Big) \notag \\
& \qquad - \big\langle \bp, \dot{\tilde{\by}}-(A\tilde{\by}-F(\tilde{\by})+B\tilde{u}) \big\rangle_{L^2(0,\infty;Y)} \notag \\
& \qquad + \big\langle \bp, \dot{{\by}}- (A{\by} - F(\by)+B{u}) \big\rangle_{L^2(0,\infty;Y)}. \label{eq:diffValFunc}
\end{align}
Indeed, $u$ and $\tilde{u}$ are optimal and the last two terms vanish.
The following four relations can be easily verified:
\begin{equation} \label{eq:diffValFunc2}
\begin{aligned}
\frac{1}{2} \| \tilde{\by} \|^2 - \frac{1}{2} \|  {\by} \|^2
= \ & \langle {\by}, \tilde{\by}- {\by} \rangle_{L^2(0,\infty;Y)} + \frac{1}{2} \| \tilde{\by}-{\by}\|^2, \\
\frac{\alpha}{2} \| \tilde{u} \|^2 - \frac{\alpha}{2} \| {u} \|^2
= \ & \alpha \langle {u}, \tilde{u}- {u} \rangle_{L^2(0,\infty;U)} + \frac{\alpha}{2} \| \tilde{u} - {u} \|^2, \\
F(\tilde{\by})-F(\by)
= \ & F(\tilde{\by}-\by)+N(\tilde{\by}-\by,\by)+N(\by,\tilde{\by}-\by), \\
= \ & ((\tilde{\by}-\by)\cdot \nabla) (\tilde{\by}-\by) + ((\tilde{\by}-\by) \cdot \nabla) \by + (\by \cdot \nabla) (\tilde{\by}-\by) \\
-\langle \bp,\dot{\tilde{\by}} -\dot{\by} \rangle_{L^2(0,\infty;Y)}
= \ & \langle \bp(0),\tilde{\by}_0-{\by}(0) \rangle_{V',V} + \langle \dot{\bp}, \tilde{\by}- {\by} \rangle_{L^2(0,\infty;[\mD(A_\lambda)]'),L^2(0,\infty;\mD(A_\lambda))}.
\end{aligned}
\end{equation}
Combining \eqref{eq:diffValFunc} and \eqref{eq:diffValFunc2} yields
\begin{align*}
\mathcal{V}(\tilde{\by}_0)-\mathcal{V}(\by_0)
= \ & \langle \bp(0), \tilde{\by}(0)-{\by}(0) \rangle_{V',V} + \frac{1}{2} \| \tilde{\by}-{\by} \|_{Y}^2
+ \frac{\alpha}{2} \| \tilde{u}- {u} \|_{U}^2 \\
& \quad - \big\langle \bp, F(\tilde{\by}-{\by}) \big\rangle_{L^2(0,\infty;Y)}   + \big\langle \underbrace{\alpha {u} + B^{*}\bp}_{=0}, \tilde{u}- {u} \big\rangle_{L^2(0,\infty;U)} \\
& \quad + \big\langle   \underbrace{\dot{\bp} + A'\bp + \by +P((\by\cdot \nabla)\bp-(\nabla \by)^T \bp)}_{=0} , \tilde{\by}- {\by} \big\rangle_{L^2(0,\infty;[\mD(A_\lambda)]'    ,L^2(0,\infty;\mD(A_\lambda)))} .
\end{align*}
For $\tilde{\by}_0= {\by}_0 + \mathbf{h}$, we have $\| \tilde{\by} - \by \|_{W_\infty(\mD(A_\lambda),Y)} \leq M \| \mathbf{h} \|_V$ and $\| \tilde{u} - {u} \|_{L^2(0,\infty;U)} \leq M \| \mathbf{h} \|_V$, by the Lipschitz-continuity of the mappings $\mathcal{Y}$ and $\mathcal{U}$. It follows that the three quadratic terms in the above relation are of order $\| \mathbf{h} \|_V^2$ and thus that
\begin{align*}
& | \mathcal{V}(\tilde{\by}_0)- \mathcal{V}(\by_0)- \langle \bp(0), \tilde{\by}_0 - {\by}_0 \rangle_{V',V} | \\
& \qquad = \Big| \frac{1}{2} \| \tilde{\by}-\by \|_{Y}^2 +
\frac{\alpha}{2} \| \tilde{u}- {u} \|_{U}^2 -
\big\langle \bp, F(\tilde{\by}-\by) \big\rangle_{L^2(0,\infty;Y)} \Big| \leq M \| \mathbf{h} \|_V^2.
\end{align*}
This proves that $D\mathcal{V}(\by_0)= \bp(0),$ as announced.

Let $\by_0 \in B_V(\delta_5)$, set $(\by,u,\bp)= (\mathcal{Y}(\by_0),\mathcal{U}(\by_0),\mathcal{P}(\by_0))$ and choose $t \geq 0$. To verify \eqref{eq:costate_derValueFunc}, we define
\begin{equation*}
\tilde{\by}\colon s \geq 0 \mapsto \by(t+s), \quad
\tilde{u}\colon s \geq 0 \mapsto u(t+s), \quad
\tilde{\bp}\colon s \geq 0 \mapsto \bp(t+s).
\end{equation*}
By the dynamic programming principle, $\tilde{u}$ is the solution to problem \eqref{eq:NLQprob} with initial condition $\tilde{\by}(0)=\by(t)$. The associated trajectory and costate are $\tilde{\by}$ and $\tilde{\bp}$. Since $\| \by(t) \|_V \leq \delta_4$, we can use the above arguments to obtain that
$D\mathcal{V}(\tilde{\by}(0))= \tilde{\bp}(0)$
and finally that $D\mathcal{V}(\by(t))= \bp(t)$ in $V'$. Since $\bp\in L^2(0,\infty;Y)$, this equality also holds in $L^2(0,\infty;Y)$.
\end{proof}

\begin{proof}[Proof of Theorem \ref{theo19}]
 The main idea is to express the dynamics of the error $\be(t):=\bar{\by}(t)-\by_d(t)$ in feedback form by utilizing classical results on remainder terms for Taylor approximations. Let us detail the most important steps. First, for $\delta_6$ sufficiently small, from Corollary \ref{coro:diff} we know that $\mV$ is smooth and, hence, can be approximated by a Taylor series around $0$. From Theorem \ref{thm:subOptimality} it also follows that
\begin{align*}
| \mV(\by) - \sum_{k=2}^d \frac{1}{k!} \mT_k(\by,\dots,\by) | = o (\| \by \|_V^d), \quad \forall \by\in B_V(\delta_6).
\end{align*}
Since the $\mT_k$ are multilinear forms on $V^k$, the uniqueness of Taylor expansions implies that indeed $D^k\mV(0)= \mT_k$.
 %and Theorem \ref{thm:subOptimality} that $\mV_d$ constitutes a Taylor series expansion (in $B_V(\delta_6)$) of order $d$ of $\mV$ around $0$.
  Consequently, with \cite[Theorem 4A]{Zei86}, we also obtain a Taylor series expansion of $D\mV$ of the form
\begin{align*}
 D\mV(\by) = \sum_{k=2}^d \frac{1}{(k-1)!} \mT_k(\cdot,\by,\dots,\by) + R_d(\by), \quad \forall \by\in B_V(\delta_6)
\end{align*}
where the remainder term $R_d$ is given by
\begin{align*}
  R_d(\by)  =  \int_0^1 \frac{(1-\tau)^{d-1}}{(d-1)!} D^{d+1}\mV(\tau \by)(\cdot,\by,\dots,\by)\,\mathrm{d}\tau   .
\end{align*}
In particular, along the optimal trajectory $\bar{\by}(\cdot)=\mathcal{Y}(\by_0)(\cdot)$ it holds that
\begin{align}\label{eq:aux_taylor}
 \bp(t)=\mathcal{P}(\by_0)(t)=D\mV(\bar{\by}(t)) = \sum_{k=2}^d \frac{1}{(k-1)!} D^k \mV(0)(\cdot,\bar{\by}(t),\dots,\bar{\by}(t)) + R_d(\bar{\by}(t)).
\end{align}
By Proposition \ref{proposition:optiCondWeak} we know that $\bp \in L^2(0,\infty;Y)$. Similarly, since $\bar{\by}\in W_\infty(\mD(A),Y)$ from Theorem \ref{thm:existenceUniquenessLyapunov_2} it follows that
\begin{align*}
 \sum_{k=2}^{d} \frac{1}{(k-1)!} D^k \mV(0)(\cdot, \bar{\by}(\cdot),\dots, \bar{\by}(\cdot)) \in L^2(0,\infty;Y).
\end{align*}
Using \eqref{eq:aux_taylor} we then obtain $R_d(\bar{\by}(\cdot))\in L^2(0,\infty;Y)$. Consider now the error dynamics which satisfy
\begin{align*}
\dot{\be} = A_\pi \be - F(\bar{\by}) + F(\by_d)+ \sum_{k=3}^d (G_k(\bar{\by})- G_k(\by_d)) - \frac{1}{\alpha} BB^* R_d(\bar{\by}),
\end{align*}
with $G_k$ as in \eqref{eq:nonl_feed}. Defining a forcing term $\mathbf{f}$ via
\begin{align*}
 \mathbf{f}:=-F(\bar{\by})+F(\by_d)+ \sum_{k=3}^d (G_k(\bar{\by})-G_k(\by_d)) - \frac{1}{\alpha}BB^* R_d(\bar{\by})
\end{align*}
yields a system of the form $\dot{\be} = A_\pi \be +\mathbf{f}, \be(0)=0$. Moreover, for $\delta_6$ sufficiently small, we have the following estimate
\begin{align*}
 \| \mathbf{f} \|_{L^2(0,\infty;Y)} \le M (\tilde{\delta} \| \be \| _{W_\infty(\mD(A),Y)} + \| \by _0 \|_V^d),
\end{align*}
where the constant $M$ is independent of $\by_0$ and $\tilde{\delta}$ can be made arbitrarily small by reducing the value of ${\delta}_6$. This shows the first estimate. The estimate for the controls $\bar{u}$ and $u_d$ then follow exactly as in the proof of \cite[Theorem 22]{BreKP19}.
\end{proof}

\section{Linear optimality systems}\label{sec:lin_opt_sys}

Here, we analyze a class of linear optimality systems that arise in the proof of Lemma \ref{lemma:inverseMappingWeak}. With the space $X$ defined as in \eqref{eq:X_prod_space}, for a given $(\by_0,\bff,\bg,h)  \in X$, we consider:
\begin{equation*} \label{eq:LQprob} \tag{$LQ$}
\min_{\begin{subarray}{c} \by \in W_\infty(\mD(A_\lambda),Y) \\ u \in L^2(0,\infty;U) \end{subarray}}
%_{(\by,u) \in W_\infty(\mD(A),Y) \times L^2(0,\infty;U)}
 J[\bg,h](\by,u) \quad \text{subject to: }
e[\bff,\by_0](\by,u)= 0,
\end{equation*}
where $J	[\bg,h]\colon W_\infty(\mD(A_\lambda),Y) \times L^{2}(0,\infty;U) \to \mathbb R$ and $ e[\bff,\by_{0}]\colon W_\infty(\mD(A_\lambda),Y) \times L^{2}(0,\infty;U)\to L^{2}(0,\infty;Y)\times  Y$ are defined by
\begin{align*}
J[\bg,h](\by,u):= \ &
\frac{1}{2} \int_0^\infty \| \by \|_Y^2 \dd t
+ \langle \bg,\by \rangle_{L^2(0,\infty;[\mD(A_\lambda)]'),L^2(0,\infty;\mD(A_\lambda))}
+\frac{\alpha}{2} \int_0^\infty  \| u\|^2 _U \dd t + \langle h,u\rangle_{L^2(0,\infty;U)}  , \\
e[\bff,\by_0](\by,u):= \ & (\dot{\by} - (A\by + Bu + \bff),\by(0)-\by_0) \in L^2(0,\infty;Y)\times V.
\end{align*}

\begin{proposition} \label{prop:linOS}
For all $(\by_0,\bff,\bg,h) \in X$, there exists a unique triplet $(\by,u,\bp) \in W_\infty(\mD(A_\lambda),Y) \times L^2(0,\infty;U) \times W_\infty(Y,[\mD(A_\lambda)]')$ such that
\begin{equation} \label{eq:linOS}
\begin{cases}
\begin{array}{rll}
\dot{\by}-(A\by + Bu) = & \! \! \! \bff \qquad & \text{in $L^2(0,\infty;Y)$} \\
\by(0)=& \! \! \! \by_0 \qquad & \text{in $V$} \\
-\dot{\bp} - A' \bp - \by = & \! \! \! \bg & \text{in $L^2(0,\infty;[\mD(A_\lambda)]')$} \\
\alpha u + B^*  \bp = & \! \! \! - h & \text{in $L^2(0,\infty;U)$}.
\end{array}
\end{cases}
\end{equation}
Moreover there exists a constant $M>0$, independent of $(\bff,\bg,h,\by_0)$, such that
\begin{equation} \label{eq:linOSest}
\| (\by,u,\bp) \|_{W_\infty(\mD(A_\lambda),Y) \times L^2(0,\infty;U) \times W_\infty(Y,[\mD(A_\lambda)]')}
\leq M \| (\by_0,\bff,\bg,h) \|_X.
\end{equation}
\end{proposition}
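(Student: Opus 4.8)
The plan is to recognize \eqref{eq:linOS} as the first--order optimality system of the \emph{convex} linear--quadratic problem \eqref{eq:LQprob} and to proceed in three steps: first show that \eqref{eq:LQprob} admits a unique minimizer $(\bar{\by},\bar u)$; then derive the adjoint equation and the control law, producing a costate $\bp\in W_\infty(Y,[\mD(A_\lambda)]')$ and hence a solution of \eqref{eq:linOS}; and finally prove the a priori bound \eqref{eq:linOSest}, which applied with zero data also gives uniqueness. For the first step, feasibility comes from the stabilizing feedback $K$ of Assumption \ref{ass:A2}: by Consequence \ref{cons:C3} the equation $\dot{\by}^{f}=(A-BK)\by^{f}+\bff$, $\by^{f}(0)=\by_0$, has a unique solution $\by^{f}\in W_\infty(\mD(A_\lambda),Y)$ with $\|\by^{f}\|_{W_\infty(\mD(A_\lambda),Y)}+\|K\by^{f}\|_{L^2(0,\infty;U)}\le M(\|\by_0\|_V+\|\bff\|_{L^2(0,\infty;Y)})$ by \eqref{eq:reg_est_lin}, so $(\by^{f},-K\by^{f})$ is feasible and $J[\bg,h](\by^{f},-K\by^{f})<\infty$. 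For a minimizing sequence $(\by_n,u_n)$, the quadratic part of $J[\bg,h]$, the shift form $\dot{\by}_n=A_\lambda\by_n+(\lambda\by_n+Bu_n+\bff)$ of the state equation together with \eqref{eq:reg_est_shift}, and Young's inequality to absorb the linear terms $\langle\bg,\by_n\rangle$ and $\langle h,u_n\rangle$, yield a uniform bound in $W_\infty(\mD(A_\lambda),Y)\times L^2(0,\infty;U)$. Extracting a weak limit $(\by_n,u_n)\rightharpoonup(\bar{\by},\bar u)$, one passes to the limit in the \emph{linear} state equation directly (no compactness argument is needed) and uses weak lower semicontinuity of the convex continuous $J[\bg,h]$; the minimizer is unique since $J[\bg,h]$ is strictly convex on the affine feasible set ($u$ enters strictly convexly and determines $\by$).

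For the second step, the linearized constraint $(\bz,v)\mapsto(\dot{\bz}-A\bz-Bv,\bz(0))$ is onto $L^2(0,\infty;Y)\times V$: given $(\mathbf{r},\mathbf{s})$, solve $\dot{\bz}=(A-BK)\bz+\mathbf{r}$, $\bz(0)=\mathbf{s}$ (Consequence \ref{cons:C3}) and set $v=-K\bz$. The Lagrange multiplier rule then provides a unique $(\bp,\mu)\in L^2(0,\infty;Y)\times V'$ with $DJ[\bg,h](\bar{\by},\bar u)(\bz,v)=\langle\bp,\dot{\bz}-A\bz-Bv\rangle_{L^2(0,\infty;Y)}+\langle\mu,\bz(0)\rangle_{V',V}$ for all $(\bz,v)$. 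Taking $\bz=0$ gives $\alpha\bar u+B^{*}\bp=-h$; taking $\bz(0)=0$ and $v=0$ gives $\langle\bp,\dot{\bz}\rangle_{L^2(0,\infty;Y)}=\langle\bp,A\bz\rangle_{L^2(0,\infty;Y)}+\langle\bar{\by}+\bg,\bz\rangle$ for all $\bz\in W^0_\infty(\mD(A_\lambda),Y)$, whose right--hand side is bounded by $M(\|\bp\|_{L^2(0,\infty;Y)}+\|\bar{\by}\|_{L^2(0,\infty;Y)}+\|\bg\|_{L^2(0,\infty;[\mD(A_\lambda)]')})\,\|\bz\|_{L^2(0,\infty;\mD(A_\lambda))}$. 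Since $W^0_\infty(\mD(A_\lambda),Y)$ is dense in $L^2(0,\infty;\mD(A_\lambda))$, the argument from the proof of Proposition \ref{proposition:optiCondWeak} (see Remark \ref{rem:very_weak_eq}) shows that $\dot{\bp}$ extends to an element of $L^2(0,\infty;[\mD(A_\lambda)]')$; hence $\bp\in W_\infty(Y,[\mD(A_\lambda)]')$ and $-\dot{\bp}-A'\bp-\bar{\by}=\bg$ in $L^2(0,\infty;[\mD(A_\lambda)]')$, so $(\bar{\by},\bar u,\bp)$ solves \eqref{eq:linOS}.

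For the estimate, let $(\by,u,\bp)$ be \emph{any} solution of \eqref{eq:linOS}. Convexity of \eqref{eq:LQprob} makes the first--order conditions sufficient, so $(\by,u)$ is the minimizer; comparing $J[\bg,h](\by,u)\le J[\bg,h](\by^{f},-K\by^{f})$, bounding $\|\by\|_{L^2(0,\infty;\mD(A_\lambda))}\le\|\by\|_{W_\infty(\mD(A_\lambda),Y)}\le M(\|\by_0\|_V+\|\by\|_{L^2(0,\infty;Y)}+\|u\|_{L^2(0,\infty;U)}+\|\bff\|_{L^2(0,\infty;Y)})$ via the shift form, and absorbing the cross terms with Young's inequality give $\|\by\|_{W_\infty(\mD(A_\lambda),Y)}+\|u\|_{L^2(0,\infty;U)}\le M\|(\by_0,\bff,\bg,h)\|_X$ with $X$ as in \eqref{eq:X_prod_space}. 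To bound $\bp$, for $\mathbf{r}\in L^2(0,\infty;Y)$ pick $(\bz,v)$ with $\dot{\bz}-A\bz-Bv=\mathbf{r}$, $\bz(0)=0$, and $\|\bz\|_{W_\infty(\mD(A_\lambda),Y)}+\|v\|_{L^2(0,\infty;U)}\le M\|\mathbf{r}\|_{L^2(0,\infty;Y)}$ as above; the optimality identity yields $\langle\bp,\mathbf{r}\rangle_{L^2(0,\infty;Y)}=DJ[\bg,h](\by,u)(\bz,v)\le M\|(\by_0,\bff,\bg,h)\|_X\,\|\mathbf{r}\|_{L^2(0,\infty;Y)}$, hence $\|\bp\|_{L^2(0,\infty;Y)}\le M\|(\by_0,\bff,\bg,h)\|_X$; finally $\|\dot{\bp}\|_{L^2(0,\infty;[\mD(A_\lambda)]')}=\|A'\bp+\by+\bg\|_{L^2(0,\infty;[\mD(A_\lambda)]')}\le M\|(\by_0,\bff,\bg,h)\|_X$ using $A'\in\mathcal{L}(Y,[\mD(A_\lambda)]')$, so $\|\bp\|_{W_\infty(Y,[\mD(A_\lambda)]')}\le M\|(\by_0,\bff,\bg,h)\|_X$. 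This proves \eqref{eq:linOSest}; applied with $(\by_0,\bff,\bg,h)=0$ it also proves uniqueness.

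The main obstacle is the forward--backward coupling in \eqref{eq:linOS}: since $A$ need not generate a stable semigroup, neither the state equation nor the adjoint equation can be treated in isolation. The stabilizing feedback $K$ is what makes each step work --- feasibility, surjectivity of the linearized constraint, and the duality bound for $\bp$ --- while the convexity of \eqref{eq:LQprob} is what lets the a priori estimate be applied to \emph{every} solution of \eqref{eq:linOS}, thereby delivering uniqueness at the same time. A secondary, purely technical point is the very weak formulation needed to place $\bp$ in $W_\infty(Y,[\mD(A_\lambda)]')$, which is handled exactly as in Proposition \ref{proposition:optiCondWeak}.
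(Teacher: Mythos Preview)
Your proposal is correct and follows essentially the same route as the paper: both recognize \eqref{eq:linOS} as the optimality system of the convex linear--quadratic problem \eqref{eq:LQprob}, establish feasibility and surjectivity of the linearized constraint via the stabilizing feedback $K$ from Consequence \ref{cons:C3}, obtain the bounds on $(\by,u)$ by a minimizing-sequence argument combining Young's inequality with the shifted form \eqref{eq:reg_est_shift}, and then derive the costate equation and its regularity exactly as in Proposition \ref{proposition:optiCondWeak}. Your write-up is in fact more explicit than the paper's in two places---the duality bound on $\|\bp\|_{L^2(0,\infty;Y)}$ and the uniqueness argument via the a priori estimate with zero data---where the paper simply refers back to the proof of Proposition \ref{proposition:optiCondWeak}.
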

\begin{proof}
For finite horizon problems the proof would be standard. For the infinite horizon case the result cannot readily be obtained from the literature, and thus we decided to provide a proof here.

 We prove the assertion with the help of the following two auxiliary statements.

 \emph{Claim 1.}  There exists a constant $M>0$ such that for all $(\bff,\bg,h,\by_0) \in X$, the linear-quadratic problem \eqref{eq:LQprob} has a unique solution $(\by,u)$ satisfying the following bounds:
\begin{equation} \label{eq:est_sol_LQ}
\| \by \|_{W_\infty(\mD(A_\lambda),Y)} \leq M \| (\by_0,\bff,\bg,h) \|_X \quad \text{and} \quad
\| u \|_{L^2(0,\infty;U)} \leq M \| (\by_0,\bff,\bg,h) \|_X.
\end{equation}

\emph{Proof of Claim 1.} Due to Consequence \ref{cons:C3}, problem \eqref{eq:LQprob} is feasible. Let us now consider a minimizing sequence $(\by_n,u_n)_{n \in \mathbb{N}}$. We can assume that for all $n \in \mathbb{N}$,
\begin{align} \label{eq:Existence1}
J[\bg,h](\by_n,u_n) \leq M \| (\by_0,\bff,\bg,h) \|_X^2.
\end{align}
Let us show that the sequence $(\by_n,u_n)$ is bounded in $W_\infty(\mD(A_\lambda),Y) \times L^2(0,\infty;U)$. By Young's inequality, for all $\varepsilon >0$ it holds that
\begin{align*}
J[\bg,h](\by_n,u_n)& \ge \frac{1}{2} \| \by_n \|_{L^2(0,\infty;Y)}^2 - \| \bg \|_{L^2(0,\infty;[\mD(A_\lambda)]')} \| \by _n \|_{L^2(0,\infty;\mD(A_\lambda))} \\
&\qquad  + \frac{\alpha}{2} \| u_n \|^2_{L^2(0,\infty;U)} - \| h \| _{L^2(0,\infty;U)} \| u_n \| _{L^2(0,\infty;U)} \\
& \ge \frac{1}{2} \| \by_n \|_{L^2(0,\infty;Y)}^2 - \frac{1}{2\varepsilon} \| \bg \|_{L^2(0,\infty;[\mD(A_\lambda)]')}^2 - \frac{\varepsilon}{2} \| \by _n \|_{L^2(0,\infty;\mD(A_\lambda))}^2 \\
&\qquad  + \frac{\alpha}{2} \left( \| u_n \|^2_{L^2(0,\infty;U)} -  \frac{\| h\| _{L^2(0,\infty;U)}}{\alpha} \right)^2 - \frac{\| h \| _{L^2(0,\infty;U)}^2}{2\alpha} .
\end{align*}
Combining this estimate with \eqref{eq:Existence1}, we obtain that
\begin{equation}\label{eq:yu_intermed}
\begin{aligned}
 &\max( \| \by_n \| _{L^2(0,\infty;Y)},\| u_n \|_{L^2(0,\infty;U)} ) \\
 &\qquad \qquad \le M \left( \| (\by_0,\bff,\bg,h) \|_X  +\sqrt{\varepsilon} \| \by_n \| _{L^2(0,\infty;\mD(A_\lambda))} + \frac{1}{\sqrt{\varepsilon}} \| \bg \| _{L^2(0,\infty;[\mD(A_\lambda)]')} \right).
\end{aligned}
\end{equation}
Consequence \ref{cons:C2} now implies that
\begin{align*}
\| \by _n \| _{W_\infty(\mD(A_\lambda),Y)} &\le  M \left( \| \by_0 \|_V + \| \bff \|_{L^2(0,\infty;Y)} + \| \by_n\|_{L^2(0,\infty;Y)} \right) \\[1ex]
& \le M \left( \| (\by_0,\bff,\bg,h) \|_X  +\sqrt{\varepsilon} \| \by_n \| _{L^2(0,\infty;\mD(A_\lambda))} + \frac{1}{\sqrt{\varepsilon}} \| \bg \| _{L^2(0,\infty;[\mD(A_\lambda)]')} \right),
\end{align*}
for some constant $M$ independent of $(\by_0,\bff,\bg,h)$ and $\varepsilon$. Choosing $\varepsilon$ sufficiently small, this yields
\begin{align*}
\| \by _n \| _{W_\infty(\mD(A_\lambda),Y)} &\le  M \| (\by_0,\bff,\bg,h) \|_X .
\end{align*}
Utilizing \eqref{eq:yu_intermed} we further arrive at
\begin{align*}
\| u_n \| _{L^2(0,\infty;U)} \le M \| (\by_0,\bff,\bg,h) \| _X.
\end{align*}
It follows that the sequence $(\by_n,u_n)$ is bounded in $W_\infty(\mD(A_\lambda),Y) \times L^2(0,\infty;U)$ and has a weak limit point $(\by,u)$ satisfying \eqref{eq:est_sol_LQ}. One can prove the optimality of $(\by,u)$ with the same techniques as those used for the proof of \cite[Proposition 2]{BreKP19}. The uniqueness of the solution directly follows from the linearity of the state equation and the strict convexity in $u$ of the cost functional.

\emph{Claim 2.} For all $(\by_0,\bff,\bg,h) \in X$, there exists a unique costate $\bp \in W_{\infty}(Y,[\mD(A_\lambda)]')$ satisfying the following relations:
\begin{align} \label{eq:costate_LQ}
-\dot{\bp} - A' \bp - \by = \ & \bg \\
\label{eq:control_LQ}
\alpha u + B^* \bp  = \ & -h.
\end{align}
Here $(\by,u)$ denotes the unique solution to \eqref{eq:LQprob}. Moreover, there exists a constant $M>0$ independent of $(\by_0,\bff,\bg,h)$ such that $\| \bp \|_{W_{\infty}(Y,[\mD(A_\lambda)]')} \leq M \| (\by_0,\bff,\bg,h) \|_X$.

\emph{Proof of Claim 2.} The mapping $e[\bff,\by_0]$ is continuous and affine, and thus its Fr\'{e}chet derivative is given by
\begin{align*}
 &De\colon W_\infty(\mD(A_\lambda),Y) \times L^2(0,\infty;U) \to L^2(0,\infty;Y)\times V\\
 &De(\bz,v)=(\dot{\bz}-(A\bz+Bv),\bz(0)).
\end{align*}
Let us argue that $De$ is surjective: Let $  (\mathbf{r},\mathbf{s})\in L^2(0,\infty;Y) \times V$ and consider
\begin{align*}
  \dot{\bz}-(A\bz+Bv)&=\mathbf{r}, \ \   \bz(0)= \mathbf{s}.
\end{align*}
By Consequence \ref{cons:C3} there exists $\bw \in W_\infty(\mD(A_\lambda),Y)$ such that
  \begin{align*}
    \dot{\bw} = (A-BK)\bw + \mathbf{r}, \  \bw(0) = \mathbf{s}.
  \end{align*}
Setting $v=-K\bw\in L^2(0,\infty;U)$ we have solved $De(\bz,v)=(\mathbf{r},\mathbf{s})$. The remaining arguments are similar to those provided in the proof of Proposition \ref{proposition:optiCondWeak} and are thus omitted here.

\end{proof}

\bibliographystyle{siam}
\bibliography{references}

\subsection*{Acknowledgement}

This work was partly supported by the ERC advanced grant 668998 (OCLOC) under
the EU's H2020 research program.
\end{document}